\newcommand{\showcomments}{no}
\newsavebox{\commentbox}
\newtheorem{thm}{Theorem}[section]
\newtheorem{lem}[thm]{Lemma}
\newtheorem{cor}[thm]{Corollary}
\newtheorem{prop}[thm]{Proposition}
\newtheorem{thmi}{Theorem}
\newtheorem{questioni}[thmi]{Question}
\theoremstyle{definition}
\newtheorem{defn}[thm]{Definition}
\newtheorem{rem}[thm]{Remark}
\newtheorem{exmp}[thm]{Example}
\newtheorem{notation}[thm]{Notation}
\newtheorem{claim*}{Claim}
\DeclareMathOperator{\Aut}{Aut}
\DeclareMathOperator{\diam}{\textup{\textsf{diam}}}
\newcommand{\neb}{\mathcal N}
\newcommand{\Rmnum}[1]{\mathbf{{\expandafter\@slowromancap\romannumeral #1@}}}
\newcommand{\tup}[1]{\vec{#1}}
\let\oldmarginpar\marginpar
\renewcommand\marginpar[1]{\-\oldmarginpar[\raggedleft\footnotesize #1]{\raggedright\footnotesize #1}}
\newcommand{\tsh}[1]{\left\{\kern-.7ex\left\{#1\right\}\kern-.7ex\right\}}
\newcommand{\Tsh}[2]{\tsh{#2}_{#1}}
\newcommand{\ignore}[2]{\Tsh{#2}{#1}}
\newcommand{\co}{\colon}
\newcounter{enumitemp}
\newcommand{\dist}{\textup{\textsf{d}}}
\newcommand{\cuco}[1]{{\mathcal #1}}
\newcommand{\fontact}{{\mathcal C}}
\newcommand{\propnest}{\sqsubsetneq}
\newcommand{\nest}{\sqsubseteq}
\newcommand{\orth}{\bot}
\newcommand{\transverse}{\pitchfork}
\newcommand{\induced}{^{*}}
\newcommand{\inducedS}{^{\tiny{\diamondsuit}}}
\newcommand{\X}{\mathcal X}
\newcommand{\s}{\mathfrak S}
\newcommand{\W}{\mathfrak W}
\newcommand{\frakT}{\mathfrak T}
\newcommand{\frakS}{\mathfrak S}
\newcommand{\FU}[1]{ \mathbf F_{#1}}
\newcommand{\wideFU}[1]{ \widehat{\mathbf {F}}_{#1}}
\newcommand{\EU}[1]{ \mathbf E_{#1}}
\newcommand{\PU}[1]{ \mathbf P_{#1}}
\newcommand{\trans}{\pitchfork}
\begin{document}
\title[Acylindrical actions and Stability in HHG]{ 
Largest acylindrical actions and Stability\\ in hierarchically hyperbolic groups}

\author[C. Abbott]{Carolyn Abbott}
\address{Columbia University, New York, New York, USA}
\email{abbott@math.columbia.edu}
\author[J. Behrstock]{Jason Behrstock}
\address{Lehman College and The Graduate Center, CUNY, New York, New York, USA}
\email{jason.behrstock@lehman.cuny.edu}
\author[M. Durham]{Matthew Gentry Durham}
\address{University of California, Riverside, Riverside, California, USA}
\email{matthew.durham@ucr.edu}

\maketitle

\centerline{
\textit{\footnotesize{
With an appendix by DANIEL BERLYNE and JACOB RUSSELL}
}}

\date{\today}

\begin{abstract}
    We consider two manifestations of non-positive
    curvature: acylindrical actions (on hyperbolic spaces) and quasigeodesic stability.  We
    study these properties for the class of hierarchically hyperbolic
    groups, which is a general framework for simultaneously studying
    many important families of groups, including mapping class groups,
    right-angled Coxeter groups, most $3$--manifold groups,
    right-angled Artin groups, and many others.
    
    A group that admits an acylindrical action on a hyperbolic space
    may admit many such actions on different hyperbolic spaces.  It is
    natural to try to develop an understanding of all such actions and
    to search for a ``best'' one.  The set of all cobounded
    acylindrical actions on hyperbolic spaces admits a natural poset
    structure, and in this paper we prove that all hierarchically
    hyperbolic groups admit a unique action which is the largest in
    this poset.  The action we construct is also universal in the
    sense that every element which acts loxodromically in some
    acylindrical action on a hyperbolic space does so in this one.
    Special cases of this result are themselves new and interesting.
    For instance, this is the first proof that right-angled Coxeter
    groups admit universal acylindrical actions.
    
    The notion of quasigeodesic stability of subgroups provides a
    natural analogue of quasiconvexity which can be considered outside
    the context of hyperbolic groups.  In this paper, we provide a
    complete classification of stable subgroups of
    hierarchically hyperbolic groups, generalizing and extending
    results that are known in the context of mapping class groups
    and right-angled Artin groups.  Along the way, we provide a
    characterization of contracting quasigeodesics; interestingly, in
    this generality the proof is much simpler than in the 
    special cases where it was already known.
	
	In the appendix, it is verified that any space satisfying the 
	{\it a priori} weaker 
	property of being an ``almost hierarchically hyperbolic space'' 
	is actually a hierarchically hyperbolic space. The results of the 
	appendix are used to streamline the proofs in the main text.
\end{abstract}

\tableofcontents

\section{Introduction}
Hierarchically hyperbolic groups were recently introduced 
by Behrstock, Hagen, and Sisto \cite{BehrstockHagenSisto:HHS_I} to provide a uniform framework in which to  
study many important families of groups, 
including mapping class groups of finite type surfaces, right-angled Coxeter groups, most 
$3$--manifold groups, right-angled Artin groups and many others. 
A \emph{hierarchically hyperbolic space} consists of: a quasigeodesic space, 
$\cuco X$; a set of \emph{domains}, $\frak S$, which index a collection 
of $\delta$--hyperbolic spaces to which $\cuco X$ projects; and, 
some additional information about these projections, including, for instance, a 
partial order on the domains and a unique largest element in that 
order, 
which we denote by $S$ (i.e., $S$ is comparable to and larger than every other domain in $\frak S$).

\subsection*{Largest acylindrical actions}

The study of acylindrical actions on hyperbolic spaces, as 
initiated in its current form by Osin \cite{Osin:acyl} building on earlier work of Sela \cite{Sela:acylindrical} and Bowditch \cite{Bowditch:tight},  
has proven to be a powerful tool for studying groups with some 
aspects of non-positive curvature. As established in 
\cite{BehrstockHagenSisto:HHS_I}, non-virtually cyclic hierarchically hyperbolic groups  
admit non-elementary acylindrical actions when 
the $\delta$--hyperbolic space associated to the maximal element in 
$\frak S$ has infinite diameter, a property which holds in all the 
above examples except for those that are direct products.

Any given group with an acylindrical action may actually admit 
many acylindrical actions on many different spaces. A natural 
question is to try and find a ``best'' acylindrical action. 
There are different ways that one might try to optimize the 
acylindrical action. For instance, the notion of a \emph{universal 
acylindrical action}, for a given group $G$, is an acylindrical action  
on a hyperbolic space $X$ 
such that every element of $G$ which acts loxodromically in some acylindrical 
action on some hyperbolic space, must act loxodromically in its action on $X$. As established by Abbott, there exist finitely generated groups which 
admit acylindrical actions, but no universal acylindrical action 
\cite{Abbott:notuniversal}; we also note that universal actions need 
not be unique \cite{AbbottBalasubramanyaOsin:extensions}. 

In \cite{AbbottBalasubramanyaOsin:extensions}, Abbott, Balasubramanya, and Osin introduce a partial order
on cobounded acylindrical actions which, in a certain sense, encodes how much information the action provides about the group.  When there exists an element in this
partial ordering which is comparable to and larger than all other
elements it is called a \emph{largest} action.  By construction, any
largest action is necessarily a universal acylindrical action and unique.

In this paper we construct a largest action for every hierarchically hyperbolic group.  Special cases of this theorem recover some recent 
results of \cite{AbbottBalasubramanyaOsin:extensions}, as well as a 
number of new cases. For instance, 
in the case of  
right-angled Coxeter groups (and more generally for special cubulated groups), 
even the existence of a 
universal acylindrical action was unknown. 
Further, outside of the relatively 
hyperbolic setting, our result provides a single 
construction that simultaneously covers these new cases as well as  
all previously known largest and 
universal acylindrical actions of finitely presented groups. The following summarizes the main 
results of Section~\ref{sec:uaa}  (where there are also 
further details on the background and comparison with known results).

\begin{thmi} [HHG have actions that are largest and universal]\label{thmi:largestaction}
    Every hierarchically hyperbolic group 
    admits a largest acylindrical action.
    In particular, the following admit acylindrical actions 
    which are largest and universal:
    \begin{enumerate}[(1)]
    \item Hyperbolic groups.
    \item Mapping class groups of orientable surfaces of finite type. 

    \item Fundamental groups of compact three-manifolds with no Nil or Sol in
    their prime decomposition.
    \item Groups that act properly and
    cocompactly on a special CAT(0) cube complex, and more generally any 
cubical group which admits a factor system.  This includes right
angled-Artin groups, right-angled Coxeter groups, and many other 
examples as in \cite{HagenSusse:HHScubical}.
    \end{enumerate} 
\end{thmi}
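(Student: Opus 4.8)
The plan is to identify, inside the hierarchically hyperbolic structure $(\cuco X, \frakS)$ for a hierarchically hyperbolic group $G$, a single domain-like object whose associated hyperbolic space carries the desired action, and then to verify the two required properties: that the action is acylindrical, and that it dominates every other cobounded acylindrical action of $G$ in the Abbott--Balasubramanya--Osin poset. Since a largest action is automatically universal and unique, it suffices to produce an action that is acylindrical, cobounded, hyperbolic, and maximal. The natural candidate is built from the maximal domain $S\in\frakS$ together with a coned-off version of $\cuco X$: one expects to cone off, in $\cuco X$, all the standard product regions (equivalently, the subsets with bounded projection to $\calC S$), obtaining a hyperbolic space $\coneoff{\cuco X}$ on which $G$ acts; the projection $\cuco X \to \calC S$ factors (coarsely) through this coned-off space, and conversely the geometry of $\coneoff{\cuco X}$ is controlled by $\calC S$ together with the ``obvious'' loxodromics coming from domains orthogonal to nothing. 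The first step is therefore to make this construction precise and to record that $G$ acts coboundedly on it.

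Second, I would establish acylindricity of the $G$-action on this space. Here one leans on the hierarchically hyperbolic axioms — in particular the distance formula, bounded geodesic image, and the large links axiom — to show that any element $g\in G$ fixing (coarsely) two far-apart points must have most of its translation length absorbed by a single domain; acylindricity of the action on $\calC S$ (which holds by \cite{BehrstockHagenSisto:HHS_I} when $\calC S$ has infinite diameter, and is trivial otherwise) together with a pigeonhole/consistency argument over the finitely many $G$-orbits of domains then bounds the coarse stabilizer of a pair of points. This is essentially a relative-acylindricity argument of the type used for mapping class groups acting on the curve complex, promoted to the HHG setting.

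Third, and this is where I expect the main difficulty, I would prove maximality in the poset: given any cobounded acylindrical action of $G$ on a hyperbolic space $Y$, I must produce a coarsely $G$-equivariant Lipschitz map $\coneoff{\cuco X}\to Y$ (up to the equivalence used to define the poset). The key input is a classification of loxodromic elements: an element $g\in G$ acts loxodromically in \emph{some} acylindrical action on a hyperbolic space if and only if $g$ acts loxodromically on $\calC S$, equivalently $g$ has a quasigeodesic axis in $\cuco X$ whose shadow in $\calC S$ is unbounded — this should follow from the machinery surrounding the contracting/stability characterization promised in the abstract, since a loxodromic-everywhere element must have a Morse (contracting) axis and Morse geodesics in an HHG are detected by having unbounded projection to the maximal hyperbolic factor. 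Granting this, any acylindrical action on $Y$ can only ``see'' these elements, so the partial order comparison reduces to comparing $\coneoff{\cuco X}$ with $Y$ via the common set of loxodromics, and one invokes the criterion from \cite{AbbottBalasubramanyaOsin:extensions} for when one acylindrical action dominates another (it suffices that every loxodromic for $Y$ is loxodromic for $\coneoff{\cuco X}$, which holds by the classification).

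Finally, the enumerated special cases (1)--(4) are immediate corollaries: each listed class is known to be hierarchically hyperbolic — hyperbolic groups and mapping class groups and the relevant $3$-manifold groups by \cite{BehrstockHagenSisto:HHS_I}, and groups acting properly cocompactly on proper CAT(0) cube complexes (including RAAGs and RACGs) by the cubical hierarchical-hyperbolicity results cited there — so Theorem~\ref{thmi:largestaction} applies verbatim. The only point meriting a remark is that for direct products the maximal hyperbolic space has bounded diameter, in which case the largest action is the trivial action on a point, consistent with the statement.
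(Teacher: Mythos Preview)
Your construction of the candidate space and the acylindricity step are fine in spirit (the paper in fact builds a modified HHG structure with unbounded products and takes the new top-level hyperbolic space $\mathcal T_S$; acylindricity is then immediate from \cite[Theorem~K]{BehrstockHagenSisto:HHS_I}, so no direct argument is needed). The genuine gap is in your maximality step. You write that ``it suffices that every loxodromic for $Y$ is loxodromic for $\coneoff{\cuco X}$'' and attribute this to a criterion in \cite{AbbottBalasubramanyaOsin:extensions}. No such criterion exists: what you have described is precisely the condition that the action be \emph{universal}, and universal actions need not be largest (they need not even be unique, as the paper notes in the introduction). Knowing that the loxodromics of $Y$ are a subset of the loxodromics of $\coneoff{\cuco X}$ does not produce a coarsely equivariant Lipschitz map $\coneoff{\cuco X}\to Y$, which is what domination in the poset requires.

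The paper's argument for maximality is quite different and does not pass through a classification of loxodromics at all. It uses the following criterion from \cite{AbbottBalasubramanyaOsin:extensions}: if $G$ is generated by a finite set $F$ together with subgroups $H_1,\dots,H_n$, the resulting Cayley graph is hyperbolic with acylindrical $G$-action, and each $H_i$ is \emph{elliptic in every acylindrical action of $G$ on a hyperbolic space}, then this action is largest. The paper realizes $\mathcal T_S$ as such a Cayley graph by taking the $H_i$ to be stabilizers of the $\FU{U_i}$ for finitely many orbit representatives $U_i$ of $\nest$-maximal non-top domains. The key point---and this is what your proposal is missing---is that each $H_i$ lives inside a genuine direct product $\PU{U_i}=\FU{U_i}\times\EU{U_i}$ with both factors unbounded (this is exactly why one first passes to the structure with unbounded products), so $H_i$ contains no Morse elements and hence acts elliptically in every acylindrical action. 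Maximality then follows from a Milnor--Schwarz comparison: any cobounded acylindrical action has a generating set containing all the $H_i$, hence is dominated by $F\cup\bigcup H_i$. Your loxodromic-classification argument would recover universality (and indeed the paper gives essentially that argument in Remark~\ref{rem:universal}), but not largeness.
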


We use this construction of a largest action to characterize stable subgroups (Theorem \ref{thmi:stab})  and contracting elements (Corollary \ref{cor:contracting}) of hierarchically hyperbolic groups, and to describe random subgroups of hierarchically hyperbolic groups (Theorem \ref{thmi:random}).

\subsection*{Stability in hierarchically hyperbolic groups}

One of the key features of a Gromov hyperbolic space is that every
geodesic is uniformly \emph{Morse}, a property also known as 
\emph{(quasigeodesic)  
stability}; that is, any uniform
quasigeodesic beginning and ending on a geodesic must lie uniformly
close to it.  In fact, any geodesic metric space in which each
geodesic is uniformly Morse is hyperbolic.

In the context of geodesic metric spaces, the presence of Morse 
geodesics has important structural consequences for the space; for
instance, any asymptotic cone of such a space has global cut points
\cite{DrutuMozesSapir:Divergence}.  Moreover, quasigeodesic stability in groups is
quite prevalent, since any finitely generated acylindrically
hyperbolic group contains Morse geodesics \cite{Osin:acyl,
Sisto:qconvex}.

There has been much interest in developing alternative
characterizations \cite{DrutuMozesSapir:Divergence, charneysultan,
ACGH, aougab2016middle} and understanding this phenomenon in various
important contexts \cite{Minsky:quasiproj, Behrstock:asymptotic,
DrutuMozesSapir:Divergence, DurhamTaylor:stable, aougab2016middle}.
This includes the theory of Morse boundaries, which encode all
Morse geodesics of a group \cite{charneysultan, cordes2015morse,
cordes2016stability, cordes2016boundary, cashen2017metrizable}.
In \cite{DurhamTaylor:stable}, Durham and Taylor generalized the notion of stability to subspaces and subgroups.

In this paper, we obtain a complete characterization of stability in hierarchically
hyperbolic groups.

Let $(\cuco X, \mathfrak S)$ be an HHS.  We say that a subset $\cuco Y\subset \cuco X$ has 
\emph{$D$--bounded projections} when $\diam_{\fontact U}(\pi_{U}(\cuco Y))<D$ for
all non-maximal $U\in\frak S$; when the constant does not matter, we 
simply say the subset has \emph{uniformly bounded projections}.

\begin{thmi}[Equivalent conditions for subgroup stability]\label{thmi:stab}
    Any hierarchically hyperbolic group $G$ admits a hierarchically hyperbolic group structure 
    $(G,\frak S)$ such that for any finitely generated  $H <G$, the following are equivalent:

    \begin{enumerate}
    \item\label{thmi:stab:enum1} $H$ is stable in $G$;
    \item\label{thmi:stab:enum2} $H$ is undistorted in $G$ and has uniformly bounded projections;
    \item\label{thmi:stab:enum3} Any orbit map $H \rightarrow 
    \fontact S$ is a quasi-isometric embedding, where $S$ is the 
    $\nest$--maximal element in $\frak S$.
    \end{enumerate}
\end{thmi}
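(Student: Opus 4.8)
The plan is to prove the cycle of implications $(\ref{thmi:stab:enum1})\Rightarrow(\ref{thmi:stab:enum2})\Rightarrow(\ref{thmi:stab:enum3})\Rightarrow(\ref{thmi:stab:enum1})$, after first fixing the HHG structure $(G,\frak S)$. The choice of structure matters only for the implication $(\ref{thmi:stab:enum2})\Rightarrow(\ref{thmi:stab:enum3})$ and for the interaction between the two notions of ``uniformly bounded projections'' (the one internal to the HHS axioms versus the one measured in the orbit map to $\cuco S$); I would invoke a normalization of the HHG structure — the ``clean'' or ``normalized'' structures discussed earlier — so that domains behave well with respect to the group action and so that $\cuco S$ is the top-level hyperbolic space on which $G$ acts acylindrically. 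With that fixed, $(\ref{thmi:stab:enum1})\Rightarrow(\ref{thmi:stab:enum2})$ is the easy direction: stability of $H$ in $G$ forces $H$ to be undistorted (a stable embedding is in particular a quasi-isometric embedding, since one can build a uniform quasigeodesic in $H$ between any two points and then apply the Morse property in $G$), and the bounded-projections claim follows because if $\pi_U(H\cdot x_0)$ had large diameter for some non-maximal $U$, one could produce a quasigeodesic of $G$ with endpoints on the orbit of $H$ that travels deep into the product region associated to $U$, contradicting the Morse property — the standard mechanism by which large projections obstruct stability.

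The core of the argument is the implication $(\ref{thmi:stab:enum2})\Rightarrow(\ref{thmi:stab:enum3})$. Here I would use the distance formula for hierarchically hyperbolic spaces: for the orbit $H\cdot x_0$, undistortion says the word metric on $H$ is comparable to the ambient HHS distance, and the HHS distance is a sum (with thresholds) of the projection distances $\dist_U$ over all domains $U\in\frak S$. The hypothesis of uniformly bounded projections kills every term with $U$ non-maximal once the threshold is chosen above $D$, so the only surviving term is $\dist_S$, the projection to the $\nest$--maximal domain. This forces the orbit map $H\to\cuco S$ to be a quasi-isometric embedding. The converse $(\ref{thmi:stab:enum3})\Rightarrow(\ref{thmi:stab:enum2})$ runs the same way: a quasi-isometric orbit embedding into $\cuco S$ already gives undistortion via the distance formula (the $\cuco S$ term alone is bounded below by the word length), and it also forces the projections to all other domains to be uniformly bounded, since otherwise the distance formula would make $\dist_G$ grow faster than $\dist_S$, contradicting that both are comparable to $|h|_H$.

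Finally, $(\ref{thmi:stab:enum3})\Rightarrow(\ref{thmi:stab:enum1})$ is where the ``characterization of contracting quasigeodesics'' advertised in the abstract does the work. Given that $H\to\cuco S$ is a quasi-isometric embedding and $\cuco S$ is hyperbolic, any geodesic in $H$ maps to a uniform quasigeodesic in $\cuco S$, which is uniformly Morse there; I would promote this to a contraction property for the corresponding quasigeodesic in $G$ — a quasigeodesic with uniformly bounded projections to all non-maximal domains is contracting, hence Morse — using the HHS machinery (gate maps onto standard product regions, the realization theorem, and the fact that a nearest-point projection in $G$ is controlled by the projections to the domains). This gives that $H$ is stable in $G$. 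The main obstacle I anticipate is the promotion step in $(\ref{thmi:stab:enum3})\Rightarrow(\ref{thmi:stab:enum1})$: one must show that controlling all the coordinate projections of a quasigeodesic really does yield the global Morse/contracting property in the ambient space, uniformly, and this requires care with the hierarchy paths and the bounded geodesic image behavior of the $\pi_U$ — although, as the abstract notes, in the general HHS setting this turns out to be cleaner than in the previously known special cases, precisely because the distance formula and the consistency/realization axioms package the needed estimates directly.
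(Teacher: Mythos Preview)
Your overall strategy matches the paper's: reduce everything to the characterization of contracting quasigeodesics (Theorem~\ref{thm:char of contracting}) plus the distance formula. The paper organizes this as $(\ref{thmi:stab:enum1})\Leftrightarrow(\ref{thmi:stab:enum2})$ via Corollary~\ref{cor:char of stab 1} and $(\ref{thmi:stab:enum2})\Leftrightarrow(\ref{thmi:stab:enum3})$ via the distance formula, but your cycle packages the same content.

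There is, however, a genuine gap in your identification of \emph{which} structural modification is needed and \emph{where} it enters. The required property is not a vague ``normalization'' but specifically that $(G,\frak S)$ has \emph{unbounded products}: every non--$\nest$--maximal $U$ with $\diam(\FU U)=\infty$ also has $\diam(\EU U)=\infty$. This is produced by Theorem~\ref{thm:betterHHSstructure} (or its almost-HHS analogue in Section~\ref{sec:almost HHS}), and you should name it. Crucially, it is needed for $(\ref{thmi:stab:enum1})\Rightarrow(\ref{thmi:stab:enum2})$, not for $(\ref{thmi:stab:enum2})\Rightarrow(\ref{thmi:stab:enum3})$ as you claim. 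Your argument that ``large $\pi_U(H)$ lets one build a competing quasigeodesic through $\mathbf P_U$, violating Morse'' only works if $\mathbf P_U$ is a \emph{genuine} product with both factors unbounded; otherwise $\mathbf P_U$ is coarsely $\FU U$ and a stable quasigeodesic can have arbitrarily large projection there without contradiction. By contrast, $(\ref{thmi:stab:enum2})\Rightarrow(\ref{thmi:stab:enum3})$ is pure distance formula and holds in any HHS structure.

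A smaller point: your $(\ref{thmi:stab:enum3})\Rightarrow(\ref{thmi:stab:enum2})$ via ``otherwise the distance formula would make $\dist_G$ grow faster than $\dist_S$'' is not quite right as stated --- the distance formula only yields that non-maximal projections are bounded \emph{linearly} in $\dist_S$, not uniformly. One needs the consistency and bounded geodesic image axioms (or to route through $(\ref{thmi:stab:enum1})$) to upgrade this to a uniform bound. This does not break your cycle, since $(\ref{thmi:stab:enum3})\Rightarrow$ undistorted is easy and then the bounded-projections claim can be recovered, but the mechanism is more than just the distance formula.
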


Theorem~\ref{thmi:stab} generalizes some previously known results.
In the case of mapping class groups: \cite{DurhamTaylor:stable} proved 
equivalence of (\ref{thmi:stab:enum1}) and (\ref{thmi:stab:enum3});
equivalence of (\ref{thmi:stab:enum2}) and (\ref{thmi:stab:enum3}) 
follows from the distance formula; moreover, \cite{KentLeininger:shadows,
Hamenstadt:ccc} yield that these conditions are also equivalent to
convex cocompactness in the sense of \cite{FarbMosher}.  The case of
right-angled Artin groups was studied in \cite{koberda2014geometry},
where they prove equivalence of (\ref{thmi:stab:enum1}) and
(\ref{thmi:stab:enum3}).

Section \ref{sec:stability} contains a more general version of 
Theorem~\ref{thmi:stab}, as well
as further applications, including Theorem \ref{thm:stab asdim}, 
which concerns the Morse
boundary of hierarchically hyperbolic groups and proves that all hierarchically hyperbolic groups have finite stable asymptotic dimension. 

\subsection*{On purely loxodromic subgroups}

In the mapping class group setting \cite{BBKL} proved that the 
conditions in Theorem \ref{thmi:stab} are also equivalent to being undistorted and purely 
pseudo-Anosov. Similarly, in the right-angled Artin group setting,  
it was proven in \cite{koberda2014geometry} that  (\ref{thmi:stab:enum1}) and
(\ref{thmi:stab:enum3}) are each equivalent to being purely loxodromic.

Subgroups of right-angled Coxeter groups all of whose elements act
loxodromically on the contact graph were studied in the recent
preprint \cite[Theorem~1.4]{Tran:loxcox}, who proved that property is 
equivalent to (\ref{thmi:stab:enum3}).  Since there often exist
elements in a right-angled Coxeter group which do not act
loxodromically on the contact graph, his condition is not equivalent 
to (\ref{thmi:stab:enum1}); it is the ability to change the
hierarchically hyperbolic structure as we do in
Theorem~\ref{thm:betterHHSstructure}, discussed below, which allows us to prove our more
general result which characterizes \emph{all} stable subgroups, not 
just the ones acting loxodromically on the contact graph.

Mapping class groups and right-angled Artin groups have the property 
that in their standard hierarchically hyperbolic structure they admit a 
universal acylindrical action on $\fontact S$, where $\fontact S$ is the hyperbolic space associated to the  
$\nest$--maximal domain $S$. On the other hand, right-angled Coxeter 
groups often don't admit universal acylindrical actions on $\fontact 
S$ in their standard structure. Accordingly, we believe the following 
questions are interesting. The first item would generalize the 
situation in the mapping class group as established in \cite{BBKL}, and the second item for right-angled Artin 
groups would generalize results proven in \cite{koberda2014geometry}, and  
for right-angled Coxeter groups would generalize results in \cite{Tran:loxcox}.  If the second item is true for the mapping class group, this would resolve a 
question of Farb--Mosher \cite{FarbMosher}. See also \cite[Question 1]{aougab2016middle}.

    \begin{questioni}\label{conji:equivalence} Let $(G,\frak S)$ be a
    hierarchically hyperbolic group which admits a universal
    acylindrical action on $\fontact S$, where $S$ is the $\nest$--maximal element in $\frak S$. 
    Let $H$ be a finitely generated subgroup of $G$. 
    \begin{itemize}
        \item Are the conditions in Theorem~\ref{thmi:stab} also 
    equivalent to:  $H$ is undistorted and acts purely
    loxodromically on $\fontact S$?
    
     \item Under what hypotheses on $(G,\frak S)$, are the conditions
     in Theorem~\ref{thmi:stab} also equivalent to: $H$ acts
     purely loxodromically on $\fontact S$?
    \end{itemize}
\end{questioni}

Note that in the context of Question~\ref{conji:equivalence}, an
element acts loxodromically on $\fontact S$ if and only if it has
positive translation length.  This holds since the action is
acylindrical and thus each element either acts elliptically or
loxodromically.

In an early version of this paper, we asked if the second part of
Question~\ref{conji:equivalence} held for all hierarchically
hyperbolic groups.  In the general hierarchically hyperbolic setting,
however, the undistorted hypothesis is necessary, as pointed out to us
by Anthony Genevois with the following example.  The necessity is
shown by Brady's example of a torsion-free hyperbolic group with a
finitely presented subgroup which is not hyperbolic
\cite{Brady:branchedcoverings}.  This subgroup is torsion-free and
thus purely loxodromic.  But, a subgroup of a hyperbolic group
is stable if and only if it is quasiconvex. Thus, since this subgroup is
not quasiconvex, we see that 
being purely loxodromic is strictly weaker than the
conditions of Theorem~\ref{thmi:stab}.

\subsection*{New hierarchically hyperbolic structures}

In order to establish the above results, we provide some new 
structural theorems about hierarchically hyperbolic spaces.

One of the key technical innovations in this paper is provided in 
Section~\ref{sec:structures}. There we prove 
Theorem~\ref{thm:betterHHSstructure} which allows us to modify  
a given hierarchically hyperbolic structure $(\cuco X,\frakS)$ by 
removing $\fontact U$ for some $U\in\frakS$ and, in their 
place, enlarging the space $\fontact S$.  
For instance, this is how 
we construct the space on which a hierarchically hyperbolic group has 
its largest acylindrical action.

\medskip

Another important tool is Theorem~\ref{thm:char of contracting} which 
provides a simple characterization 
of contracting geodesics in a hierarchically hyperbolic space.
  
The following is a restatement of that result in the case of groups: 

\begin{thmi} [Characterization of 
    contracting quasigeodesics] \label{thmi:char of contracting}
    Let $G$ be a hierarchically hyperbolic group.  For
    any $D>0$ and $K\geq 1$ there exists a $D'>0$ depending only on $D$ and $G$ 
    such that the following holds for every 
    $(K,K)$--quasigeodesic $\gamma \subset \cuco X$:   
    the quasigeodesic $\gamma$ 
    has $D$--bounded projections if and only if $\gamma$ is
    $D'$--contracting.
\end{thmi}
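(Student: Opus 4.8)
The plan is to prove the equivalence via the hierarchically hyperbolic machinery, exploiting the fact that contracting quasigeodesics and quasigeodesics with bounded projections are both controlled by their shadows in the hyperbolic spaces $\fontact U$. First I would recall that in a hierarchically hyperbolic space, a quasigeodesic $\gamma$ has a well-defined (coarse) shadow $\pi_U(\gamma)$ in each $\fontact U$, and that the behavior of $\gamma$ is governed by the distance formula: $\dist_{\cuco X}(x,y) \asymp \sum_{U\in\frakS} [\dist_{\fontact U}(\pi_U x, \pi_U y)]_{\sigma}$. The point is that ``$D$--bounded projections'' says all the non-maximal terms in this sum are uniformly bounded along $\gamma$, so that up to additive error $\gamma$ is ``quasi-isometrically seen'' in $\fontact S$; and one direction of contracting should follow from the fact that $\pi_S(\gamma)$ is then a quasigeodesic in the hyperbolic space $\fontact S$, hence contracting there, and this pulls back to a contracting property in $\cuco X$ via the Lipschitz projection $\pi_S$ together with a ``retraction'' onto $\gamma$ built from gate maps / hierarchy paths.

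\smallskip

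\noindent\emph{(Bounded projections $\Rightarrow$ contracting.)} Given $x\in\cuco X$, I would define the closest-point projection of $x$ to $\gamma$ and compare it with the composition: project $x$ to $\fontact S$, take the closest point on the quasigeodesic $\pi_S(\gamma)$ in the $\delta$--hyperbolic space $\fontact S$, then lift back to $\gamma$. The hyperbolicity of $\fontact S$ gives a contracting constant for $\pi_S(\gamma)$ depending only on $\delta$ and $K$. To promote this to a statement in $\cuco X$ I need: (i) that $\pi_S\colon\cuco X\to\fontact S$ is coarsely Lipschitz (axiom of HHS); (ii) that on a ball disjoint from a neighborhood of $\gamma$, the nearest-point projection to $\gamma$ in $\cuco X$ agrees, up to uniformly bounded error, with the lifted hyperbolic projection—this is where the $D$--bounded projection hypothesis is used, via the distance formula, since the only term that can ``see'' movement along $\gamma$ is the $S$--term. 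Assembling these, the diameter of the $\cuco X$--projection of such a ball to $\gamma$ is controlled by the diameter of the $\fontact S$--projection of its image, which is bounded by the contracting constant of $\pi_S(\gamma)$. This yields a $D'$ depending only on $D$, $K$, and the HHS constants of $G$.

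\smallskip

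\noindent\emph{(Contracting $\Rightarrow$ bounded projections.)} For the converse, suppose $\gamma$ is $D'$--contracting but has a projection to some non-maximal $U\in\frakS$ of large diameter. I would use the standard HHS construction of ``standard product regions'' $P_U\cong \fontact{F_U}\times\fontact{E_U}$ associated to $U$: if $\pi_U(\gamma)$ is long, then $\gamma$ must fellow-travel a hierarchy path that passes deep into $P_U$, and inside such a product region one can find a ``square'' — a coarsely isometrically embedded flat — which provides, for a point far from $\gamma$ in the $\fontact{E_U}$--direction, a quasigeodesic from $\gamma$ back to $\gamma$ whose endpoints on $\gamma$ are far apart, contradicting the contracting property. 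Concretely, points in $P_U$ that share the same $U$--coordinate as two far-apart points of $\gamma\cap P_U$ but differ in the orthogonal ($\fontact{E_U}$) coordinate all project to (coarsely) the same point of $\fontact U$, hence are within bounded $U$--distance yet the corresponding gate points on $\gamma$ are separated; drifting in the $\fontact{E_U}$ factor produces the violating quasigeodesic. So if $\gamma$ is $D'$--contracting then $\diam(\pi_U(\gamma))$ is bounded by a function of $D'$ and the HHS constants; inverting gives the desired $D$.

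\smallskip

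\noindent\textbf{Main obstacle.} The delicate step is the first implication, specifically point (ii): showing that, under the $D$--bounded projection hypothesis, the genuine nearest-point projection in $\cuco X$ onto $\gamma$ is coarsely computed by the $\fontact S$--shadow. One must handle the interplay between the infinitely many (bounded but nonzero) non-maximal terms in the distance formula and the fact that contracting is a statement about \emph{all} balls disjoint from $\gamma$, including very large ones; the threshold $\sigma$ in the distance formula and the choice of contracting constant must be coordinated. I expect this to go through because ``bounded projections'' is exactly the condition that makes $\gamma$ behave, coarsely, like its image in the single hyperbolic factor $\fontact S$, but making the constants uniform—depending only on $D$, $K$, and $G$—requires care with the HHS axioms (realization, consistency, and the bounded geodesic image property) rather than any single clever estimate. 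Notably, as remarked in the introduction, this argument is cleaner here than in the previously-known special cases precisely because the HHS axioms package the needed projection estimates abstractly.
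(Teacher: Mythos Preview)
Your forward direction (bounded projections $\Rightarrow$ contracting) is close to the paper's, though the paper uses the gate map $\mathfrak g\colon\cuco X\to\gamma$ directly rather than comparing with a lifted $\fontact S$--nearest-point projection. The gate map automatically satisfies parts (1) and (2) of contracting, and for part (3) the paper argues via the distance formula plus the bounded geodesic image axiom that $\dist_S(\mathfrak g(x),\mathfrak g(y))$ is bounded; this sidesteps your ``main obstacle'' (ii) entirely, since the gate map is already the right retraction. Your version is more work but should go through.

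The genuine gap is in your converse. You take a non-maximal $U$ with $\diam(\pi_U(\gamma))$ large and then ``drift in the $\EU U$--direction'' inside $\PU U$ to find a flat and contradict contracting. But nothing in the HHS axioms guarantees that $\EU U$ is unbounded. In many natural HHG structures it is not: in the paper's pentagon right-angled Coxeter example, each $\EU{U_v}$ is a copy of $\mathbb Z/2\mathbb Z$, yet a geodesic can have arbitrarily large projection to $\fontact U_v$. Since that group is hyperbolic, every geodesic is contracting---so ``contracting $\Rightarrow$ $D$--bounded projections'' is simply \emph{false} in the original structure. Your argument cannot be repaired by choosing a different $U$, because the failure is intrinsic to the structure, not to your estimates.

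The paper's fix is exactly this: before attempting the converse, replace $(\cuco X,\mathfrak S)$ by a new (almost) HHS structure $(\cuco X,\mathfrak T)$ with \emph{unbounded products}, meaning every non-maximal $U\in\mathfrak T$ with $\diam(\FU U)=\infty$ also has $\diam(\EU U)=\infty$. This is the content of Theorem~\ref{thm:betterHHSstructure} (under a clean-containers hypothesis) and Section~\ref{sec:almost HHS} (in general). Once this is in hand, your product-region argument works essentially as you wrote it; but the construction of the new structure is the real content of the converse, and you have not addressed it. Note too that the theorem as stated is implicitly about ``bounded projections'' with respect to this modified structure---that is where your $D'$ will come from.
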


Since the presence of a contracting geodesic implies the 
group has at least quadratic divergence, 
an immediate consequence of Theorem~\ref{thmi:char of contracting} is 
that any hierarchically hyperbolic group has quadratic divergence 
whenever $\cuco X$ projects to an infinite diameter subset of 
$\fontact S$.

As a sample application of Theorem \ref{thmi:char of contracting}
and using work of Taylor--Tiozzo \cite{TaylorTiozzo:randomqi}, we
prove the following in Section~\ref{subsec:random} as Theorem \ref{thm:random stable}.

\begin{thmi}[Random subgroups are stable]\label{thmi:random}
    Let $(\cuco X,\frakS)$ be an HHS for which   
    $\fontact S$ has infinite diameter, where $S$ is the  
    $\nest$--maximal element, 
    and consider 
    $G<\Aut(\cuco X, \mathfrak S)$ which acts properly and cocompactly 
    on~$\cuco X$. Then any $k$--generated random subgroup of $G$ 
    stably embeds in $\cuco X$ via the orbit map.
\end{thmi}

We note that one immediate consequence of this result 
is a new proof 
of a theorem of 
Maher--Sisto: 
any random subgroup of a hierarchically 
hyperbolic group which is not the direct product of two infinite 
groups is stable \cite{maher2017random}. The 
 mapping class group and right-angled Artin groups cases of this 
 result were first
 established in \cite{TaylorTiozzo:randomqi}.

Finally, at the end of the paper we discuss a technical condition on
hierarchically hyperbolic structures, called having \emph{clean
containers}.  While in Proposition~\ref{prop:cleancontainers} this
hypothesis is shown to hold for many groups, it does not hold in all
cases.  This condition was used in earlier versions of this paper in
which it was assumed for the proof of
Theorem~\ref{thm:betterHHSstructure}, and then the general result was
bootstrapped from there.  In light of
Theorem~\ref{thm:almost_HHS_are_HHS} in the Appendix, this property is
no longer required for this paper.  We keep the contents
of this section in the paper nonetheless, since they have found 
independent interest and already been used elsewhere, e.g.,
\cite{BerlaiRobbio:combinationHHG,
HagenSusse:HHScubical,Russell:relHHS}, as well as in several papers 
in progress.

\subsection*{Acknowledgments} The authors thank Mark Hagen and
Alessandro Sisto for lively conversations about hierarchical
hyperbolicity.  The authors were supported in part by NSF grant
DMS-1440140 while at the Mathematical Sciences Research Institute in
Berkeley during Fall 2016 program in Geometric Group Theory.  Abbott
was supported by the NSF RTG award DMS-1502553 and NSF award 
DMS-1803368, Behrstock was supported by NSF award DMS-1710890, and Durham was
supported by NSF RTG award DMS-1045119 and NSF award DMS-1906487.  Behrstock thanks Chris Leininger for an
interesting conversation which led to the formulation of
Question~\ref{conji:equivalence}.  We thank Daniel Berlyne, Ivan Levcovitz, Jacob
Russell, Davide Spriano, and the anonymous referee for helpful 
feedback. We 
thank Anthony Genevois for resolving a question asked in an early 
version of this article.

\section{Background}

We begin by recalling some preliminary notions about metric spaces, maps between them, and group actions.  Given metric spaces $X,Y$, we use $d_X,d_Y$ to denote the distance functions in $X,Y$, respectively.   A map $f\colon X \to Y$ is \emph{$K$--Lipschitz} if there exists a constant $K\geq 1$ such that for every $x,y\in X$, $d_X(x,y)\leq Kd_Y(g(x),g(y))$; it is \emph{$(K,C)$--coarsely Lipschitz} if $d_X(x,y)\leq Kd_X(x,y)+C$.  The map $f$ is a \emph{$(K,C)$--quasi-isometric embedding} if there exist constants $K\geq 1$ and $C\geq 0$ such that for all $x,y\in X$, \[\frac1Kd_X(x,y)-C\leq d_Y(f(x),f(y))\leq Kd_X(x,y)+C.\]  If, in addition, $Y$ is contained in the $C$--neighborhood of $f(X)$, then $f$ is a \emph{$(K,C)$--quasi-isometry}.  For any interval $I\subseteq \mathbb R$, the image of an isometric embedding $I\to X$ is a \emph{geodesic} and the image of a $(K,C)$--quasi-isometric embedding $I\to X$ is a \emph{$(K,C)$--quasigeodesic}.  

If any two points in $X$ can be connected by a $(K,C)$--quasigeodesic, then we say $X$ is a \emph{$(K,C)$--quasigeodesic space}.  If $K=C$, we may simply say that $X$ is a \emph{$K$--quasigeodesic space}.  A subspace $Z\subseteq X$ is \emph{$K$--quasi-convex} if there exists a constant $K\geq 0$ such that  any geodesic in $X$ connecting points in $Z$ is contained in the $K$--neighborhood of $Z$.  For all of the above notions, if the particular constants $K,C$ are not important, we may drop them and simply say, for example, that a map is a quasi-isometry.

Throughout this paper, we will assume that all group actions are by isometries.  The action of a group $G$ on a metric space $X$, which we denote by $G\curvearrowright X$, is \emph{proper} if for every bounded subset $B\subseteq X$, the set $\{g\in G\mid gB\cap B\neq \emptyset\}$ is finite.  The action is  \emph{cobounded} (respectively, \emph{cocompact}) if there exists a bounded (respectively, compact) subset $B\subseteq X$ such that $X=\cup_{g\in G} gB$.  If a group $G$ acts on metric spaces $X$ and $Y$, we say a map $f\colon X\to Y$ is \emph{$G$--equivariant} if for every $x\in X$,  $f(gx)=gf(x)$.  A \emph{quasi-action} of $G$ on $X$ associates to each $g\in G$ a quasi-isometry $A_g\colon x\to gx$ of $X$ with uniform quasi-isometry constants, such that $A_g\circ A_h$ is within uniformly bounded distance of $A_{gh}$.

\subsection{Hierarchically hyperbolic spaces}\label{subsec:defHHS}

In this section we recall the basic definitions and 
properties of 
hierarchically hyperbolic spaces as introduced in  
\cite{BehrstockHagenSisto:HHS_I, BehrstockHagenSisto:HHS_II}.

\begin{defn}[Hierarchically hyperbolic space]\label{defn:HHS}
A $q$--quasigeodesic space  $(\cuco X,\dist_{\cuco X})$ is said to be  
\emph{hierarchically hyperbolic} if there exists 
$\delta\geq0$, an index set $\mathfrak S$, and a set $\{\fontact W \mid W\in\mathfrak S\}$ of $\delta$--hyperbolic spaces $(\fontact U,\dist_U)$,  such that the following conditions are satisfied: \begin{enumerate}
\item\textbf{(Projections.)}\label{item:dfs_curve_complexes} There is
a set $\{\pi_W\co \cuco X\rightarrow2^{\fontact W}\mid W\in\mathfrak S\}$
of \emph{projections} sending points in $\cuco X$ to sets of diameter
bounded by some $\xi\geq0$ in the various $\fontact W\in\mathfrak S$.
Moreover, there exists $K$ so that each $\pi_W$ is $(K,K)$--coarsely
Lipschitz and $\pi_W(\cuco X)$ is $K$--quasiconvex in $\fontact
W$.

 \item \textbf{(Nesting.)} \label{item:dfs_nesting} $\mathfrak S$ is
 equipped with a partial order $\nest$, and either $\mathfrak
 S=\emptyset$ or $\mathfrak S$ contains a unique $\nest$--maximal
 element which is larger than all other elements; when $V\nest W$, we say $V$ is \emph{nested} in $W$.
 For each
 $W\in\mathfrak S$, we denote by $\mathfrak S_W$ the set of
 $V\in\mathfrak S$ such that $V\nest W$.  Moreover, for all $V,W\in\mathfrak S$
 with $V\propnest W$ there is a specified subset
 $\rho^V_W\subset\fontact W$ with $\diam_{\fontact W}(\rho^V_W)\leq\xi$.
 There is also a \emph{projection} $\rho^W_V\colon \fontact
 W\rightarrow 2^{\fontact V}$. 
 
 \item \textbf{(Orthogonality.)} 
 \label{item:dfs_orthogonal} $\mathfrak S$ has a symmetric and
 anti-reflexive relation called \emph{orthogonality}: we write $V\orth
 W$ when $V,W$ are orthogonal.  Also, whenever $V\nest W$ and $W\orth
 U$, we require that $V\orth U$.  Finally, we require that for each
 $T\in\mathfrak S$ and each $U\in\mathfrak S_T$ for which
 $\{V\in\mathfrak S_T\mid V\orth U\}\neq\emptyset$, there exists $W\in
 \mathfrak S_T-\{T\}$, so that whenever $V\orth U$ and $V\nest T$, we
 have $V\nest W$; we say $W$ is a \emph{container} associated with $T\in\mathfrak S$ and $U\in\mathfrak S_T$.  Finally, if $V\orth W$, then $V,W$ are not
 $\nest$--comparable.
 
 \item \textbf{(Transversality and consistency.)}
 \label{item:dfs_transversal} If $V,W\in\mathfrak S$ are not
 orthogonal and neither is nested in the other, then we say $V,W$ are
 \emph{transverse}, denoted $V\transverse W$.  There exists
 $\kappa_0\geq 0$ such that if $V\transverse W$, then there are
  sets $\rho^V_W\subseteq\fontact W$ and
 $\rho^W_V\subseteq\fontact V$ each of diameter at most $\xi$ and 
 satisfying: $$\min\left\{\dist_{
 W}(\pi_W(x),\rho^V_W),\dist_{
 V}(\pi_V(x),\rho^W_V)\right\}\leq\kappa_0$$ for all $x\in\cuco X$.
 
 For $V,W\in\mathfrak S$ satisfying $V\nest W$ and for all
 $x\in\cuco X$, we have: $$\min\left\{\dist_{
 W}(\pi_W(x),\rho^V_W),\diam_{\fontact
 V}(\pi_V(x)\cup\rho^W_V(\pi_W(x)))\right\}\leq\kappa_0.$$ 
  
 Finally, if $U\nest V$, then $\dist_W(\rho^U_W,\rho^V_W)\leq\kappa_0$ whenever $W\in\mathfrak S$ satisfies either $V\propnest W$ or $V\transverse W$ and $W\not\orth U$.
 
 \item \textbf{(Finite complexity.)} \label{item:dfs_complexity} There exists $n\geq0$, the \emph{complexity} of $\cuco X$ (with respect to $\mathfrak S$), so that any set of pairwise--$\nest$--comparable elements has cardinality at most $n$.
  
 \item \textbf{(Large links.)} \label{item:dfs_large_link_lemma} There
exist $\lambda\geq1$ and $E\geq\max\{\xi,\kappa_0\}$ such that the following holds.
Let $W\in\mathfrak S$ and let $x,x'\in\cuco X$.  Let
$N=\lambda\dist_{_W}(\pi_W(x),\pi_W(x'))+\lambda$.  Then there exists $\{T_i\}_{i=1,\dots,\lfloor
N\rfloor}\subseteq\mathfrak S_W-\{W\}$ such that for all $T\in\mathfrak
S_W-\{W\}$, either $T\in\mathfrak S_{T_i}$ for some $i$, or $\dist_{
T}(\pi_T(x),\pi_T(x'))<E$.  Also, $\dist_{
W}(\pi_W(x),\rho^{T_i}_W)\leq N$ for each $i$.

 \item \textbf{(Bounded geodesic image.)}
 \label{item:dfs:bounded_geodesic_image} For all $W\in\mathfrak S$,
 all $V\in\mathfrak S_W-\{W\}$, and all geodesics $\gamma$ of
 $\fontact W$, either $\diam_{\fontact V}(\rho^W_V(\gamma))\leq E$ or
 $\gamma\cap\neb_E(\rho^V_W)\neq\emptyset$.
 
 \item \textbf{(Partial Realization.)} \label{item:dfs_partial_realization} There exists a constant $\alpha$ with the following property. Let $\{V_j\}$ be a family of pairwise orthogonal elements of $\mathfrak S$, and let $p_j\in \pi_{V_j}(\cuco X)\subseteq \fontact V_j$. Then there exists $x\in \cuco X$ so that:
 \begin{itemize}
 \item $\dist_{V_j}(x,p_j)\leq \alpha$ for all $j$,
 \item for each $j$ and 
 each $V\in\mathfrak S$ with $V_j\propnest V$, we have 
 $\dist_{V}(x,\rho^{V_j}_V)\leq\alpha$, and
 \item if $W\transverse V_j$ for some $j$, then $\dist_W(x,\rho^{V_j}_W)\leq\alpha$.
 \end{itemize}

\item\textbf{(Uniqueness.)} For each $\kappa\geq 0$, there exists
$\theta_u=\theta_u(\kappa)$ such that if $x,y\in\cuco X$ and
$\dist(x,y)\geq\theta_u$, then there exists $V\in\mathfrak S$ such
that $\dist_V(x,y)\geq \kappa$.\label{item:dfs_uniqueness}
\end{enumerate}
\end{defn}

\begin{notation}
Note that below we will often abuse notation by simply writing $(\cuco X, \mathfrak S)$ or $\mathfrak S$ to 
refer to the entire package of an hierarchically hyperbolic 
structure, including all the associated spaces, projections, and 
relations given by the above definition.
\end{notation}

\begin{notation}\label{notation:suppress_pi}
When writing distances in $\fontact U$ for some $U\in\mathfrak S$, we 
often simplify the notation slightly by suppressing the projection
map $\pi_U$, i.e., given $x,y\in\cuco X$ and
$p\in\fontact U$  we write
$\dist_U(x,y)$ for $\dist_U(\pi_U(x),\pi_U(y))$ and $\dist_U(x,p)$ for
$\dist_U(\pi_U(x),p)$. Note that when we measure distance between a 
pair of sets (typically both of bounded diameter) we are taking the minimum distance 
between the two sets. For distance/diameter, if 
the space in which the measurement is being made is not clear from 
the context, we will denote it by a subscript.
Given $A\subset \cuco X$ and $U\in\mathfrak S$ 
we let $\pi_{U}(A)$ denote $\cup_{a\in A}\pi_{U}(a)$.
\end{notation}

\begin{rem}
In the setting of hierarchically hyperbolic spaces, we often encounter
maps which are well-defined only up to uniformly bounded error, in the
following sense.  Given a map $f\colon X\to Y$ between quasi-geodesic
spaces $X,Y$, there may be multiple possible points in $Y$ that one
could define as $f(x)$ for a particular $x\in X$.  If the diameter of
such possible points $f(x)$ is uniformly bounded in $Y$ over all $x\in
X$, then we say that the
map is \emph{coarsely well-defined}, since we could 
arbitrarily make a choice for each $f(x)$ and the map would be 
well-defined up to uniformly bounded error.  For example, $\rho^U_V$ gives a
coarsely well-defined map $\fontact U\to\fontact V$.
\end{rem}

An important consequence of being a hierarchically hyperbolic space is the following distance formula, which relates distances in $\X$ to distances in the hyperbolic spaces $\fontact U$ for $U\in\s$.  The notation $\ignore{x}{s}$ means include $x$ in the 
sum if and 
only if $x>s$.

\begin{thm}[Distance formula for HHS;  
    \cite{BehrstockHagenSisto:HHS_II}]\label{thm:distance_formula}
Let $(\cuco X, \mathfrak S)$ be a hierarchically hyperbolic space.  Then
there exists $s_0$ such that for all $s\geq s_0$, there exist $C,K$ so
that for all $x,y\in\cuco X$,
$$\dist(x,y)\asymp_{K,C}\sum_{U\in\mathfrak
S}\ignore{\dist_U(x,y)}{s}.$$
\end{thm}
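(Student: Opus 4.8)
The plan is to prove the two inequalities packaged into $\asymp_{K,C}$ separately, each by induction on the complexity $n$ of $(\cuco X,\mathfrak S)$, following the template of the Masur--Minsky distance formula for mapping class groups; throughout I would fix the threshold $s$ larger than $\xi$, $\kappa_0$, $E$, and the coarse-Lipschitz constant of the projections. For the lower bound $\sum_{U\in\mathfrak S}\ignore{\dist_U(x,y)}{s}\le K\dist(x,y)+C$ there are two ingredients: first, each term is controlled because $\pi_U$ is $(K,K)$--coarsely Lipschitz, so $\dist_U(x,y)\le K\dist(x,y)+K$; second, the non-maximal domains are controlled in aggregate. For the latter I would run the induction directly on the sum: applying the Large Links axiom to the $\nest$--maximal domain $S$ with $N=\lambda\dist_S(x,y)+\lambda$ yields $T_1,\dots,T_{\lfloor N\rfloor}\in\mathfrak S_S\setminus\{S\}$ such that every non-maximal $T$ with $\dist_T(x,y)\ge E$ --- hence every $T$ contributing to the sum, since $s\ge E$ --- is nested in some $T_i$, so $\sum_{U\ne S}\ignore{\dist_U(x,y)}{s}\le\sum_{i}\sum_{V\nest T_i}\ignore{\dist_V(x,y)}{s}$. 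By Finite Complexity each $\mathfrak S_{T_i}$ has complexity $<n$, so the inductive hypothesis bounds each inner sum by the length of a path between the gates of $x$ and $y$ on the standard product region for $T_i$; what keeps the outer sum linear in $\dist(x,y)$ rather than quadratic is that, by Bounded Geodesic Image, a domain $V\nest T_i$ can accumulate $(x,y)$--projection distance only while a fixed geodesic of $\fontact S$ from $\pi_S(x)$ to $\pi_S(y)$ is near $\rho^{T_i}_S$, and the $\rho^{T_i}_S$ occur in order along that geodesic, so the corresponding sub-paths are essentially pairwise disjoint.

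The upper bound $\dist(x,y)\le K\sum_{U}\ignore{\dist_U(x,y)}{s}+C$ is the substantive direction: it requires building a path in $\cuco X$ from $x$ to $y$ whose length is comparable to the sum, and I would do this by induction on complexity, using as an auxiliary input the Realization Theorem for hierarchically hyperbolic spaces (every consistent tuple is coarsely realized by a point of $\cuco X$, coarsely uniquely). Taking a geodesic $\sigma$ of $\fontact S$ from $\pi_S(x)$ to $\pi_S(y)$ and, via Large Links, domains $T_1,\dots,T_N$ with the $\rho^{T_i}_S$ in order along $\sigma$ and capturing every non-maximal domain with large $(x,y)$--projection, I would subdivide $\sigma$ at the $\rho^{T_i}_S$ and use Realization to produce points $x=w_0,w_1,\dots,w_N=y$, where $w_j$ realizes the tuple that agrees with $y$ on all domains nested in $T_1,\dots,T_j$, agrees with $x$ on all domains nested in $T_{j+1},\dots,T_N$, and tracks $\sigma$ in $\fontact S$; verifying this tuple is consistent uses the Consistency and Bounded Geodesic Image axioms. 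Consecutive points $w_{j-1},w_j$ then differ only in $\fontact S$ (along a subsegment of $\sigma$) and in domains nested in $T_j$, so by induction $\dist(w_{j-1},w_j)\lesssim\dist_S(\rho^{T_{j-1}}_S,\rho^{T_j}_S)+\sum_{V\nest T_j}\ignore{\dist_V(x,y)}{s}$; summing over $j$, the $\fontact S$--terms telescope into $\dist_S(x,y)$ and the rest recovers $\sum_{U\ne S}\ignore{\dist_U(x,y)}{s}$, since each non-maximal domain lies below some $T_j$, giving $\dist(x,y)\le\sum_j\dist(w_{j-1},w_j)\lesssim\sum_U\ignore{\dist_U(x,y)}{s}$.

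The hard part will be the upper bound, and specifically everything folded into the Realization Theorem together with the verification that the interpolating tuples defining the $w_j$ are genuinely consistent; this is where Orthogonality, the container axiom, and Partial Realization do their work, and it is where a self-contained argument would be longest. Two further technical points need attention: $s$ must be chosen large enough that the ``small'' projection distances discarded by the Large Links machinery and by the inductive calls contribute only a bounded total (so that passing from $\dist_S(x,y)$ to $\ignore{\dist_S(x,y)}{s}$, and likewise in the recursion, is harmless), and one must check that a domain nested below several of the $T_i$ at once is overcounted by at most a multiplicative constant depending on $n$.
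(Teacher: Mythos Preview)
This theorem is not proved in the paper at all: it is quoted as a background result from \cite{BehrstockHagenSisto:HHS_II}, so there is no ``paper's own proof'' to compare against.  The paper only gestures at the argument once, in Section~\ref{sec:almost HHS}, where it observes that the proof in \cite[Theorem~4.5]{BehrstockHagenSisto:HHS_II} goes through verbatim for almost hierarchically hyperbolic spaces because the container clause of the orthogonality axiom is used only to bound the cardinality of pairwise-orthogonal families.

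Your sketch is a reasonable outline of the strategy actually used in \cite{BehrstockHagenSisto:HHS_II}: the lower bound via Large Links and induction on complexity, and the upper bound by constructing an efficient path using Realization and an ordered family of relevant domains.  The structure in the cited paper is organised somewhat differently --- hierarchy paths are built first, and the distance formula is then read off from their existence together with Realization and Uniqueness --- but the ingredients you list (Large Links, Bounded Geodesic Image to control overlaps, Realization to manufacture the interpolating points, induction on complexity) are the right ones.  One point worth flagging: your overcounting remark for domains nested under several $T_i$ is handled in \cite{BehrstockHagenSisto:HHS_II} not by a direct multiplicity bound but via a colouring/ordering argument for relevant domains; and your lower-bound argument as written still needs to explain why the inductive bound on $\sum_{V\nest T_i}\ignore{\dist_V(x,y)}{s}$ in terms of distances between gates on $\PU{T_i}$ sums to something linear in $\dist(x,y)$ rather than in $\dist_S(x,y)$ times the complexity.
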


\medskip

We now recall an important construction of subspaces in a hierarchically hyperbolic space called 
\emph{standard product regions} introduced in 
\cite[Section~13]{BehrstockHagenSisto:HHS_I} and studied further in 
\cite{BehrstockHagenSisto:HHS_II}. First we define a \emph{consistent tuple}, which will be used to define the two factors in 
the product space.

\begin{defn}[Consistent tuple] \label{defn:consistent_tuple}
Fix $\kappa\geq 0$, and let $\tup{b}\in\prod_{U\in\s} 2^{\fontact U}$ be a tuple such that for each $U\in\s$, the coordinate $b_U$ is a subset of $\fontact U$ with $\diam_{\fontact U}(b_U)\leq \kappa$.  The tuple $\tup{b}$ is \emph{$\kappa$--admissible} if $\dist_U(b_U,\pi_U(\X))\leq \kappa$ for all $U\in\s$.  The $\kappa$--admissible tuple $\tup{b}$ is \emph{$\kappa$--consistent} if, whenever $V\trans W$,
\[\min\left\{\dist_W(b_W,\rho^V_W),\dist_V(b_V,\rho^W_V)\right\}\leq \kappa \]
and whenever $V\nest W$,
\[\min\left\{\dist_W(b_W,\rho^V_W),\diam_{\fontact V}(b_V\cup\rho^W_V(b_W))\right\}\leq \kappa.\]
\end{defn}

\begin{defn}[Nested partial tuple ($\FU U$)]\label{defn:nested_partial_tuple}
Recall $\mathfrak S_U=\{V\in\mathfrak S \mid V\nest U\}$.  Fix
$\kappa\geq\kappa_0$ and let $\FU U$ be the set of
$\kappa$--consistent tuples 
in $\prod_{V\in\mathfrak S_U}2^{\fontact
V}$.
\end{defn}

\begin{defn}[Orthogonal partial tuple ($\mathbf E_U$) ]\label{defn:orthogonal_partial_tuple}
Let $\mathfrak S_U^\orth=\{V\in\mathfrak S\mid V\orth U\}\cup\{A\}$, where
$A$ is a $\nest$--minimal element $W$ such that $V\nest W$ for all
$V\orth U$ (note that $A$ exists by the container axiom for an HHS, 
i.e., Definition~\ref{defn:HHS}.(\ref{item:dfs_orthogonal})).  Fix $\kappa\geq\kappa_0$, let $\mathbf E_U$ be the set of
$\kappa$--consistent tuples in $\prod_{V\in\mathfrak
S_U^\orth-\{A\}}2^{\fontact V}$.
\end{defn}

\begin{defn}[Product regions in $\cuco X$]\label{const:embedding_product_regions}
Given $\cuco X$ and $U\in\mathfrak S$, there is a coarsely
well-defined map $\phi_U\co\mathbf F_{U}\times \mathbf E_U\to\cuco X$ which restricts to coarsely well-defined maps $\phi^\nest,\phi^\orth\co\FU U,\mathbf E_U\to\cuco X$. 
Indeed, for each $(\tup a,\tup b)\in
\mathbf F_U\times \mathbf E_U$, and each $V\in\mathfrak S$, the
projection $\pi_V(\phi_U(\tup a,\tup b))$ is defined as follows.  If $V\nest U$,
then $\pi_V(\phi_U(\tup a,\tup b))=a_V$.  If $V\orth U$, then
$\pi_V(\phi_U(\tup a,\tup b))=b_V$.  If $V\transverse U$, then
$\pi_V(\phi_U(\tup a,\tup b))=\rho^U_V$.  Finally, if $U\nest V$, and
$U\neq V$, let $\pi_V(\phi_U(\tup a,\tup b))=\rho^U_V$.  The tuple $(\pi_V(\phi_U(\tup a,\tup b)))_{V\in \s}\in\prod_{V\in \s} 2^{\fontact V}$ is $\kappa$--consistent (see
\cite[Construction~5.10]{BehrstockHagenSisto:HHS_II}), and therefore
\cite[Theorem~3.1]{BehrstockHagenSisto:HHS_II} provides a point
$x\in\X$ such that $\dist_W(\pi_W(x),\pi_W(\phi_U(\tup a,\tup b)))\leq
\theta_e$ for all $W\in \s$.  Moreover, the point $x$ is \emph{coarsely
unique} in the sense that the set of all $x$ which satisfy
$\dist_W(\pi_W(x),\pi_W(\phi_U(\tup a,\tup b)))\leq \theta_e$ for each
$W\in\s$ has diameter at most $\theta_u$ in $\X$.  We define $\phi_U(\tup
a,\tup b)=x$; the coarse uniqueness of $x$ shows that this map is
coarsely well-defined.  Fixing any $e\in \mathbf E_U$ yields a map $\phi_U^{\nest}\colon \mathbf F_U\times \{e\}\to \X$, and $\phi^\perp$ is defined analogously.  We refer to
$\FU U\times \EU U$ as a \emph{product region}, which we denote $\mathbf P_{U}$.
\end{defn}

We often abuse notation slightly and use the notation $\EU U,\FU U$, and 
$\mathbf P_{U}$ to refer to the image in $\cuco X$ of the associated 
set.
In \cite[Construction~5.10]{BehrstockHagenSisto:HHS_II} it is proven that 
these standard product regions have the property that they are 
``hierarchically quasiconvex subsets'' of $\cuco X$.   We leave 
out the definition of hierarchically quasiconvexity, because its 
only use here is that product regions have ``gate maps,'' as given by the 
following in \cite[Lemma~5.5]{BehrstockHagenSisto:HHS_II}:

\begin{lem}[Existence of coarse gates; {\cite[Lemma~5.5]{BehrstockHagenSisto:HHS_II}}]
	\label{lem:gate}
If $\cuco Y\subseteq\cuco X$ is $k$--hierarchically quasiconvex and
non-empty, then there exists a gate map for $\cuco Y$, i.e., for each
$x\in \cuco X$ there exists $\mathfrak g (x) \in\cuco Y$ such that for all
$V\in\mathfrak S$, the set $\pi_V(\mathfrak g (x))$ (uniformly) coarsely coincides
with the projection of $\pi_V(x)$ to the $k(0)$--quasiconvex set
$\pi_V(\cuco Y)$. The point $\mathfrak g (x)\in \cuco Y$ is called 
the \emph{gate of $x$ in $\cuco Y$}.
\end{lem}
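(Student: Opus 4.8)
The plan is to produce $\mathfrak g(x)$ by realising, as a point of $\cuco X$, the tuple of coordinate-wise nearest-point projections onto the sets $\pi_U(\cuco Y)$, and then using hierarchical quasiconvexity to replace that point by a nearby point of $\cuco Y$. Concretely, for each $U\in\mathfrak S$ let $b_U\subseteq\fontact U$ be a coarse nearest-point projection of $\pi_U(x)$ to the $k(0)$--quasiconvex set $\pi_U(\cuco Y)$. Since $\fontact U$ is $\delta$--hyperbolic, $b_U$ is well defined up to an error depending only on $\delta$ and $k(0)$; it lies within a uniformly bounded distance of $\pi_U(\cuco Y)$; and it has the standard feature that $b_U$ is uniformly close to every geodesic of $\fontact U$ joining $\pi_U(x)$ to a point of $\pi_U(\cuco Y)$.

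The main step is to show that $\tup b=(b_U)_{U\in\mathfrak S}$ is $\kappa$--consistent in the sense of Definition~\ref{defn:HHS}.(\ref{item:dfs_transversal}), with $\kappa$ depending only on the hierarchically hyperbolic structure and on $k$; I expect this to be the principal obstacle. I would argue pair by pair. For a transverse pair $U\transverse V$, write $\cuco Y=\cuco Y^U\cup\cuco Y^V$ where $\cuco Y^U=\{z\in\cuco Y:\dist_U(z,\rho^V_U)\le\kappa_0\}$ and $\cuco Y^V$ is symmetric; this is a cover because every $z\in\cuco Y\subseteq\cuco X$ is $\kappa_0$--consistent. If $\cuco Y=\cuco Y^V$ then $\pi_V(\cuco Y)$, hence $b_V$, lies in a uniformly bounded neighbourhood of $\rho^U_V$ and we are done, and symmetrically if $\cuco Y=\cuco Y^U$; otherwise both pieces are non-empty, and the transversality inequality applied to $x$ itself gives either $\dist_U(x,\rho^V_U)\le\kappa_0$ — in which case, picking $z\in\cuco Y^U$, the points $\pi_U(x)$ and $\pi_U(z)$ both lie near $\rho^V_U$, so $b_U$ lies near $\pi_U(x)$ and hence near $\rho^V_U$ — or else $\dist_V(x,\rho^U_V)\le\kappa_0$, handled symmetrically. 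The nested case $U\propnest V$ follows the same template but is genuinely more delicate: one may assume $b_V$ lies far from $\rho^U_V$ (otherwise the first term of the consistency inequality is already controlled) and must then identify $\rho^V_U(b_V)$ coarsely with $b_U$. The tool is the bounded geodesic image axiom (Definition~\ref{defn:HHS}.(\ref{item:dfs:bounded_geodesic_image})): $\rho^V_U$ is coarsely constant along any geodesic of $\fontact V$ that avoids $\neb_E(\rho^U_V)$, and combining this with consistency of $x$ and of points of $\cuco Y$ one successively identifies $\rho^V_U(b_V)$ with $\rho^V_U(\pi_V(x))$, then with $\pi_U(x)$, then with $\pi_U(z)$ for a suitable $z\in\cuco Y$ (so $\rho^V_U(b_V)$ is coarsely in $\pi_U(\cuco Y)$), and finally with $b_U$. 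The subtlety is that the geodesic $[\pi_V(x),b_V]$ need not itself avoid $\neb_E(\rho^U_V)$, so one must run this argument on carefully chosen subsegments; managing this is, I expect, the hardest part of the proof.

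Granting the consistency of $\tup b$, I would invoke the realisation theorem for hierarchically hyperbolic spaces \cite{BehrstockHagenSisto:HHS_I,BehrstockHagenSisto:HHS_II} to produce $y\in\cuco X$ with $\pi_U(y)$ coarsely equal to $b_U$ for all $U$, with uniform constants. Since each $b_U$ lies within bounded distance of $\pi_U(\cuco Y)$, the point $y$ satisfies $\dist_U(y,\pi_U(\cuco Y))\le\kappa'$ for all $U$ and a uniform $\kappa'$; the defining property of $k$--hierarchical quasiconvexity — that a point all of whose projections lie boundedly close to the sets $\pi_U(\cuco Y)$ lies boundedly close to $\cuco Y$ — then gives $\dist_{\cuco X}(y,\cuco Y)\le R$ for a uniform $R$. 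Taking $\mathfrak g(x)$ to be any point of $\cuco Y$ within $R$ of $y$ and using that the projections $\pi_U$ are uniformly coarsely Lipschitz (Definition~\ref{defn:HHS}.(\ref{item:dfs_curve_complexes})), we get that $\pi_U(\mathfrak g(x))$ coarsely coincides with $b_U$, i.e., with the nearest-point projection of $\pi_U(x)$ onto $\pi_U(\cuco Y)$, uniformly in $x$ and $U$; this is the statement of the lemma. Coarse uniqueness of realisations then shows the assignment $x\mapsto\mathfrak g(x)$ is coarsely well defined. Apart from the nested-pair consistency check, every step is routine manipulation of the structure constants.
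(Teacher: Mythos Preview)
The paper does not contain a proof of this lemma: it is quoted as \cite[Lemma~5.4]{BehrstockHagenSisto:HHS_II} and used as a black box. There is therefore nothing in the present paper to compare your argument against directly.

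That said, the paper does comment on the ingredients of the cited proof in Section~\ref{sec:almost HHS}, in the course of checking that gate maps exist for almost hierarchically hyperbolic spaces (Theorem~\ref{thm:almost HHS properties}(5)). There it is recorded that the proof in \cite{BehrstockHagenSisto:HHS_II} ``uses only the existence of hierarchy paths, realization, and consistent centers \cite[Lemma~2.6]{BehrstockHagenSisto:HHS_II}.'' Your outline matches this template closely: the consistency verification you sketch for the tuple $(b_U)$ is exactly the role played by the consistent-centers lemma, after which realization produces a point of $\cuco X$ and hierarchical quasiconvexity pushes it into $\cuco Y$, just as you describe. So your approach is the expected one; the one place where your sketch and the cited proof may diverge in packaging is that \cite{BehrstockHagenSisto:HHS_II} isolates the consistency check as a standalone lemma rather than arguing it ad hoc, and the mention of hierarchy paths suggests the nested case there may be handled slightly differently from your bounded-geodesic-image argument.
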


\begin{rem}[Surjectivity of projections]\label{rem:surjectivity}
    As one can always
    change the hierarchical structure so that the projection maps are
    coarsely surjective \cite[Remark~1.3]{BehrstockHagenSisto:HHS_II}, 
    we will assume that $\s$ is such a structure.
    That is, for each $U\in \s$, if $\pi_{U}$ is not surjective, then 
    we identify $\fontact U $  with $\pi_U(\cuco X)$. 
\end{rem}

We also need the notion of a hierarchy path, whose existence was
proven in \cite[Theorem 4.4]{BehrstockHagenSisto:HHS_II} (although we 
use the word \emph{path}, since they are quasi-geodesics, typically 
we consider them as discrete sequences of points):

\begin{defn}
For $R\geq 1$, a path $\gamma$ in $\cuco X$ is a \emph{$R$--hierarchy path} if
 \begin{enumerate}
  \item $\gamma$ is a $(R,R)$--quasigeodesic,
  \item for each $W\in\mathfrak S$, $\pi_W\circ\gamma$ is an unparametrized $(R,R)$--quasigeodesic.
An unbounded hierarchy path $[0,\infty)\to\cuco X$ is a \emph{hierarchy ray}.
\end{enumerate}
\end{defn}

We call a domain \emph{relevant} to a pair of points, if the 
projections to that domain are larger than some fixed (although 
possibly unspecified) constant depending only on the hierarchically 
hyperbolic structure. We say a domain is \emph{relevant} for a 
particular quasi-geodesic if it is relevant for the endpoints of that 
quasi-geodesic.

\begin{prop}[{\cite[Proposition~5.17]{BehrstockHagenSisto:HHS_II}}]\label{prop:HHSII5.15}
There exists $\nu\geq 0$ such that for all $x,y\in \cuco X$, all $V\in \s$ with $V$ relevant for $(x,y)$, and all $D$--hierarchy paths $\gamma$ joining $x$ to $y$, there is a subpath $\alpha$ of $\gamma$ with the following properties:
	\begin{enumerate}
	\item $\alpha\subset \mathcal N_\nu(\mathbf P_V)$;
	\item $\pi_U\vert_\gamma$ is coarsely constant on $\gamma-\alpha$
	for all $U\in\s_V\cup\s_V^\perp$, i.e., it is a uniformly bounded 
	distance from a constant map.
	\end{enumerate}
\end{prop}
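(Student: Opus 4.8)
The plan is to reduce conclusion (1) to a statement about coordinate projections, to build $\alpha$ out of the ``active intervals'' along $\gamma$ of the domains in $\s_V\cup\s_V^\perp$, and then to verify (1) by a consistency/bounded--geodesic--image analysis combined with an induction on complexity.

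First I would record a coordinate description of neighborhoods of $\mathbf P_V$. Combining Lemma~\ref{lem:gate} (the gate $\gate(p)$ of $p$ in the hierarchically quasiconvex set $\mathbf P_V$), the explicit formula in Definition~\ref{const:embedding_product_regions} (which shows $\pi_W(\mathbf P_V)$ coarsely coincides with $\fontact W$ when $W\in\s_V\cup\s_V^\perp$ and with $\rho^V_W$ otherwise), and the distance formula (Theorem~\ref{thm:distance_formula}) applied to $d(p,\gate(p))$, one obtains, for each $\nu$, a constant $\nu_0$ so that $p\in\mathcal N_\nu(\mathbf P_V)$ if and only if $\dist_W(\pi_W(p),\rho^V_W)\le\nu_0$ for every $W$ with $V\propnest W$ or $W\transverse V$. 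Thus (1) becomes a uniform bound on the ``upper and transverse'' coordinates of the points of $\alpha$.

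Next I would construct $\alpha$. For each $U\in\s_V\cup\s_V^\perp$, since $\gamma$ is a hierarchy path the projection $\pi_U\circ\gamma$ is an unparametrized quasigeodesic in the hyperbolic space $\fontact U$, hence makes coarsely monotone progress along a geodesic from $\pi_U(x)$ to $\pi_U(y)$; let $I_U\subseteq\gamma$ be the (possibly empty) minimal subpath off of which $\pi_U\circ\gamma$ stays within a uniform constant of $\pi_U(x)$ on the initial piece and of $\pi_U(y)$ on the terminal piece. Since $V$ is relevant for $(x,y)$, the interval $I_V$ is nonempty. Let $\alpha$ be the smallest subpath of $\gamma$ containing $\bigcup_{U\in\s_V\cup\s_V^\perp}I_U$. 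Conclusion (2) then holds essentially by construction: each component of $\gamma-\alpha$ is contained in a component of $\gamma-I_U$, on which $\pi_U$ is coarsely constant.

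The heart of the proof is (1). Fix $p\in\alpha$ and $W$ with $V\propnest W$ or $W\transverse V$; we must bound $\dist_W(\pi_W(p),\rho^V_W)$. Two inputs drive this. (i) For any $U$ relevant for $(x,y)$ and any $Y$ with $U\propnest Y$ or $U\transverse Y$, the consistency axiom together with the bounded geodesic image axiom of Definition~\ref{defn:HHS} force $\pi_U\circ\gamma$ to be coarsely constant on any subpath along which $\pi_Y\circ\gamma$ stays $E$--far from $\rho^U_Y$; hence $I_U$ lies in a uniform neighborhood of $\{q:\dist_Y(\pi_Y(q),\rho^U_Y)\le E\}$. (ii) If $U\in\s_V\cup\s_V^\perp$ and $W\not\orth U$, then $\rho^U_W$ coarsely coincides with $\rho^V_W$: for $U\nest V$ this is the last clause of the consistency axiom, and for $U\orth V$ it follows by applying the partial realization axiom to the orthogonal pair $\{U,V\}$ and reading off the $W$--coordinate of the resulting point (and one checks that such a $U$ satisfies $U\propnest W$ or $U\transverse W$, so (i) applies with $Y=W$). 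Since $V$ lies in $\s_V$ and is not orthogonal to $W$, by (i)--(ii) the interval $I_V$ — and likewise every $I_U$ with $U\in\s_V\cup\s_V^\perp$, $W\not\orth U$ — lies in a uniform neighborhood of $\{q:\dist_W(\pi_W(q),\rho^V_W)\le E\}$; as $\pi_W\circ\gamma$ is an unparametrized quasigeodesic in a hyperbolic space it cannot leave a fixed bounded set and later return to it, so on the whole span of these intervals it stays uniformly close to $\rho^V_W$. A point of $\alpha$ lying outside that span can only be separated from it by subpaths carrying active intervals $I_T$ with $T\nest W$ and $T\notin\s_V\cup\s_V^\perp$; running the same argument with $T$ in place of $V$, and noting such $T$ sits strictly lower in a chain of nested domains so that the recursion terminates by the finite complexity axiom, controls $\pi_W$ there too (throughout one also tracks the container domain $A$ occurring in $\s_V^\perp$). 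The step I expect to be the main obstacle is precisely this last one — controlling $\pi_W$ on the parts of $\alpha$ where the locally active domain is orthogonal to $W$, so that $\pi_W$ is not directly pinned near $\rho^V_W$ by a nearby interval — and the way through it is the combination of the coarse--pinning property of unparametrized quasigeodesics in hyperbolic spaces, the compatibility (ii) of the sets $\rho^{(\cdot)}_W$ under nesting and orthogonality, and finite complexity, which together show that every excursion of $\pi_W$ away from $\rho^V_W$ occurs outside $\alpha$.
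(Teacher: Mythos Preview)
The paper does not contain a proof of this proposition. It is stated in Section~2 as a background result quoted from \cite[Proposition~5.16]{BehrstockHagenSisto:HHS_II}, with no argument supplied; the paper only uses it (together with Remark~\ref{rem:activeinproductregion}) as a black box. So there is nothing in this paper to compare your proposal against.

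That said, your outline is broadly the shape of the standard argument: reduce membership in $\mathcal N_\nu(\mathbf P_V)$ to a coordinate condition via the gate map and the distance formula, build $\alpha$ from the active intervals of the domains in $\s_V\cup\s_V^\perp$, and control the $W$--coordinates on $\alpha$ using consistency and bounded geodesic image. The part you flag as the obstacle --- controlling $\pi_W$ on portions of $\alpha$ where the locally active domain is orthogonal to $W$, and making the induction on complexity precise --- is indeed where the work lies; your sketch there is more heuristic than argument, and in particular the claim that $\rho^U_W$ coarsely coincides with $\rho^V_W$ when $U\orth V$ and $W\not\orth U$ needs a careful case analysis (the partial realization trick gives information about $\pi_W$ of a realization point, not directly about $\rho^U_W$). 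If you want to turn this into a self-contained proof you should consult the original in \cite{BehrstockHagenSisto:HHS_II}, where the ``passing up'' machinery is developed.
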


\begin{rem}\label{rem:activeinproductregion}
Let $x,y\in \cuco X$, and suppose $V$ is relevant for $(x,y)$.  As
$\FU V$ and $\EU V$ consist of $\kappa$--consistent tuples (for a 
fixed $\kappa$) and
$\phi_V\colon \FU V\times \EU V\to\cuco X$ is only coarsely
well-defined, by appropriately increasing $\kappa$ to accomodate for 
the chosen constant $\nu$ in Proposition~\ref{prop:HHSII5.15}, 
we may assume that
$\alpha$ is actually a subset of $\mathbf P_V$.
\end{rem}

It is often convenient to work with equivariant hierarchically
hyperbolic structures, we now recall the relevant structures for 
doing so.  For details see \cite{BehrstockHagenSisto:HHS_II}.

\begin{defn}[Hierarchically hyperbolic groups] \label{def:HHG}
    Let $(\cuco X,\mathfrak S)$ be a hierarchically hyperbolic 
    space.  
     An \emph{automorphism} of $(\cuco X,\mathfrak S)$ consists of a map
    $g\colon \cuco X\to\cuco X$, together with a bijection
    $g\inducedS\colon \mathfrak S\to\mathfrak S$ and, for each
    $U\in\mathfrak S$, an isometry $g\induced(U)\colon \fontact
    U\to\fontact {(g\inducedS{(U)})}$ so that the following diagrams commute up to uniformly bounded error
    whenever the maps in question are defined (i.e., when $U,V$ are
    not orthogonal):
    \begin{center}
    $
    \begin{diagram}
    \node{\cuco 
    X}\arrow[3]{e,t}{g}\arrow{s,l}{\pi_U}\node{}\node{}\node{\cuco 
    X'}\arrow{s,r}{\pi_{g\inducedS(U)}}\\
    \node{\fontact U}\arrow[3]{e,t}{g\induced(U)}\node{}\node{}\node{\fontact 
    (g\inducedS(U))}
    \end{diagram}
    $
    \end{center}
    and
    \begin{center}
    $
    \begin{diagram}
    \node{\fontact 
    U}\arrow[4]{s,l}{\rho^U_V}\arrow[7]{e,t}{g\induced(U)}\node{}\node{}\node{}\node{}\node{}\node{}\node{}\node{}\node{}\node{\fontact (g\inducedS(U))}\arrow[4]{s,r}{\rho^{g\inducedS(U)}_{g\inducedS(V)}}\\
    \node{}\node{}\node{}\node{}\node{}\node{}\node{}\node{}\\
    \node{}\node{}\node{}\node{}\node{}\node{}\node{}\node{}\\
    \node{}\node{}\node{}\node{}\node{}\node{}\node{}\node{}\\
    \node{\fontact 
    V}\arrow[7]{e,t}{g\induced(V)}\node{}\node{}\node{}\node{}\node{}\node{}\node{}\node{}\node{}\node{\fontact (g\inducedS (V))}
    \end{diagram}
    $
    \end{center}
	Two automorphisms $f,f'$ are \emph{equivalent} if
	$f^\diamond=(f')^\diamond$ and for all $U\in \s$ we have $\phi_U=\phi'_U$.
	The set of all such equivalence classes forms the
	\emph{automorphism group} of $(\cuco X,\mathfrak S)$, denoted
	$\Aut(\cuco X,\mathfrak S)$.  A finitely generated group $G$ is
	said to be a \emph{hierarchically hyperbolic group (HHG)} if there
	is a hierarchically hyperbolic space $(\cuco X,\mathfrak S)$ and a
	group homomorphism $G\to\Aut(\cuco X,\frak S)$ so that the induced
	uniform quasi-action of $G$ on $\cuco X$ is metrically proper,
	cobounded, and $\frakS$ contains finitely many $G$--orbits.  Note
	that when $G$ is a hyperbolic group then, with respect to any word
	metric, it inherits a hierarchically hyperbolic structure.
\end{defn}

\subsection{Acylindrical actions}

We recall the basic definitions related to acylindrical actions; the 
canonical references are \cite{Bowditch:tight} and  \cite{Osin:acyl}. 
We also discuss a partial order on these actions which was recently introduced in  
\cite{AbbottBalasubramanyaOsin:extensions}.

\begin{defn}[Acylindrical]
The action of a group $G$ on a metric space $X$ is 
\emph{acylindrical} if for any $\varepsilon>0$ there exist $R,N\geq 0$ such that for all $x,y\in X$ with $\dist(x,y)\geq R$, 
\[
|\{g\in G\mid \dist (x,gx)\leq \varepsilon \textrm{ and } \dist (y,gy)\leq \varepsilon \}|\leq N.
\]
\end{defn}

Recall that given a group $G$ acting on a hyperbolic metric space $X$, an element $g\in G$ is \emph{loxodromic} if the map $\mathbb Z\to X$ defined by $n\mapsto g^nx$ is a quasi-isometric embedding for some (equivalently any) $x\in X$.  However, an element of $G$ may be loxodromic for some actions and not for others.  
Consider, for example, the free group on two generators acting on its Cayley graph and acting on the Bass-Serre tree associated to the splitting $\mathbb F_2\simeq \langle x\rangle *\langle y\rangle$.  In the former action, every non-trivial element is loxodromic, while in the latter action, no powers of $x$ and $y$ are loxodromic.

\begin{defn}[Generalized loxodromic]  An element of a group $G$ is called \emph{generalized loxodromic} if it is loxodromic for some acylindrical action of $G$ on a hyperbolic space.  
\end{defn}

\begin{rem} \label{products} By \cite[Theorem 1.1]{Osin:acyl}, every
acylindrical action of a group on a hyperbolic space either has
bounded orbits or contains a loxodromic element.  By \cite[Theorem
1.4.(L4)]{Osin:acyl} and Sisto \cite[Theorem 1]{Sisto:qconvex}, every
generalized loxodromic element is \emph{Morse}, i.e., every 
quasi-geodesic with endpoints on the axis of the element lies 
uniformly close to that axis (see Definition \ref{defn:Morse}).  Therefore, if a group $H$
does not contain any Morse elements, it does not contain any
generalized loxodromics, and thus $H$ must have bounded orbits in
every acylindrical action on a hyperbolic space.  This is the case
when, for example, $H$ is a non-trivial direct product, that is, a direct product of two infinite groups.
\end{rem}

\begin{defn}[Universal acylindrical action] An acylindrical action of a group on a hyperbolic space is a {\it universal acylindrical action} if every generalized loxodromic element is loxodromic.  Such an action is sometimes called a {\it loxodromically universal action}.
\end{defn}

Notice that if every acylindrical action of a group $G$ on a hyperbolic space has bounded orbits, then $G$ does not contain any generalized loxodromic elements, and the action of $G$ on a point (which is acylindrical) is a universal acylindrical action.

The following notions are discussed in detail in \cite{AbbottBalasubramanyaOsin:extensions}.  We give a brief overview here.
 Fix a group $G$.  Given a (possibly infinite) generating set $X$ of $G$, let $|\cdot|_X$
 denote the word metric with respect to $X$, and let $\Gamma(G,X)$ 
be the Cayley graph of $\Gamma$ with respect to the generating set $X$.  Given two generating
 sets $X$ and $Y$, we say $X$ is \emph{dominated} by $Y$ and write
 $X\preceq Y$ if
\[\sup_{y\in Y}|y|_X<\infty.\] Note that when 
$X\preceq Y$, the action $G\curvearrowright
\Gamma(G,Y)$ provides more information about the group than
$G\curvearrowright\Gamma(G,X)$, and so, in some sense, is a ``larger''
action.
The two generating sets $X$ and $Y$ are equivalent if $X\preceq Y$ and
$Y\preceq X$; when this happens we write $X\sim Y$.

Let $\mathcal{AH}(G)$ be the set of equivalence classes of generating
sets $X$ of $G$ such that $\Gamma(G,X)$ is hyperbolic and the action
$G\curvearrowright \Gamma(G,X)$ is acylindrical.   We denote the equivalence class of $X$ by $[X]$.  The preorder
$\preceq$ induces an order on $\mathcal{AH}(G)$, which we also denote 
$\preceq$.  

\begin{defn}[Largest] We say an equivalence class of generating sets is \emph{largest} if it is the largest
element in $\mathcal{AH}(G)$ under this ordering.
\end{defn}

Given a cobounded acylindrical action of $G$ on a hyperbolic space
$S$, a Milnor--Schwartz argument gives a (possibly infinite)
generating set $Y$ of $G$ such that there is a $G$--equivariant
quasi-isometry between $G\curvearrowright S$ and $G\curvearrowright
\Gamma(G,Y)$.
By a slight abuse of language, we will say that a particular cobounded acylindrical action $G\curvearrowright S$ on a hyperbolic space is largest, when, more precisely, it is the equivalence class of the generating set  associated to this action through the above correspondence, $[Y]$, that is the largest element in $\mathcal{AH}(G)$.

\begin{rem}
By definition, every largest acylindrical action is a universal acylindrical action.  To see this, notice that if $[X]\preceq[Y]$, then the set of loxodromic elements in $G\curvearrowright \Gamma(G,X)$ must be a subset of the set of loxodromic elements in $G\curvearrowright \Gamma(G,Y)$.
\end{rem}

\subsection{Stability} \label{subsec:stability}

Stability is strong coarse convexity property which generalizes quasiconvexity in hyperbolic spaces and convex cocompactness in mapping class groups \cite{DurhamTaylor:stable}.  In the general context of metric spaces, it is essentially the familiar Morse property generalized to subspaces, so we begin there.

\begin{defn}[Morse/stable quasigeodesic]\label{defn:Morse}
Let $X$ be a metric space.  A quasigeodesic $\gamma \subset X$ is
called \emph{Morse} (or $\emph{stable}$) if there exists
a function $N\co \mathbb{R}^2_{\geq 0} \rightarrow \mathbb{R}_{\geq 0}$
such that if $q$ is a $(K,C)$--quasigeodesic in $X$ with endpoints on
$\gamma$, then
$$q \subset \mathcal{N}_{N(K,C)}(\gamma).$$
We call $N$ the \emph{stability gauge} for $\gamma$ and say $\gamma$ 
is $N$--stable if we want to record the constants.
\end{defn}

We can now define a notion of stable embedding of one metric space in another which is equivalent to the one introduced by Durham and Taylor \cite{DurhamTaylor:stable}:

\begin{defn}[Stable embedding]\label{defn:stable embedding}
We say a quasi-isometric embedding $f\co X \rightarrow Y$ between
quasigeodesic metric spaces is a \emph{stable embedding} if there
exists a stability gauge $N$ such that for any quasigeodesic constants
$K,C$ and any 
$(K,C)$--quasigeodesic $\gamma \subset X$, then $f(\gamma)$ is an  
$N(K,C)$--stable
quasigeodesic in $Y$.  We say a subset $X\subseteq Y$ is \emph{stable} if it is undistorted and the inclusion map $i\colon X\to Y$ is a stable embedding.
\end{defn}

The following generalizes the notion of a Morse 
quasigeodesic to subgroups:

\begin{defn}[Subgroup stability]
Let $H$ be subgroup of a finitely generated group $G$.  We say $H$ is a
\emph{stable subgroup} of $G$ if some (equivalently, any) orbit map of
$H$ into some (any) Cayley graph (with respect to a finite generating 
set) of $G$ is a stable embedding.

If for some $h \in G$, $H = \langle h \rangle$  is stable, then we call 
$h$  \emph{stable}. Such elements are often called \emph{Morse elements}.
\end{defn}

Stability of a subset is preserved under quasi-isometries.  Note that stable
subgroups are undistorted in their ambient groups and,
moreover, they are quasiconvex with respect to any choice of
finite generating set for the ambient group.

\section{Altering the hierarchically hyperbolic structure}\label{sec:structures}

The goal of this section is to prove that any hierarchically
hyperbolic space satisfying a technical assumption---the
\emph{bounded domain dichotomy}---admits a hierarchically hyperbolic structure with
unbounded products, i.e., every non-trivial product region in the ambient space 
has unbounded factors; see Theorem
\ref{thm:betterHHSstructure} below.

In particular, this establishes that all 
hierarchically hyperbolic groups admit a hierarchically hyperbolic
group structure with unbounded products. It is for this reason that 
our complete characterization of the contracting property in 
spaces with unbounded products in Section \ref{sec:Charact_contractions} 
yields a characterization of the contracting property for 
all hierarchically hyperbolic
groups, as stated in Theorem~\ref{thmi:char of contracting}.

\subsection{Unbounded products}

Fix a hierarchically hyperbolic space $(\cuco X, \mathfrak S)$.

Let $M>0$ and let $\s^M \subset \mathfrak S$ be the set of
domains $U \in \mathfrak S$ such that there exists $V \in \mathfrak 
S$ and $W \in \mathfrak S_V^{\orth}$ satisfying: 
$U \nest V$, 
$\diam(\fontact V)>M$, and 
$\diam(\fontact W) > M$.

Recall that a set of domains $\mathfrak U \subset \mathfrak S$ is
\emph{closed under nesting} if whenever $U \in \mathfrak U$ and $V \nest U$,
then $V \in \mathfrak U$.

\begin{lem}\label{lem:closed under nesting}
For any $M>0$, the set $\s^M$ is closed under nesting.
\end{lem}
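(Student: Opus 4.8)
The plan is to recognize that the statement is an immediate consequence of the transitivity of the nesting relation $\nest$, which is built into the axioms of a hierarchically hyperbolic space: Definition~\ref{defn:HHS}.(\ref{item:dfs_nesting}) asserts that $\nest$ is a partial order on $\s$, and in particular transitive. So the proof will be a one-line deduction once we unwind the definition of $\s^M$.

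Concretely, I would argue as follows. Suppose $U \in \s^M$ and let $U' \nest U$; the goal is to show $U' \in \s^M$. By the definition of $\s^M$ there exist $V \in \s$ and $W \in \s_V^{\orth}$ with $U \nest V$, $\diam(\fontact V) > M$, and $\diam(\fontact W) > M$. Since $\nest$ is transitive, $U' \nest U$ together with $U \nest V$ yields $U' \nest V$. Thus the very same pair $(V,W)$ now witnesses membership of $U'$ in $\s^M$, so $U' \in \s^M$, and since $U'$ was an arbitrary domain nested in $U$, the set $\s^M$ is closed under nesting.

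I do not expect any genuine obstacle here. The only thing worth remarking is that the data certifying $U \in \s^M$ — namely the domains $V$ and $W$ — depends on $U$ solely through the single condition $U \nest V$, which is automatically inherited by anything nested in $U$; the remaining requirements ($W \orth V$, $\diam(\fontact V) > M$, $\diam(\fontact W) > M$) make no reference to $U$ whatsoever. So there is nothing further to verify.
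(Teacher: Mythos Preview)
Your proof is correct and is essentially identical to the paper's own argument: both unwind the definition of $\s^M$, invoke transitivity of $\nest$ to get the smaller domain nested in the witnessing $V$, and observe that the same witness pair works. The only difference is in variable naming.
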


\begin{proof}
Let $U \in \s^M$ and $V \nest U$. 
By definition of $U\in\s^{M}$, there exists $Z \in \s^M$ with 
$U \nest Z$ and satisfying: 
$\diam(\fontact Z)>M$ and there exists 
$W \in \mathfrak S_Z^{\orth}$ such that $\diam(\fontact W) > M$. 
Since $V \nest Z$, it follows that $V\in\s^{M}$,
as desired.
\end{proof}

\begin{defn}[Bounded domain dichotomy]\label{defn:bounded dd}
We say $(\cuco X, \mathfrak S)$ has the \emph{$M$--bounded
domain dichotomy} if there exists $M>0$ such that any $U \in 
\mathfrak S$ with $\diam(\fontact U) > M$ satisfies $\diam(\fontact U) = \infty$.
If the value of $M$ is not important, we simply refer to the 
\emph{bounded domain dichotomy}.
\end{defn}

Recall that for every hierarchically hyperbolic group $(G,\s)$, the set of domains $\s$ contains  finitely many $G$--orbits and each $g\in G$ induces an isometry $\fontact U\to \fontact (g^\diamond(U))$ for each $U\in\s$ (see Definition \ref{def:HHG}).  It thus follows that every hierarchically hyperbolic group has the bounded 
domain dichotomy. (Also, note that this property implies the space is 
``asymphoric'' as defined in
\cite{BehrstockHagenSisto:quasiflats}.)

\begin{defn}[Unbounded products]\label{defn:unbounded products}
We say that a hierarchically hyperbolic space $(\cuco X, \mathfrak S)$
has \emph{unbounded
products} if it has the bounded domain dichotomy and the property that 
if $U \in \mathfrak S - \{S\}$ has $\diam(\mathbf F_U) = \infty$, 
then 
$\diam(\mathbf E_U) = \infty$. 
\end{defn}

\subsection{Almost hierarchically hyperbolic spaces}\label{subsec:almost HHS}

In this section we introduce a tool for verifying a space is 
hierarchically hyperbolic.

The following is a weaker version of the orthogonality axiom:

\begin{enumerate}
\item[$(3')$] {\bf (Bounded pairwise orthogonality)} $\frakS$ has a symmetric and anti-reflexive relation called \emph{orthogonality}: we write $V\perp W$ when $V,W$ are orthogonal.  Also, whenever $V\nest W$ and $W \perp U$, we require that $V\perp W$.  Moreover, if $V\perp W$, then $V,W$ are not $\nest$--comparable.
  Finally, the cardinality of any collection of pairwise orthogonal domains is uniformly bounded by $\xi$.
  \end{enumerate}

By \cite[Lemma 2.1]{BehrstockHagenSisto:HHS_II}, the orthogonality
axiom (Definition \ref{defn:HHS}, (3)) for an hierarchically
hyperbolic structure implies axiom $(3')$.  However, the converse does
not hold; that is, the last condition of $(3')$ does not directly imply the
container statement in (3), and thus this is an {\it a priori} strictly weaker
assumption.  However, as is proven in the appendix in Theorem~\ref{thm:almost_HHS_are_HHS}, this weakened 
version of the axiom is enough to 
produce a hierarchically hyperbolic structre.

We now introduce the notion of an \emph{almost hierarchically hyperbolic
space}:

\begin{defn}[Almost HHS] \label{almostHHS}
If $(\X,\s)$ satisfies all axioms of a hierarchically hyperbolic space
except (3) and additionally satisfies axiom $(3')$, then
$(\X,\s)$ is an \emph{almost hierarchically hyperbolic space}.
\end{defn}

In the appendix, Berlyne and Russell prove 
Theorem~\ref{thm:almost_HHS_are_HHS}, establishing that if a 
space is 
almost hierarchically hyperbolic, then the associated structure can 
be modified to obtain a hierarchically hyperbolic structure on the 
original space. This result is used in our proof of 
Theorem~\ref{thm:betterHHSstructure}.

\subsection{A new hierarchically hyperbolic structure} \label{sec:newHHS}

In this section we describe a new hierarchically hyperbolic structure
on hierarchically hyperbolic spaces with the bounded domain dichotomy. 
We first describe the hyperbolic spaces that
will be part of the new structure.

Let $(\cuco X, \mathfrak S)$ be a hierarchically hyperbolic space with
the $M$--bounded domain dichotomy.  Recall that we define $\mathfrak S^{M} \subset
\mathfrak S$ to be the set of $U\in\mathfrak S$ such that there 
exists $U\nest V$ with $\diam(\fontact V)>M$ 
for which there exists a 
$W \in \mathfrak S_V^{\orth}$ satisfying $\diam(\fontact W) > M$.  For each $U \in
\mathfrak S$, define $\mathfrak S^M_{U} \subset \mathfrak S_U$
similarly.

\begin{rem}[Factored spaces]\label{rem:factoredspaces}
	As defined in \cite{BehrstockHagenSisto:asdim}, given $(\cuco
	X,\frakS)$ and $\frakT \subset \frakS$ the \emph{factored space}
	$\wideFU {\frakT}$ is the space obtained from $\cuco X$ by
	coning-off each $\FU V\times \{e\}$ for all $V\in\frakT$ and all $e\in\EU V$.  Sometimes we abuse
	language slightly and refer to this as the factored space obtained
	from $\cuco X$ by collapsing $\frakT$.  In particular, when $S$ is
	the $\nest$--maximal element of $\frakS$, then $\fontact S$ can 
	be taken to be the space $\wideFU {\frakS - \{S\}}$,
	which is obtained from $\X$ by coning-off $\mathbf F_U\times \{e\}$ for all
	$U\in\s - \{S\}$ and all $e\in \EU U$.
    
    We often consider the case of a fixed  $(\cuco X,\frakS)$ and 
    $U\in\frakS$ and then 
    applying this construction to the hierarchy hyperbolic structure   
    $(\FU U,\s_U)$. For this application, 
    note that 
    $\pi_U(\cuco X)$ is quasi-isometric to $\widehat{\mathbf
    F}_{\s_U - \{U\}}$, by 
    \cite[Corollary~2.9]{BehrstockHagenSisto:asdim}, and thus so is $\fontact U$, by
    Remark~\ref{rem:surjectivity}.
\end{rem}

\begin{lem}\label{lemma:coningpaths} Let $(\cuco X,\frakS)$ be a hierarchically hyperbolic 
	space and consider $\frakT\subset \frakS$ which is closed 
	under nesting. Let $\gamma$ be a hierarchy path in $(\cuco 
	X,\frakS)$. Then, the path obtained by including $\gamma\subset 
	\cuco X \subset \wideFU {\frakT}$ is an unparametrized 
	quasi-geodesic. Moreover, if for each $W\in\frak T$ which is a 
	relevant domain for $\gamma$ and for each $e\in \EU W$, we modify the path through $\FU W\times\{e\}$ 
	by removing all but the first and last vertex of the hierarchy 
	path which passes through $\FU W\times\{e\}$, 
	then the new path obtained, $\hat{\gamma}$ is a hierarchy path for 
	$(\wideFU {\frakT},\frakS - \frakT)$.
\end{lem}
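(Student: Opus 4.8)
The plan is to read off the structure of the factored space from \cite{BehrstockHagenSisto:asdim}, dispatch the unparametrized statement quickly using coarse monotonicity, and then do the real work on the ``Moreover'' clause. First I would record why the hyperbolic spaces and projections indexed by $\frakS\setminus\frakT$ survive the coning-off. Since $\frakT$ is closed under nesting, no $U\in\frakS\setminus\frakT$ is nested in any $V\in\frakT$, so for each such pair exactly one of $U\transverse V$, $U\orth V$, $V\propnest U$ holds; inspecting Definition~\ref{const:embedding_product_regions} in each case shows the $U$--coordinate of $\phi_V(\tup a,\tup b)$ is either $\rho^V_U$ or $b_U$, hence is coarsely constant on $\FU{V}\times\{e\}$ for every $e\in\EU{V}$. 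Therefore $\pi_U$ is coarsely constant on each coned-off set, descends to a coarsely Lipschitz map on $\wideFU{\frakT}$ agreeing with $\pi_U^{\cuco X}$ on points coming from $\cuco X$, and $(\wideFU{\frakT},\frakS\setminus\frakT)$ is an HHS (Remark~\ref{rem:factoredspaces}, \cite{BehrstockHagenSisto:asdim}) whose distance formula (Theorem~\ref{thm:distance_formula}) reads $\dist_{\wideFU{\frakT}}(x,y)\asymp_{K,C}\sum_{U\in\frakS\setminus\frakT}\ignore{\dist_U(x,y)}{s}$.

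For the first assertion: the map $\cuco X\to\wideFU{\frakT}$ does not increase distances, so the image of $\gamma$ is a coarse path with uniformly close consecutive vertices. Each $\pi_U\circ\gamma$ with $U\in\frakS\setminus\frakT$ is an unparametrized $(R,R)$--quasigeodesic in the $\delta$--hyperbolic space $\fontact{U}$, hence coarsely monotone: $\dist_U(\gamma(a),\gamma(c))\geq\dist_U(\gamma(a),\gamma(b))+\dist_U(\gamma(b),\gamma(c))-O_{R,\delta}(1)$ whenever $a\leq b\leq c$. Summing over $\frakS\setminus\frakT$ (choosing the distance-formula threshold large relative to this per-domain error, which accumulates only over the finitely many relevant domains) and invoking the distance formula above, the image of $\gamma$ satisfies the coarse reverse triangle inequality; for a coarsely Lipschitz coarse path this is exactly the condition to be an unparametrized quasigeodesic, reparametrized by $a\mapsto\dist_{\wideFU{\frakT}}(\gamma(0),\gamma(a))$, which is coarsely nondecreasing for the same reason.

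For the ``Moreover'' clause I must check that $\hat\gamma$ is an $R'$--hierarchy path in $(\wideFU{\frakT},\frakS\setminus\frakT)$. For $U\in\frakS\setminus\frakT$, $\pi_U\circ\hat\gamma$ is $\pi_U\circ\gamma$ with some vertices deleted, each lying in a set $\FU{W}\times\{e\}$ ($W\in\frakT$ relevant) on which $\pi_U$ is coarsely constant by the first paragraph, hence $O(1)$ from a retained neighbour; so $\pi_U\circ\hat\gamma$ is still an unparametrized quasigeodesic. Consecutive retained vertices are $O(1)$--close in $\wideFU{\frakT}$ (joined by a $\gamma$--edge, or endpoints of a collapsed run inside a $\FU{W}\times\{e\}$, which is coned to diameter $O(1)$), giving the upper quasigeodesic bound. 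The crux is the lower bound, i.e. that $\hat\gamma$ does not stall: writing $\hat\gamma(i)=\gamma(a)$, $\hat\gamma(j)=\gamma(b)$, the index length $|i-j|$ is the number of vertices of $\gamma|_{[a,b]}$ minus the number deleted; since $\gamma$ is a quasigeodesic the first count is $\asymp\sum_{U\in\frakS}\ignore{\dist_U(\gamma(a),\gamma(b))}{s}$, so it suffices to show the number deleted is $\asymp\sum_{U\in\frakT}\ignore{\dist_U(\gamma(a),\gamma(b))}{s}$, which then yields $|i-j|\asymp\sum_{U\in\frakS\setminus\frakT}\ignore{\dist_U(\gamma(a),\gamma(b))}{s}\asymp\dist_{\wideFU{\frakT}}(\gamma(a),\gamma(b))$. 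To establish this I would take $W_1,\dots,W_r$ to be the $\nest$--maximal relevant $\frakT$--domains; by Proposition~\ref{prop:HHSII5.15} and Remark~\ref{rem:activeinproductregion} all projection of $\gamma|_{[a,b]}$ to $\frakT$ occurs inside the subpaths $\alpha_{W_m}\subset\mathbf P_{W_m}$ (here $\frakT$ closed under nesting gives $\s_{W_m}\subseteq\frakT$, and finite complexity gives that every relevant $\frakT$--domain is nested in some $W_m$); and inside $\mathbf P_{W_m}=\FU{W_m}\times\EU{W_m}$ the restricted distance formula matches the lengths of the collapsed runs in the $\FU{W_m}$--fibers with $\sum_{U\in\s_{W_m}}\ignore{\dist_U}{s}$, up to uniform error and up to the residual $\EU{W_m}$--motion, whose $\frakT$--part is handled by induction on complexity.

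The hard part is exactly this last accounting. The subtlety is that a hierarchy path can spend its $\frakT$--progress either by sitting for many consecutive vertices inside one fiber $\FU{W}\times\{e\}$ (a long run, removed wholesale by the collapsing) or by moving essentially diagonally, making comparable progress in $\frakT$-- and non-$\frakT$--directions at each step (short runs that are not removed, but in which case the $\frakT$--progress is already dominated by the non-$\frakT$--progress, so the lower bound survives); both are consistent with all coordinate projections being unparametrized quasigeodesics, and it is the bounded step size built into the definition of a hierarchy path that forces this dichotomy. Making it quantitative — tracking the distance-formula thresholds, bounding how many of the $\alpha_{W_m}$ can meet a given product region, and running the complexity induction needed when $\frakT$ contains mutually orthogonal domains — is the only genuine obstacle; the descent of projections, the coarse Lipschitz bounds, and the coarse-monotonicity argument are routine.
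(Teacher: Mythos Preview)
Your treatment of the first assertion (that $\gamma\subset\wideFU{\frakT}$ is an unparametrized quasigeodesic) is in the same spirit as the paper's, which also argues via monotonicity of the projections to the surviving domains.

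Where the two diverge is the ``Moreover'' clause. The paper does not attempt your direct vertex-count. Instead it inducts \emph{up} the nesting lattice: it takes the $\nest$--minimal relevant elements $\mathfrak U\subset\frakT$, applies \cite[Lemma~2.14]{BehrstockHagenSisto:HHS_II} to color $\mathfrak U$ with boundedly many colors so that same-colored domains are pairwise transverse, and then cones off one color class at a time (together with everything nested in it). Each intermediate stage is again a genuine hierarchically hyperbolic space in which $\gamma$ still projects monotonically to every surviving domain, and the paper asserts that the correspondingly shortened $\hat\gamma$ is a hierarchy path there with uniform constants. After exhausting the colors of $\mathfrak U$ it moves one level up the lattice and repeats, terminating by finite complexity; finally it cones off the non-relevant domains of $\frakT$, which does not affect $\hat\gamma$.

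Your route instead works top-down through the $\nest$--maximal relevant $W_m\in\frakT$ and recurses into $\EU{W_m}$. The line ``it suffices to show the number deleted is $\asymp\sum_{U\in\frakT}\ignore{\dist_U}{s}$, which then yields $|i-j|\asymp\sum_{U\in\frakS\setminus\frakT}\ignore{\dist_U}{s}$'' hides a real pitfall: coarse equivalences do not subtract, and a priori $\hat\gamma$ could retain many vertices all lying inside a single coned region (for instance the first and last vertices of many short runs through distinct fibers $\FU W\times\{e\}$ of one $\mathbf P_W$, all of which collapse to diameter $O(1)$ in $\wideFU\frakT$), so that $\hat\gamma$ stalls even though its image is an unparametrized quasigeodesic. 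Your long-run/diagonal dichotomy and the proposed recursion into the orthogonal factor are aimed precisely at this, and something along those lines can likely be made rigorous, but it is not the paper's argument: the coloring lemma lets the paper stay inside a bona fide HHS at every intermediate stage and thereby sidestep the bookkeeping you correctly flag as ``the only genuine obstacle''.
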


\begin{proof}
	The proof is by induction on complexity.  Consider all the
	nest-minimal elements $\mathfrak {U}\subset \frakT$ which are
	relevant for $\gamma$; by Proposition~\ref{prop:HHSII5.15} and 
	Remark~\ref{rem:activeinproductregion} for each such $U$ there is 
	a subpath of $\gamma$ which passes through a collection of slices 
	$\FU U\times\{e\}$ within the product region associated to $U$.  By
	\cite[Lemma~2.14]{BehrstockHagenSisto:HHS_II} there is a bounded
	(in terms of $\mathfrak S$) coloring of $\mathfrak {U}$ with the
	property that all the domains of a given color are pairwise
	transverse.  Starting from $(\cuco X, \frakS)$, we take one color
	at a time, together with all the domains nested inside domains of
	that color, and create the factored space by coning off those
	domains.  At each step, we obtain a new hierarchically hyperbolic
	space with the property that in this space the relevant domains
	for $\gamma$ are exactly the original ones except for those in the
	colors we have coned off thus far.  Since this path still travels
	monotonically through each of the relevant domains, it is an
	unparametrized quasi-geodesic in the new factored space.  Thus the
	path $\hat{\gamma}$ is a parametrized quasi-geodesic 
	and thus a
	hierarchy path in the new factored space (with constants 
	depending only on the constants for the original hierarchy path).  Once the colors of
	$\mathfrak {U}$ are exhausted, repeat one step up the nesting
	lattice.  Since the complexity of a hierarchically hyperbolic and
	the coloring are both bounded, this will terminate after finitely
	many steps.  Finally we cone off any domains in $\frak T$ which
	are not relevant for $\gamma$ to obtain the space $(\wideFU
	{\frakT},\frakS - \frakT)$.  Through this final step
	$\hat{\gamma}$ remains a uniform quality hierarchy path since it is still a
	quasigeodesic.
\end{proof}

The next result uses the above spaces to obtain a hierarchically hyperbolic 
structure with particularly nice properties from a given hierarchically hyperbolic structure.

\begin{thm} \label{thm:betterHHSstructure}
Every hierarchically hyperbolic space with the bounded domain
dichotomy admits a hierarchically hyperbolic
structure with unbounded products.
\end{thm}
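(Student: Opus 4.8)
The plan is to build the new structure $(\cuco X, \mathfrak S')$ by \emph{deleting} the domains that are responsible for bounded factors and transferring whatever hyperbolic geometry they carried into an enlarged maximal space. Concretely, fix the constant $M$ from the $M$--bounded domain dichotomy and recall $\mathfrak S^M \subset \mathfrak S$, the set of domains nested in some $V$ which admits an orthogonal $W$ with both $\fontact V$ and $\fontact W$ of diameter $>M$; by Lemma~\ref{lem:closed under nesting} this set is closed under nesting. The complement $\mathfrak S^M$ consists exactly of the domains $U\notin\{S\}$ for which $\FU U$ can be infinite while $\EU U$ is bounded — or rather, the domains we need to worry about. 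I would first observe, using the bounded domain dichotomy and the clean container property, that if $U\in\mathfrak S\setminus\mathfrak S^M$ then every product region $\PU U=\FU U\times\EU U$ has a bounded factor: indeed, if $\diam(\fontact V)>M$ for some $V$ with $U\orth$-container-related issues... the clean containers hypothesis guarantees the container associated to $U$ inside its host is itself orthogonal to $U$, so the obstruction to $\EU U$ being unbounded is genuinely detected by $\mathfrak S^M$-membership. This identifies precisely the set $\mathfrak T := \mathfrak S\setminus(\mathfrak S^M\cup\{S\})$ of ``bad'' domains to remove.

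Next I would take $\mathfrak S' := \mathfrak S^M \cup \{S'\}$ where $S'$ is a new maximal domain, and set $\fontact{S'} := \wideFU{\mathfrak T} = \widehat{\mathbf F}_{\mathfrak S\setminus(\mathfrak S^M\cup\{S\})}$, the factored space obtained from $\cuco X$ by coning off all product regions $\FU U\times\{e\}$ with $U\in\mathfrak T$. The partial orders, orthogonality, and transversality on $\mathfrak S'$ are inherited from $\mathfrak S$ (with everything in $\mathfrak S^M$ nested in $S'$); the projections $\pi_U$ for $U\in\mathfrak S^M$ are unchanged, and $\pi_{S'}:\cuco X\to\fontact{S'}$ is the composition of the coning-off map with the old $\pi_S$. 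The $\rho$-maps among surviving domains are inherited; the new $\rho^U_{S'}$ for $U\in\mathfrak S^M$ is the image of $\rho^U_S$ under coning. The key geometric input for checking the HHS axioms is Lemma~\ref{lemma:coningpaths}: since $\mathfrak T$ is closed under nesting (it is the complement in $\mathfrak S$ of the nesting-closed set $\mathfrak S^M$, minus $S$, and one checks this is still nesting-closed), hierarchy paths in $(\cuco X,\mathfrak S)$ descend to hierarchy paths in $(\fontact{S'}, \mathfrak S^M)$, which lets me verify that $\fontact{S'}$ is hyperbolic (it is quasi-isometric to a factored space of a hierarchically hyperbolic space along a nesting-closed family, hence itself admits a hierarchically hyperbolic — indeed here just hyperbolic — structure; more directly, large links / bounded geodesic image transfer) and that the large-link axiom and bounded-geodesic-image axiom hold relative to the new maximal domain.

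The verification of the nine HHS axioms for $(\cuco X, \mathfrak S')$ is largely bookkeeping: nesting, finite complexity, partial realization, and uniqueness are immediate or follow since $\mathfrak S'\subset\mathfrak S$-data restricts cleanly; transversality/consistency and the container axiom require that no new orthogonality was created — which is true since we only deleted domains — and that the container for a surviving $U$ still lies in $\mathfrak S^M$, which is exactly Lemma~\ref{lem:closed under nesting} combined with clean containers. The uniqueness axiom is the one place one must be slightly careful: if $\dist_{\cuco X}(x,y)$ is large, the old axiom gives some $V\in\mathfrak S$ with $\dist_V(x,y)$ large; if $V\in\mathfrak S^M$ we are done, and if $V\in\mathfrak T$ then $x,y$ are separated in a product region $\PU V$ with bounded $\EU V$, so they are separated in $\FU V\subset\cuco X$, hence (after iterating down the nesting lattice, which terminates by finite complexity) we find a witness in $\mathfrak S^M\cup\{S'\}$ — here the coning-off must not have collapsed the relevant distance, which is guaranteed because the witnessing subdomain survives or contributes to $\fontact{S'}$. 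Finally, the unbounded-products conclusion: by construction every surviving non-maximal $U\in\mathfrak S^M$ has $\diam(\fontact V)>M$ and $\diam(\fontact W)>M$ for the relevant $V\supseteq U$ and $W\orth V$, so by the bounded domain dichotomy these are infinite, forcing both $\FU U$ and $\EU U$ infinite.

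\textbf{Main obstacle.} I expect the delicate point to be showing that $\fontact{S'}$ is genuinely $\delta$-hyperbolic with the bounded-geodesic-image and large-link axioms holding for $S'$ — i.e., that coning off all of $\mathfrak T$ at once (not one nesting-closed piece at a time) still produces a hyperbolic space and that the $\rho$-projections behave. Lemma~\ref{lemma:coningpaths} is designed precisely to control this, but marrying its conclusion (hierarchy paths descend) with the large-link axiom for the new structure — ensuring that a path in $\fontact{S'}$ which makes progress is ``seen'' by a bounded number of surviving subdomains $T_i\in\mathfrak S^M$ — is where the real work lies, and where the hypotheses (bounded domain dichotomy to rule out intermediate-diameter pathologies, clean containers to keep the container bookkeeping inside $\mathfrak S^M$) are actually used.
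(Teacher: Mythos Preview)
Your construction is inverted relative to what is needed, and this causes several of the axioms to fail.

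The paper takes as its new index set $\mathfrak T = \{S\}\cup\{U:\FU U,\EU U\text{ both unbounded}\}$ and defines the new top-level hyperbolic space to be the factored space $\widehat{\cuco X}_{\mathfrak S^M}$ obtained by coning off $\mathfrak S^M$. The point is that after coning off $\mathfrak S^M$, the surviving domains in $\mathfrak S\setminus\mathfrak S^M$ contain no orthogonal pair with both $\fontact$-spaces of diameter $>M$, so the factored space is hyperbolic by \cite[Corollary~2.16]{BehrstockHagenSisto:quasiflats}. You do the opposite: you keep $\mathfrak S^M$ as the non-maximal index set and cone off its complement to form $\fontact{S'}$. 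This fails in three places.

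First, your claim that $\mathfrak T := \mathfrak S\setminus(\mathfrak S^M\cup\{S\})$ is closed under nesting is false: $\mathfrak S^M$ is closed under nesting (downward), so its complement is closed \emph{upward}, not downward. Concretely, one can have $V\propnest U$ with $V\in\mathfrak S^M$ (witnessed by $V$ itself and some $W\bot V$) while $U\notin\mathfrak S^M$ (e.g.\ $\fontact U$ bounded and no larger witness). Thus Lemma~\ref{lemma:coningpaths} does not apply to your $\mathfrak T$. Second, and more seriously, your proposed top-level space $\fontact{S'}$ is not hyperbolic: it carries an HHS structure with index set $\mathfrak S^M\cup\{S\}$, and $\mathfrak S^M$ is precisely the set of domains sitting inside genuine unbounded product regions, so $\fontact{S'}$ contains quasi-isometrically embedded products and cannot be $\delta$-hyperbolic. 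Third, your final paragraph is incorrect: for $U\in\mathfrak S^M$ you only know $U\nest V$ with $\fontact V$ infinite and some $W\bot V$ infinite; this forces $\EU U$ infinite (since $W\bot U$ too) but says nothing about $\FU U$, which can be bounded. So the structure you build does not have unbounded products.

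The fix is exactly to swap the roles: cone off $\mathfrak S^M$ to build the top space (this is where Lemma~\ref{lem:closed under nesting} is used), and retain as non-maximal domains only those $U$ with both $\FU U$ and $\EU U$ unbounded. Clean containers is then used to verify the container portion of the orthogonality axiom for this retained set.
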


\begin{proof}
Let $(\cuco X, \mathfrak S)$ be a hierarchically hyperbolic space. 
Let $\mathfrak T$ denote the $\nest$--maximal element $S$ together with the subset of $\mathfrak S$ consisting of 
all $U\in\mathfrak S$ with
both $\mathbf{F}_U$ and $\mathbf{E}_U$ unbounded.

We begin to define our new hierarchically hyperbolic structure on
$\cuco X$ by taking $\mathfrak T$ as our index set.  For each $U\in
\frakT - \{S\}$ we set the associated hyperbolic space 
$\mathcal T_{U}$ to be
$\fontact U$.  For the top-level domain, $S$, we obtain a hyperbolic 
space, $\mathcal T_{S}$, as follows.  By
Lemma \ref{lem:closed under nesting}, $\mathfrak S^{M}$ is closed
under nesting and hence $\widehat{\cuco X}_{\mathfrak S^{M}}$ is a
hierarchically hyperbolic space.  Moreover, since this hierarchically
hyperbolic space has the property that no pair of orthogonal domains
both have diameter larger than $M$, by
\cite[Corollary~2.16]{BehrstockHagenSisto:quasiflats} it is hyperbolic
for some constant depending only on $(\cuco X, \mathfrak S)$ and $M$; 
we call this space $\mathcal T_{S}$.

To avoid confusion, 
we use the notation $\dist_S$ for distance in $\mathcal T_S$ and the
notation $\dist_{\fontact S}$ for distance in $\fontact S$.

When $U\neq S$, the projections are as defined in the original
hierarchically hyperbolic space.  We take the projection $\pi_S$ to be
the factor map $\cuco X\to\mathcal T_S$.  If $U \in \frakT$ and
$U\neq S$, then the relative projections are defined as in
$(\cuco X, \frakS)$. For the remaining cases the relative projections 
are as follows: $\rho^S_V$ is defined to be $\pi_{V}$ and 
$\rho^V_S$ is defined to be the image of $\mathbf F_V$ under the factor map
$\X\to \mathcal T_S$.

We now check the axioms to verify that $(\X,\frakT)$ is an almost
hierarchically hyperbolic space (i.e., all the conditions of a
hierarchically hyperbolic space except for a weakened version of the
orthogonality axiom).  Once these axioms have been verified, we can then invoke
Theorem~\ref{thm:almost_HHS_are_HHS} to conclude that the
almost hierarchically hyperbolic structure $\frakT$ can be modified to yield
an actual hierarchically hyperbolic space.  By construction,
$(\X,\frakT)$ satisfies the hypothesis of
Corollary~\ref{cor:unbddproducts}, and therefore the associated modified
hierarchically hyperbolic structure will have unbounded products, as
desired.

{\bf Projections:} The only case to check is for the top-level domain 
$S$. Since $\pi_S$ is a factor map, it is coarsely Lipschitz and 
coarsely surjective. 

 {\bf Nesting:} The partial order and projections are given by
 construction.  The diameter bound in the case of nesting projections
 is immediate from the bound from $(\X,\s)$, except in the case of 
 $\rho^{V}_{S}$ for $V\in\frakT$. The bound on the diameter of 
 $\rho^{V}_{S}$ follows from the construction of $\mathcal T_{S}$ as a 
 factor space and the fact that $\frakT \subset \frakS^{M}$.

 {\bf Orthogonality:} We now verify axiom $(3')$ is satisfied by this new structure.
Tthe first three conditions are clear, since $\frakT\subseteq
\s$ and thus they are inherited from the
 hierarchically hyperbolic structure $(\X,\s)$. 
 For the last condition, any collection of pairwise orthogonal
domains in $\frakT$ is also a collections of pairwise orthogonal
domains in $\s$, and thus by
\cite[Lemma~2.2]{BehrstockHagenSisto:HHS_II} has uniformly bounded
size, verifying the axiom.

 {\bf Transversality and consistency:} This axiom only involves 
 domains which are not nest-maximal, and hence holds 
 using the 
 original constants from the
 hierarchically hyperbolic structure on $(\cuco X, \mathfrak S)$.

{\bf Partial realization:} 
This axiom only involves 
 domains which are not nest-maximal, and hence holds 
 using the 
 original constants from the
 hierarchically hyperbolic structure on $(\cuco X, \mathfrak S)$.  
 
{\bf Finite complexity:} This clearly holds by construction.
 
 {\bf Large link axiom:} Let $\lambda$ and $E$ be the constants from the large link axiom for $(\X,\s)$, let $W\in\frakT$, and let $x,x'\in\X$.  Consider the set $\{T_i\}\subset \s_W-\{W\}$ provided by the large link axiom for $(\X,\s)$.  Since $T_i\nest W$, it follows that $\mathbf E_{T_i}$ is unbounded for each $i$.  Let $T\in\frakT_W-\{W\}$.  If $\dist_T(x,x')>E\cdot M$, it follows that $\mathbf F_T$ is unbounded.  Furthermore, $\dist_{\fontact T}(x,x')>E$, whence $T\nest T_i$ for some $i$ by the large link axiom for $(\X,\s)$.  Therefore $\mathbf F_{T_i}$ is unbounded, and so $T_i\in\frakT$. The result follows.

  {\bf Bounded geodesic image:} For all domains in
  $\frakT - \{S\}$, the corresponding hyperbolic spaces are
  unchanged from those in the original structure and thus the axiom 
  holds in these cases.  
  
  Hence the only case which it remains
  to check is when $W=S$.  Suppose $\gamma$ is a geodesic in $\mathcal T_S$, 
  and $V\in \frakT - \{S\}$ such that 
  $\diam_{\fontact V}(\rho^S_V(\gamma))> E$. 
  The partial realization axiom implies that there exists a hierarchy path 
  $\bar{\gamma}\subset \cuco X$ whose end-points project under 
  $\pi_{S}$ to the 
  end-points of $\gamma$. This projected path is a quasigeodesic by 
  Lemma~\ref{lemma:coningpaths}. Since $\mathcal T_S$ is hyperbolic, 
  the projected path lies uniformly close to $\gamma$.
  By \cite[Proposition 5.17]{BehrstockHagenSisto:HHS_II}) we can 
  replace $\bar{\gamma}$ by an appropriate subpath for which 
  the only relevant domains are all nested in $V$; thus  
  $\bar{\gamma}\subset \mathbf P_{V}$. By definition, there is a 
  bounded distance between 
  $\rho^{V}_{S}$ and $\pi_{S}(\mathbf P_{V})$; thus 
  $\pi_{S}(\bar{\gamma})$ (and hence $\gamma$) 
  is a bounded distance from $\rho^{V}_{S}$, as needed. 
  
{\bf Uniqueness:} Let $\kappa > 0$.  We can take $\theta'_u>
\max\{\theta_u(\kappa),M\}$, where $\theta_u(\kappa)$ is the original
constant from the uniqueness axiom for $(\cuco X, \mathfrak S)$.  Then
if $x,y \in \cuco X$ with $\dist(x,y)>\theta'_u$, then uniqueness for
$(\cuco X, \mathfrak S)$ implies there exists $U \in \mathfrak S$ with
$\dist_{\fontact U}(x,y)>M$.  Either $U \in \mathfrak T$ or
$\diam(\fontact U) = \infty$ and $\mathbf E_U$ is bounded.  We are
done in the first case.  In the second case, by construction 
the factor space $\hat U$ of $\mathbf F_U$ obtained by factoring $\frakT_{U}$ 
is quasi-isometrically embedded in $\mathcal T_{S}$ and there is a 
$1$--Lipschitz map from $\hat U$ to $\fontact U$. Thus the lower 
bound on distance in $\fontact U$ provides a lower bound on the 
distance in $\hat U$, which, in turn, provides a lower bound in 
$\mathcal T_{S}$, as desired.
\end{proof}

\begin{cor}\label{cor:betterHHG}
Every hierarchically hyperbolic group admits a 
hierarchically hyperbolic group structure with unbounded products.
\end{cor}

\begin{proof}
Recall that every hierarchically hyperbolic group has the bounded
domain dichotomy.  Accordingly, if we start with a hierarchically
hyperbolic group, $(G,\frakS)$, then
Theorem~\ref{thm:betterHHSstructure} yields a hierarchically
hyperbolic structure with unbounded products, $(G,\frakT)$, where
$\frakT$ is the structure from the proof of
Theorem~\ref{thm:betterHHSstructure} with the additional ``dummy
domains'' added as provided at the end of that proof via
Theorem~\ref{thm:almost_HHS_are_HHS}.  It remains only to show that
this is a hierarchically hyperbolic group structure.  The action of
$G$ on itself, by left multiplication, is clearly metrically proper
and cobounded, and thus it only remains to show that $\frakT$ contains
finitely many $G$--orbits.  If $U\in\s$ but $U\not\in \frakT$, then
either $\mathbf F_U$ or $\mathbf E_U$ must be bounded.  Then for each
$g\in G$, the same will be true for $\mathbf F_{gU}$ or $\mathbf
E_{gU}$, which shows that $gU\not\in\frakT$.  Thus $G\cdot U\subset
\frakS - \frakT$.  The now 
result follows from the fact that $\s$ has only finitely many 
$G$--orbits and that 
any dummy domains added fall into only finitely many orbits, as noted  
in Remark~\ref{rem:almost_HHGs_are_HHSs}. \end{proof}

\section{Characterization of contracting geodesics} \label{sec:Charact_contractions}

For this section, fix a hierarchically hyperbolic space $(\cuco X,\mathfrak S)$ with the bounded 
domain dichotomy; denote the $\nest$--maximal element $S \in \mathfrak S$.

\begin{defn}[Bounded projections]
Let $\mathcal Y \subset \cuco X$ and $D>0$.  We say that $\mathcal Y$ 
has 
\emph{$D$--bounded projections} if for every $U \in \mathfrak S -  \{S\}$,
we have $d_U(\mathcal Y) < D.$
\end{defn}

\begin{defn}[Contracting]\label{defn:contracting}
A subset $\gamma$ in a metric space $X$ is said to be 
$D$--\emph{contracting} if there exist a map $\pi_{\gamma}\co X
\rightarrow \gamma\subset X$ and constants $A, D>0$ satisfying:
\begin{enumerate}
\item For any $x \in \gamma$, we have $\diam_X(x, \pi_{\gamma}(x))< D$;

\item If $x,y \in X$ with $d_X(x,y) < 1$, then
$\diam_X(\pi_{\gamma}(x), \pi_{\gamma}(y)) < D$;

\item\label{def:contracting:item:contracting} For all $x\in X$, if we
set $R=A \cdot d(x,\gamma)$, then $\diam_{X}(\pi_{\gamma}(B_R(x)))\leq D$.
\end{enumerate}
\end{defn}

In this section, we will focus our attention to the case of 
Definition~\ref{defn:contracting} where $\gamma$ is a quasigeodesic. 
In Section~\ref{sec:stability} we will consider results about 
arbitary subsets with the contracting property.

We note that sometimes authors refer to any quasigeodesic
satisfying~(\ref{def:contracting:item:contracting}) as
\emph{contracting}. Nonetheless, for applications one also 
needs to assume the coarse idempotence and coarse Lipschitz 
properties given by (1) and (2), so for convenience we combine them 
all in one property.

A useful well-known fact is stability of
contracting quasigeodesics. Two different proofs of the following occur as 
special cases of the results  \cite[Lemma 6.1]{MasurMinsky:I} and
\cite[Theorem~6.5]{Behrstock:asymptotic}; this explicit statement 
can also be found in \cite[Section 4]{DurhamTaylor:stable}.

\begin{lem}\label{lem:contracting implies stab}
If $\gamma$ is a $D$--contracting $(K,K)$--quasigeodesic in a metric
space $X$, then $\gamma$ is $D'$--stable for some $D'$ depending only
on $D$ and $K$.
\end{lem}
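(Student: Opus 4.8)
This is a classical fact, and the quickest route is simply to cite it: it appears as a special case of \cite[Lemma~6.1]{MasurMinsky:I} and of \cite[Lemma~6.2]{Behrstock:asymptotic}, and explicitly in \cite[Section~4]{DurhamTaylor:stable}. If one wanted to reproduce the argument, the plan would be the following. Let $\pi_\gamma$, $A$, $D$ be as in Definition~\ref{defn:contracting} (we may assume $A\ge 1$). Given a $(\lambda,\lambda)$--quasigeodesic $q$ with endpoints $x,y$ on $\gamma$, the task is to bound $R:=\sup_{p\in q}\dist(p,\gamma)$ by a function of $\lambda$, $D$, $A$, $K$; the stability gauge $N(\lambda,\lambda)$ is then this bound. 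One works with $q$ as a sequence of points, using property~(2) of the contracting data to control $\pi_\gamma$ across consecutive points, so that no continuity of $q$ is needed.

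The engine is the observation that $\pi_\gamma$ collapses portions of $q$ lying far from $\gamma$: a sub-quasigeodesic of $q$ of length $L$ all of whose points lie at distance $\ge\rho$ from $\gamma$ has $\pi_\gamma$--image of diameter $O(D\lambda L/(A\rho)+D)$, proved by covering its vertex set with $O(L/(A\rho))$ balls of radius $\tfrac12 A\rho$ centred on it and applying Definition~\ref{defn:contracting}(\ref{def:contracting:item:contracting}) to each such ball (its radius being at most $A$ times the distance of its centre from $\gamma$), consecutive balls being arranged to overlap. Since $A\ge 1$, a nearest point of $w$ on $\gamma$ lies in the ball to which property (3) applies, so property~(3) together with property~(1) shows that $\pi_\gamma(w)$ is within $2D$ of any nearest point of $w$ on $\gamma$; thus $\pi_\gamma$ is coarsely nearest--point projection. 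Then I would locate the deepest point $p\in q$, pass to the maximal sub-quasigeodesic $\sigma\subset q$ through $p$ staying at distance $\ge R/2$ from $\gamma$ (this interval is proper since $x,y\in\gamma$), and feed into one another: the engine (bounding $\diam\pi_\gamma(\sigma)$ in terms of the length of $\sigma$ and $R$), the quasigeodesic inequality for $\sigma$ (bounding its length in terms of $R$ and the separation of the nearest points of its endpoints), and the coarse agreement of $\pi_\gamma$ with nearest--point projection. Carrying this out not only for $\sigma$ but across a dyadic sequence of depth scales $R,R/2,R/4,\dots$, and balancing the total length $|q|\le\lambda\,\dist(x,y)+\lambda$ against the bound $\dist(\pi_\gamma x,\pi_\gamma y)\ge\dist(x,y)-2D$ coming from $x,y$ lying on $\gamma$, pins down $R$.

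The two single--scale estimates are routine, but the step I expect to be the real obstacle is the last one: a single deep excursion of $q$ yields no contradiction in isolation (its length is comparable to $R$ both above and below), so one must carefully account for the cumulative ``wasted'' length of all of $q$'s excursions against the length of $q$. This is exactly the bookkeeping carried out in \cite[Lemma~6.1]{MasurMinsky:I}, \cite[Lemma~6.2]{Behrstock:asymptotic} and \cite[Section~4]{DurhamTaylor:stable}, so I would ultimately just invoke those sources.
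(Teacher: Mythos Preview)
Your proposal is correct and matches the paper's treatment exactly: the paper does not prove this lemma at all but simply records it as a well-known fact, citing precisely the three sources you name (\cite[Lemma~6.1]{MasurMinsky:I}, \cite[Lemma~6.2]{Behrstock:asymptotic}, and \cite[Section~4]{DurhamTaylor:stable}). Your additional sketch goes beyond what the paper does; one small quibble is that the assumption $A\ge 1$ goes the wrong way (one may freely \emph{decrease} $A$, not increase it), but since you ultimately defer to the cited sources this is immaterial.
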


The following result and argument both generalize and simplify the analogous 
result for mapping class groups in \cite{Behrstock:asymptotic}.

\begin{thm}\label{thm:char of contracting} Let $(\cuco X, \mathfrak
S)$ be a hierarchically hyperbolic space.  For any $D>0$ and $K\geq 1$
there exists a $D'>0$ depending only on $D$ and $(\cuco X, \mathfrak
S)$ such that the following holds for every 
$(K,K)$--quasigeodesic $\gamma \subset \cuco X$.  If $\gamma$ has $D$--bounded
projections, then $\gamma$ is $D'$--contracting.  Moreover, if $(\cuco
X,\s)$ has the bounded domain dichotomy, then
$\cuco X$
admits a hierarchically hyperbolic structure $(\cuco X, \mathfrak T)$ with unbounded
products where, additionally, we have that if $\gamma$ is
$D$--contracting, then $\gamma$ has $D'$--bounded projections.
\end{thm}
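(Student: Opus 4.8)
The plan is to prove the two halves of the statement separately. For the first (``$D$--bounded projections $\Rightarrow$ $D'$--contracting''), I would define the projection $\pi_\gamma$ to $\gamma$ by composing the gate map onto a hierarchy path shadowing $\gamma$ with a nearest-point projection along that hierarchy path; since $\gamma$ is a $(K,K)$--quasigeodesic and hierarchy paths exist and are uniformly close to it (combining Theorem~\ref{thm:distance_formula} with the existence of hierarchy paths), this is coarsely well-defined. Properties (1) and (2) of Definition~\ref{defn:contracting} are then routine from coarse Lipschitzness of gates and of $\pi_W$. The heart is property~(\ref{def:contracting:item:contracting}): given $x$ with $R = A\cdot\dist(x,\gamma)$, I must bound $\diam(\pi_\gamma(B_R(x)))$. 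Here I would run the distance formula twice: any point $z\in B_R(x)$ and its gate $g(z)$ onto (a hierarchy path through) $\gamma$ agree coarsely in every domain $W$ on which $\gamma$ has large projection; but $\gamma$ has $D$--bounded projections in every $U\neq S$, so for such points the only domain contributing a large term to $\dist(g(z),g(z'))$ is $S$ itself. One then estimates $\dist_S(g(z),g(z'))$ using that $\pi_S=\pi_{\fontact S}$ is coarsely Lipschitz and that $z,z'$ are within $2R$ of each other, and converts this bound in $\fontact S$ back to a bound along $\gamma$ via the fact that $\gamma$ projects to an unparametrized quasigeodesic in $\fontact S$ (again by the hierarchy path property plus bounded projections elsewhere, so that $\dist(a,b)\asymp \dist_{\fontact S}(a,b)$ for $a,b\in\gamma$). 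Choosing $A$ small relative to the quasigeodesic constants makes the detour negligible.

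For the second half, I would first invoke Theorem~\ref{thm:betterHHSstructure} (via Corollary~\ref{cor:betterHHG} in the group case, but the HHS statement suffices here) to replace $(\cuco X,\mathfrak S)$ by a hierarchically hyperbolic structure $(\cuco X,\mathfrak T)$ with unbounded products. Now suppose $\gamma$ is $D$--contracting but, for contradiction, has unbounded projection to some $U\in\mathfrak T\setminus\{S\}$; I may take $U$ with $\dist_U(\gamma)$ as large as I like. Because $(\cuco X,\mathfrak T)$ has unbounded products, $U$ has both $\mathbf F_U$ and $\mathbf E_U$ unbounded, so the product region $\mathbf P_U = \mathbf F_U\times\mathbf E_U$ contains a genuine ``flat-like'' pattern: I can find points far out in the $\mathbf E_U$--direction whose nearest-point projection to $\gamma$ nonetheless stays bounded only if $\gamma$ is not contracting. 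Concretely, choose a subpath $\alpha$ of a hierarchy path for $\gamma$ lying in $\mathcal N_\nu(\mathbf P_U)$ (Proposition~\ref{prop:HHSII5.15}), with $\dist_U$ along $\alpha$ large; then pick a point $p$ obtained from partial realization (Definition~\ref{defn:HHS}.(\ref{item:dfs_partial_realization})) that agrees with the midpoint of $\alpha$ in $\mathbf F_U$ but is pushed distance $\sim t$ into $\mathbf E_U$. On one hand $\dist(p,\gamma)$ is comparable to $t$ (the $\mathbf E_U$--coordinate is a domain on which $\gamma$ is coarsely constant, so it contributes $\sim t$ to the distance formula), so $B_{At}(p)$ reaches all the way back to $\gamma$ and indeed contains points whose gates onto $\gamma$ straddle the whole subpath $\alpha$; on the other hand those gates differ by $\sim\dist_U(\gamma)$ in the domain $U$, hence by the distance formula they are far apart in $\gamma$. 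For $\dist_U(\gamma)$ large enough relative to $D$ and $A$ this contradicts $D$--contraction.

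I expect the main obstacle to be the second half: making the ``flat pattern'' argument fully rigorous requires carefully coordinating three approximations — the hierarchy path shadowing $\gamma$, the gate map onto it, and the partial-realization point $p$ — and checking that the contribution of the $\mathbf E_U$ factor to $\dist(p,\gamma)$ is genuinely linear in $t$ while the contribution of all other domains stays bounded, so that $\dist(p,\gamma)\asymp t$ with controlled constants. The bounded domain dichotomy is what guarantees $\mathbf E_U$ actually has infinite diameter (not merely diameter $>M$), which is essential for sending $t\to\infty$; and the clean-container hypothesis is used only through Theorem~\ref{thm:betterHHSstructure} to secure unbounded products in the first place. Everything else — the coarse Lipschitz and idempotence bounds, the conversions between $\dist$ and $\dist_{\fontact S}$ — is bookkeeping with the distance formula and Lemma~\ref{lem:gate}.
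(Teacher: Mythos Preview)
The first half has a genuine gap. Your argument reduces to bounding $\dist_S(\mathfrak g(z),\mathfrak g(z'))$ for $z,z'\in B_R(x)$, and you propose to do this ``using that $\pi_S$ is coarsely Lipschitz and that $z,z'$ are within $2R$ of each other.'' That only yields $\dist_S(z,z')\lesssim R$, and together with coarse Lipschitzness of nearest-point projection in $\fontact S$ this gives at best $\dist_S(\mathfrak g(z),\mathfrak g(z'))\lesssim R$ --- a bound proportional to $R$, not a uniform one; ``choosing $A$ small'' cannot repair this since $R=A\cdot\dist(x,\gamma)$ is still unbounded. Condition~(\ref{def:contracting:item:contracting}) of Definition~\ref{defn:contracting} demands a uniform bound. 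The difficulty is that $\dist(x,\gamma)$ may be large while $\dist_S(\pi_S(x),\pi_S(\gamma))$ is small (all the distance coming from proper domains), so the contracting property of quasigeodesics in the hyperbolic space $\fontact S$ does not apply directly to $\pi_S(B_R(x))$. The paper's fix is to find a domain $W$ with $\dist_W(x,\mathfrak g(x))>\dist_W(x,y)+L$: when $W=S$ this forces the $\fontact S$-geodesic $[\pi_S(x),\pi_S(y)]$ to avoid $\pi_S(\gamma)$ and hyperbolicity finishes; when $W\neq S$ one invokes the bounded geodesic image axiom to force both $[\pi_S(x),\pi_S(\mathfrak g(x))]$ and $[\pi_S(y),\pi_S(\mathfrak g(y))]$ to pass near $\rho^W_S$, and again hyperbolicity of $\fontact S$ gives the uniform bound. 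Your outline never invokes bounded geodesic image, and without it the step does not close.

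For the second half your route differs from the paper's and has its own loose end. The paper argues via Lemma~\ref{lem:contracting implies stab}: since $\gamma$ is contracting it is stable, so the uniform hierarchy paths $q_i$ between $x_i,y_i\in\gamma$ (which do pass through $\mathbf P_{U_i}$ by Proposition~\ref{prop:HHSII5.15}) must lie in a fixed neighborhood of $\gamma$, forcing arbitrarily long pieces of $\gamma$ itself near nontrivial product regions --- an immediate contradiction to contracting. You instead build a point $p$ pushed distance $t$ into $\mathbf E_U$ and assert $\dist(p,\gamma)\asymp t$ because ``the $\mathbf E_U$-coordinate is a domain on which $\gamma$ is coarsely constant.'' But Proposition~\ref{prop:HHSII5.15} only controls the projections to domains orthogonal to $U$ of the \emph{hierarchy path}, not of $\gamma$; without already knowing $\gamma$ is close to that hierarchy path (which is exactly what stability buys you) you cannot conclude $\gamma$ has bounded projection to those domains, and a $(K,K)$-quasigeodesic can certainly have large projection both to $U$ and to something orthogonal to $U$. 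Routing through stability, as the paper does, is what makes this half clean.
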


\begin{proof}  
    First suppose that $\gamma$ has $D$--bounded projections.  It
    follows immediately from the definition that $\gamma$ is a
    hierarchically quasiconvex subset of $\cuco X$.  Hierarchical
    quasiconvexity is the hypothesis necessary to apply
    \cite[Lemma~5.5]{BehrstockHagenSisto:asdim} (see Lemma~\ref{lem:gate}), which then yields
    existence of a coarsely Lipschitz \emph{gate map} $\mathfrak g
    \co\cuco X\to \gamma$, i.e., for each $x\in \cuco X$, the image
    $\mathfrak g(x)\in\gamma$ has the property that for all
    $U\in\mathfrak S$ the set $\pi_{U}(\mathfrak g(x))$ is a uniformly
    bounded distance from the projection of $\pi_{U}(x)$ to
    $\pi_{U}(\gamma)$.
    
    We will use $\mathfrak g$ as the map to prove $\gamma$ is 
    contracting.  Gate maps satisfy condition (1) of Definition
    \ref{defn:contracting} by definition and condition (2) since they
    are coarsely Lipschitz.  Hence it remains to prove that condition
    (3) of Lemma \ref{lem:contracting implies stab} holds.
          
    Fix a point $x\in\cuco X$ with $\dist_{\cuco
    X}(x,\gamma)\geq B_0$ and let $y\in\cuco X$ be any point with 
    $\dist_{\cuco X}(x,y)<B_{1}\dist_{\cuco X}(x,\gamma)$ for   
    constants $B_{0}$ and $B_{1}$ as determined below.
    
    Since $\mathfrak g$ is a gate map and $\gamma$ has $D$--bounded
    projections, for all $U\in \mathfrak S -\{S\}$ we have $\dist_{
    U}(\mathfrak g(x), \mathfrak g(y))<D$.  Thus, by taking a
    threshold $L$ for the distance formula
    (Theorem~\ref{thm:distance_formula}) larger than $D$, we have
   $$\dist_{\cuco X}(\mathfrak g(x), \mathfrak g(y)) \asymp_{(K,C)} 
   \dist_{S} (\mathfrak g(x), \mathfrak g(y)),$$
   for uniform constants $K,C$.  Thus it suffices
    to prove that $\dist_S(\mathfrak g(x),\mathfrak g(y))$ is bounded 
    by some uniform constant $B_2$. We also choose $L$ 
    to be larger than the constants in
    Definition~\ref{defn:HHS}.(\ref{item:dfs_transversal}).
       
    By Definition~\ref{defn:HHS}.(\ref{item:dfs_curve_complexes}), the
    maps $\pi_{U}$ are Lipschitz with a uniform constant.  Taking
    $B_{0}$ sufficiently large, it follows that there exists
    $U\in\mathfrak S$ so that $\dist_{U}(x, \mathfrak g(x))> L$.  By
    choosing $B_{1}$ to be sufficiently small, and applying the
    distance formula to the pairs $(x,y)$ and $(x,\mathfrak g(x))$,
    the fact that the projections $\pi_U$ are Lipschitz implies that
    the sum of the terms in the distance formula
    associated to $(x,\mathfrak g(x))$ is much greater than the sum of
    those associated to $(x,y)$.  
    Having chosen $B_{1}<\frac{1}{2}$, we have $\sum\dist_U(x,\mathfrak g(x))>2\sum
    \dist_U(x,y)>\sum(\dist_U(x,y)+L)$. Thus, 
    there exists 
    $W\in\mathfrak S$ for which $\dist_W(x,\mathfrak g(x))>
    \dist_W(x,y)+L$.
    
    If $W = S$, then having $\dist_S(x,\mathfrak g(x))> 
    \dist_S(x,y)+L$ (where we enlarge $L$ if necessary) would already 
    show that the $\fontact S$--geodesic between $x$ 
    and $y$ was disjoint from $\pi_{S}(\gamma)$ and then 
    hyperbolicity of $\fontact S$ would yield a uniform bound on the 
    $\dist_S(\mathfrak g(x),\mathfrak g(y))$.
    
    Otherwise, we may assume $W\neq S$.  By the triangle inequality,
    we have $\dist_W(y,\mathfrak g(x))>L$.  Further, since, as
    noted above, the $\fontact W$ projections between $\mathfrak g(x)$
    and $\mathfrak g(y)$ are uniformly bounded, by choosing $B_0$
    large enough and $B_{1}$ small enough, we also have
    $\dist_W(y,\mathfrak g(y))>L$.
    
    By the bounded geodesic image axiom
    (Definition~\ref{defn:HHS}.(\ref{item:dfs:bounded_geodesic_image})),
    any geodesic in $\fontact S$ either has bounded projection to
    $\fontact U$ or satisfies
    $\pi_{S}(\gamma)\cap\neb_E(\rho^U_S)\neq\emptyset$ for any $U \in
    \mathfrak S - \{S\}$.  For any geodesic from $\pi_S(x)$ to
    $\pi_S(\mathfrak g(x))$ (or from $\pi_S(y)$ to $\pi_S(\mathfrak
    g(y)$), the above argument implies that the first condition
    doesn't hold for $W$.  Thus, in both cases, we know that any such geodesic
    must pass uniformly close to $\rho^W_S$.  Hence the hyperbolicity
    of $\fontact S$ implies $\gamma$ is contracting, and the first
    implication holds.
           
    We prove the second implication by contradiction. 
    By Theorem~\ref{thm:betterHHSstructure}, we obtain a new structure 
    $(\cuco X, \mathfrak T)$ which has unbounded products. 
    For every $U\in\frak T - 
    \{S\}$ we have that both $\FU {U}$ and $\EU U$ are unbounded,
    hence every $U\in\frakT - \{S\}$ yields a non-trivial
    product region $\mathbf P_U=\EU {U}\times \FU {U}$ which is uniformly 
    quasi-isometrically embedded in $\cuco X$.  
    
    Suppose $\gamma$ is contracting but
    doesn't have 
    $D$--bounded projections.  Then we obtain a sequence 
    $\{U_{i}\}\in\mathfrak T - \{S\}$ with $\diam(\pi_{\fontact  
    U_{i}}(\gamma))\to\infty$. 
    Thus there is a sequence of pairs of points $x_i, y_i \in \gamma$,
    so that $d_{U_i}(x_i, y_i) \asymp K_i$, with $K_i \rightarrow \infty$.  For each $i$,
    let $q_i$ be a $R$--hierarchy path between $x_i, y_i$.  By
    \cite[Proposition 5.17]{BehrstockHagenSisto:HHS_II}, there exists
    $\nu>0$ depending only on $R$ and $(\cuco X,\mathfrak S)$, such
    that
    $$\diam_{U_i}\left(q_i \cap \mathcal{N}_{\nu}(\PU{U_i})\right) \asymp K_i.$$
    
    Since $\gamma$ is contracting, it is uniformly stable by Lemma
    \ref{lem:contracting implies stab}.  Since $\gamma$ is uniformly 
    stable and the $q_i$ are uniform
    quasigeodesics, it follows 
    that each $q_i$ is contained in a uniform neighborhood of
    $\gamma$.  Hence arbitrarily long segments of $\gamma$ are
    uniformly close to the product regions $\PU{U_i}$. This 
    contradicts the assumption that $\gamma$ is contracting and  
    completes the proof.
    \end{proof}

\section{Universal and largest acylindrical actions} \label{sec:uaa}

The goal of this section is to show that for every hierarchically
hyperbolic group $(G,\s)$ the poset $\mathcal{AH}(G)$ has
a largest element.  Recall that the action associated to such an
element is
necessarily a universal acylindrical action.  

We prove the
following stronger result which, in 
addition to providing new largest and universal acylindrical
actions for cubulated groups, gives a single
construction that recovers all previously known largest and
universal acylindrical actions of finitely presented groups that are
not relatively hyperbolic.

The following is Theorem
\ref{thmi:largestaction} of the introduction: 

\begin{thm} \label{thm:largest}
Every hierarchically hyperbolic group  
admits a largest acylindrical action. 
\end{thm}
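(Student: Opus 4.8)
The plan is to produce the largest action by exhibiting an explicit hyperbolic space on which $(G,\s)$ acts acylindrically, and then showing this action dominates every other cobounded acylindrical action in the poset $\mathcal{AH}(G)$. By Corollary~\ref{cor:betterHHG}, since $(G,\s)$ has clean containers (and every HHG has the bounded domain dichotomy), we obtain a new hierarchically hyperbolic group structure $(G,\frakT)$ with unbounded products. Let $S$ be the $\nest$--maximal element of $\frakT$ and let $\mathcal T_S = \fontact S$ be the associated top-level hyperbolic space, built as the factored space $\widehat{\cuco X}_{\s^M}$ in the proof of Theorem~\ref{thm:betterHHSstructure}. The group $G$ acts on $\mathcal T_S$ by isometries (via the $\Aut(\cuco X,\frakT)$--action, using that $\frakT$ has finitely many $G$--orbits), and the action is cobounded since $\pi_S$ is coarsely surjective. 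The candidate for the largest action is $G \curvearrowright \mathcal T_S$; equivalently, via the Milnor--Schwarz correspondence, the equivalence class $[Y]$ of the associated (typically infinite) generating set.

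First I would verify that $G \curvearrowright \mathcal T_S$ is acylindrical. The key point, which should follow from the structure with unbounded products, is that a point of $\cuco X$ with large $\mathcal T_S$--projection has a coarsely well-defined "history" in the sense that any isometry fixing two far-apart points of $\mathcal T_S$ coarsely up to $\varepsilon$ must preserve a pair of points of $\cuco X$ whose $\mathcal T_S$--distance is controlled; since the original $G$--action on $\cuco X$ is metrically proper and cobounded and $\frakT$ has finitely many orbits, such isometries form a finite set. This is the standard mechanism by which HHGs act acylindrically on $\fontact S$ (cf.\ \cite{BehrstockHagenSisto:HHS_I}), now applied to the modified structure $\frakT$. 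Next, for the universality/largest property, I would take an arbitrary class $[X] \in \mathcal{AH}(G)$ and show $[X] \preceq [Y]$, i.e., $\sup_{y \in Y} |y|_X < \infty$. Equivalently: the orbit map $G \curvearrowright \mathcal T_S$ coarsely Lipschitz-factors through $G \curvearrowright \Gamma(G,X)$. The strategy is to show every generalized loxodromic element of $G$ acts loxodromically on $\mathcal T_S$ — this gives universality — and then bootstrap to the stronger largest statement using the $\AbbottBalasubramanyaOsin$ machinery: a cobounded acylindrical action on a hyperbolic space is dominated by $G\curvearrowright\mathcal T_S$ provided the "$\mathcal T_S$--word length" of each generator of $X$ is bounded, which reduces to controlling how far an element can move a basepoint in $\mathcal T_S$ given that it moves it boundedly in $\Gamma(G,X)$.

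The main obstacle is this last domination step: showing that an element moving a basepoint a bounded amount in \emph{some} acylindrical action $\Gamma(G,X)$ can only move the basepoint a bounded amount in $\mathcal T_S$. The reason the construction of $(G,\frakT)$ matters here — and why the clean-container/unbounded-products hypothesis is doing real work — is that in $\frakT$, every proper domain $U$ has \emph{both} $\FU U$ and $\EU U$ unbounded, so every non-trivial product region $\mathbf P_U$ splits as a genuine product of unbounded factors; by Remark~\ref{products}, an element acting loxodromically in an acylindrical action cannot be conjugate into (a neighborhood of) such a product region in a way that has bounded $\mathcal T_S$--projection, because product-region directions correspond to non-Morse behavior. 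Concretely: if some $g \in G$ had unbounded $\mathcal T_S$--translation but bounded $|g|_X$, one extracts (via the distance formula for $(G,\frakT)$ and the large-links axiom) a domain $U \in \frakT\setminus\{S\}$ along which $\langle g\rangle$--orbits spread out; the coordinate $\EU U$ being unbounded forces an orthogonal unbounded direction, contradicting that $g$ is Morse (hence that $g$ can act loxodromically in the action $\Gamma(G,X)$ unless its $X$--length is large). Packaging this quantitatively over all generators of $X$ — using that there are finitely many $G$--orbits of domains and uniform constants throughout — yields $\sup_{y\in Y}|y|_X<\infty$, hence $[X]\preceq[Y]$ for every $[X]\in\mathcal{AH}(G)$, so $[Y]$ is the largest element. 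I expect the technical heart to be making the "bounded movement in $X$ forces bounded movement in $\mathcal T_S$" estimate uniform across all of $\mathcal{AH}(G)$ simultaneously, for which the acylindricity of $G\curvearrowright\mathcal T_S$ together with the $\AbbottBalasubramanyaOsin$ comparison criterion should suffice.
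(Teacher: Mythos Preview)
Your setup is right: pass to the structure $(G,\frakT)$ with unbounded products via Corollary~\ref{cor:betterHHG}, take $\mathcal T_S$ as the candidate space, and invoke \cite[Theorem~K]{BehrstockHagenSisto:HHS_I} for acylindricity. The problem is in the domination step, where you have the inequality the wrong way around. To show $[Y]$ is largest you need $[X]\preceq[Y]$ for every $[X]\in\mathcal{AH}(G)$, i.e., $\sup_{y\in Y}|y|_X<\infty$: elements that move the basepoint a bounded amount in $\mathcal T_S$ must move it a bounded amount in $\Gamma(G,X)$. You repeatedly argue for the reverse implication (``bounded movement in $X$ forces bounded movement in $\mathcal T_S$''), which would show $[Y]\preceq[X]$ --- the opposite of what is needed. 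Your ``concretely'' paragraph, about a single $g$ with ``unbounded $\mathcal T_S$--translation but bounded $|g|_X$'', is really a universality argument (it shows generalized loxodromics act loxodromically on $\mathcal T_S$), and universality does not imply largest without further input.

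The paper closes this gap by identifying the generating set $Y$ explicitly rather than reasoning element-by-element. Since $\frakT$ has finitely many $G$--orbits, there are finitely many orbit representatives $U_1,\dots,U_n$ among the $\nest$--maximal domains in $\frakT\setminus\{S\}$; let $H_i=\stabilizer(\FU{U_i})$. A Milnor--Schwarz argument gives a $G$--equivariant quasi-isometry $\mathcal T_S\simeq\Gamma(G,F\cup\bigcup_i H_i)$, so up to equivalence $Y=F\cup\bigcup_i H_i$. Now the unbounded-products hypothesis is used in the correct direction: each $H_i$ sits inside the genuine product $\PU{U_i}=\FU{U_i}\times\EU{U_i}$ with both factors unbounded, so by Remark~\ref{products} each $H_i$ is elliptic in \emph{every} acylindrical action of $G$ on a hyperbolic space. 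This is exactly hypothesis~(2) of Proposition~\ref{largest}, and hypothesis~(1) is the acylindricity you already have; that proposition then gives $[X]\preceq[Y]$ for all $[X]$. The point you were missing is that the comparison is made at the level of the \emph{subgroups} $H_i$ (whose ellipticity in all acylindrical actions is what unbounded products buys you), not at the level of individual elements and their translation lengths.
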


Before giving the proof, we record the following result which gives 
a sufficient condition for an action to be largest.  
This result follows directly from the proof of \cite[Theorem~4.13]{AbbottBalasubramanyaOsin:extensions}; we give a 
sketch of the argument here.  Recall that an action $H\curvearrowright S$ is \emph{elliptic} if $H$ has bounded orbits.

\begin{prop}  
    [\cite{AbbottBalasubramanyaOsin:extensions}] \label{largest}
Let $G$ be a group, $\{H_1,\dots,H_n\}$ a finite collection of subgroups of $G$, and $F$ be a finite subset of $G$ such that $F\cup (\bigcup_{i=1}^n H_i)$ generates $G$.  Assume that:
	\begin{enumerate}[(1)]
	\item $\Gamma(G,F\cup(\bigcup_{i=1}^n H_i) )$ is hyperbolic and 
	the action of $G$ on it is acylindrical.
	\item Each $H_i$ is elliptic in every acylindrical action of $G$ on a hyperbolic space.	
	\end{enumerate}
Then $[F\cup(\bigcup_{i=1}^n H_i)]$ is the largest element in  $\mathcal{AH}(G)$.
\end{prop}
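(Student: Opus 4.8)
The plan is to prove Proposition~\ref{largest} by using the criterion of \cite{AbbottBalasubramanyaOsin:extensions} for an element of $\mathcal{AH}(G)$ to dominate all others. Recall that to show $[F\cup(\bigcup_i H_i)]$ is largest, it suffices to show that for \emph{every} equivalence class $[Y]\in\mathcal{AH}(G)$, we have $[Y]\preceq[F\cup(\bigcup_i H_i)]$, i.e., that $\sup_{y\in Y}|y|_{F\cup(\bigcup_i H_i)}<\infty$. The key point is that hypothesis (2) will force each $H_i$, and hence each generator drawn from $\bigcup_i H_i$, to be ``small'' in any acylindrical action, so the word length over $F\cup(\bigcup_i H_i)$ cannot blow up. Hypothesis (1) guarantees that $[F\cup(\bigcup_i H_i)]$ is actually an element of $\mathcal{AH}(G)$ in the first place, so that the statement makes sense.

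Here is the sequence of steps I would carry out. First, fix an arbitrary $[Y]\in\mathcal{AH}(G)$; so $\Gamma(G,Y)$ is hyperbolic and $G\curvearrowright\Gamma(G,Y)$ is acylindrical. Second, apply hypothesis (2): each $H_i$ is elliptic for this action, so $H_i$ has bounded orbits in $\Gamma(G,Y)$; concretely, there is a constant $R_i$ with $|h|_Y\le R_i$ for all $h\in H_i$ (taking the orbit of the identity). Third, since $F$ is finite, $\max_{f\in F}|f|_Y=:R_0<\infty$. Setting $R=\max\{R_0,R_1,\dots,R_n\}$, every element of the generating set $F\cup(\bigcup_i H_i)$ has $Y$--word length at most $R$, which is exactly the statement that $Y\preceq F\cup(\bigcup_i H_i)$, hence $[Y]\preceq[F\cup(\bigcup_i H_i)]$ in $\mathcal{AH}(G)$. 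Fourth, since $[Y]$ was arbitrary, $[F\cup(\bigcup_i H_i)]$ dominates every element of $\mathcal{AH}(G)$; combined with hypothesis (1), which places $[F\cup(\bigcup_i H_i)]$ in $\mathcal{AH}(G)$, this element is the largest.

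The only subtlety — and the step I expect to require the most care — is the passage from ``$H_i$ has bounded orbits'' to ``$|h|_Y$ is uniformly bounded for $h\in H_i$.'' Here one uses that $\Gamma(G,Y)$ is a Cayley graph: the orbit of the vertex $1$ under $G$ is all of $\Gamma(G,Y)$, and $|h|_Y=\dist_Y(1,h\cdot 1)$, so boundedness of the $H_i$--orbit of $1$ is literally a bound on $\{|h|_Y:h\in H_i\}$. One should also remember why ellipticity holds for the model action — this is because, as noted in Remark~\ref{products}, having bounded orbits in one acylindrical action does not automatically follow, so hypothesis (2) is genuinely needed rather than derivable. Finally, a word on why a largest element is unique and is a universal acylindrical action: uniqueness is immediate since a largest element of a poset is unique when it exists, and universality follows because if $g\in G$ is generalized loxodromic, it is loxodromic in some $[Z]\in\mathcal{AH}(G)$, and domination $[Z]\preceq[F\cup(\bigcup_i H_i)]$ together with the fact that loxodromicity is preserved under passing to a dominating (``larger'') action — an element with a quasi-isometrically embedded $\integers$--orbit in $\Gamma(G,Z)$ retains one in the coarsely finer $\Gamma(G,F\cup(\bigcup_i H_i))$ — shows $g$ is loxodromic there as well. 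These last remarks are already recorded in the surrounding text, so for the proof of Proposition~\ref{largest} itself only the first four steps are needed.
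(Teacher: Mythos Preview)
Your proposal is correct and follows essentially the same approach as the paper. The only difference is cosmetic: the paper begins with an arbitrary cobounded acylindrical action $G\curvearrowright S$ on a hyperbolic space, then invokes a Milnor--Schwartz argument to produce a generating set $Z$ (containing $F\cup\bigcup_i H_i$ because the $H_i$--orbits and $F$--orbit of the basepoint are bounded), whereas you work directly with a generating set $Y\in\mathcal{AH}(G)$ and bound $|h|_Y$ for $h\in F\cup\bigcup_i H_i$ --- since $\mathcal{AH}(G)$ is defined via generating sets, your route avoids an unnecessary detour.
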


\begin{proof}
First notice that by assumption (1), $\Gamma(G,F\cup(\bigcup_{i=1}^n
H_i)$ is an element of $\mathcal{AH}(G)$.  Let $G\curvearrowright S$
be a cobounded acylindrical action of $G$ on a hyperbolic space, $S$, and
fix a basepoint $s\in S$.  Then there exists a bounded subspace
$B\subset S$ such that $S\subseteq \bigcup_{g\in G} g\cdot B$.  By
assumption (2), the orbit $H_i\cdot s$ is bounded for all
$i=1,\dots,n$.  Since $|F|<\infty$, we know $\diam(F\cdot s)< \infty$ 
and thus

$$K=\max\{\diam(B),\diam(H_1\cdot
s),\dots,\diam(H_n\cdot s), \diam(F\cdot s)\}$$ is finite. 
Let $C=\{s'\in S\mid \dist(s',s)\leq 3K\}$, and let
$$Z=\{g\in G\mid g\cdot C\cap C\neq \emptyset\}.$$ 

The standard Milnor-Schwartz
Lemma argument shows that $Z$ is an infinite generating set of $G$ and 
there exists a $G$--equivariant quasi-isometry $S\to\Gamma(G,Z)$.  It is
clear that $Z$ contains $F$, as well as $H_i$ for all $i=1,\dots,n$ 
and thus $[Z]\preceq [F\cup(\bigcup_{i=1}^n H_i]$.  The result
follows.
\end{proof}

\begin{proof}[Proof of Theorem~\ref{thm:largest}]
Let $(G,\s)$ be a hierarchically hyperbolic group with finite
generating set $F$.  By 
Corollary \ref{cor:betterHHG}, there is a hierarchically hyperbolic
group structure $(G,\frakT)$ with unbounded products.  Recall that $S$
is the $\nest$--maximal element of $\frakT$ with associated hyperbolic
space $\mathcal T_S$.  The action on $\mathcal T_S$ is acylindrical by
\cite[Theorem~K]{BehrstockHagenSisto:HHS_I}.

Moreover, the action of $G$ on $\mathcal T_S$ is cobounded, so let $B$ be a
fundamental domain for $G\curvearrowright \mathcal T_S$ and $$\mathcal
U=\{U\in \frakT\mid \pi_{S}(\FU U)\subset B \textrm{ and $U$ is
$\nest$--maximal in $\frakT - \{S\}$}\}.$$ Notice that $\mathcal
U$ will contain exactly one representative from each $G$--orbit of domains, and so must be a finite set.  Indeed, for a hierarchically hyperbolic
group, this follows from the fact that the action of $G$ on $\frakT$
is cofinite.  

Let $H_i\leq G$ be the stabilizer of $\mathbf F_{U_i}$ for each $U_i\in \mathcal U$.  By a standard Milnor-Schwartz argument (see \cite{AbbottBalasubramanyaOsin:extensions}
for details) there is a $G$--equivariant quasi-isometry between
$\Gamma(G,F\cup(\bigcup_{i=1}^n H_i))$ and $\mathcal T_S$, where $n=|\mathcal U|$.  Therefore
condition (1) of Proposition \ref{largest} is satisfied.

By definition, each $H_i$ sits inside a non-trivial direct product in
$G$, the product region $\mathbf P_{U_i}$ associated to each $U_i\in\mathcal U$.   It follows that $H_i$ must be elliptic in every acylindrical action of $G$ on a
hyperbolic space (see Remark \ref{products}), satisfying condition (2).

Therefore, by Proposition \ref{largest}, the action is largest.
\end{proof}

    \begin{rem} The proof of Theorem~\ref{thm:largest} can be extended
    to treat a number of groups which are hierarchically hyperbolic
    spaces, but not hierarchically hyperbolic groups.  For example, it
    was shown in \cite[Theorem~10.1]{BehrstockHagenSisto:HHS_II} that
    every fundamental group of a compact $3$--manifold with no Nil or Sol in
    its prime decomposition admits a hierarchically hyperbolic space
    structure, which is constructed by first putting an HHS structure on each geometric piece in the prime decomposition.  However, as explained in
    \cite[Remark~10.2]{BehrstockHagenSisto:HHS_II} it is likely that
    such fundamental groups don't all admit hierarchically hyperbolic group structures.
    Nonetheless, the proof of the above theorem works in this case by
    replacing the use of the fact that the action of $G$ on $\frakT$
    is cofinite, with the fact that for $\pi_1(M)$, the set $\mathcal U$ is
    precisely the set of $\nest$--maximal domains in the
    hierarchically hyperbolic structure on each of the Seifert-fibered
    components of the prime decomposition of $M$, and so is finite.
\end{rem}

\begin{rem} \label{rem:universal}
 There is an instructive direct proof of the universality of the above
 action, using the characterization of contracting quasigeodesics in Section
 \ref{sec:Charact_contractions}, which we now give.  We call an 
 infinite order element \emph{contracting} if its orbit is a 
 contracting quasigeodesic in the Cayley graph. Now, let $g\in G$ be
 an infinite order element and consider the geodesic $\langle
 g\rangle$ in $\Gamma(G,F)$. 

If $\langle g\rangle$ is contracting in $\Gamma(G,F)$, then by Theorem \ref{thm:char of contracting} all proper projections are bounded, and thus by the distance formula, $g$ is loxodromic for the action on $\mathcal T_S$.

If $\langle g\rangle$ is not contracting in $\Gamma(G,F)$, then there
exists some $U\in\frakT$ such that $\pi_U(\langle g\rangle)$ is
unbounded.  Thus for any increasing sequence of constants $(K_i)$ with
$K_i\to\infty$, there are sequences of pairs of points
$x_i,y_i\in\langle g\rangle$ such that $\dist(x_i,y_i)\to\infty $ as
$i\to\infty$ and $\dist_U(x_i,y_i)\geq K_i$.  For each $i$, let
$\gamma_i$ be an $R$--hierarchy path between $x_i$ and $y_i$.  By
definition, $\gamma_i$ is a uniform quasigeodesic.  Then by
\cite[Proposition 5.17]{BehrstockHagenSisto:HHS_II}, there exists
$\nu>0$ depending only on $R$ and $(\X,\frakT)$ such that
$\diam_U(\gamma_i\cap N_\nu(\mathbf P_U))\geq K_i$.  If $g$ is a
generalized loxodromic, then $\langle g\rangle$ is stable, by
\cite{Sisto:qconvex}, and so the subgeodesic $[x_i,y_i]$ stays within
a uniform bounded distance of $\gamma_i$.  Thus arbitrarily long
subgeodesics of $\langle g\rangle$ stay within a uniformly bounded
distance of a product region, $\mathbf P_U$.  This contradicts
$\langle g\rangle$ being Morse, and therefore $g$ is not a generalized
loxodromic element.
\end{rem}

This remark directly implies that the action on $\mathcal T_S$ is a universal 
acylindrical action. (The universality of the action can also be proven using the
classification of elements of $\Aut(\s)$ described in
\cite{DurhamHagenSisto:HHS_boundary}.)

Another immediate consequence of the above remark is the following,
which for hierarchically hyperbolic groups strengthens a result
obtained by combining Osin \cite[Theorem 1.4.(L4)]{Osin:acyl} and
Sisto \cite[Theorem 1]{Sisto:qconvex}, which together prove that a
generalized loxodromic element in an acylindrically hyperbolic group is
quasi-geodesically stable.

\begin{cor} \label{cor:contracting}
    Let  $(G,\s)$ be
    a hierarchically hyperbolic group. An element $g\in G$ is
    generalized loxodromic if and only if $g$ is
    contracting.
\end{cor}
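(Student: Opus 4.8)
The plan is to deduce the Corollary directly from Remark~\ref{rem:universal} together with Theorem~\ref{thm:char of contracting}, in both directions. The key point is that the Remark actually proves the dichotomy needed: for an infinite order element $g\in G$, either $\langle g\rangle$ is contracting in $\Gamma(G,F)$, in which case $g$ acts loxodromically on $\mathcal T_S$ (hence is generalized loxodromic since the action on $\mathcal T_S$ is acylindrical), or $\langle g\rangle$ fails to be contracting, in which case the product-region argument shows $g$ is not Morse and therefore not generalized loxodromic.

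First I would address the forward direction. Suppose $g$ is generalized loxodromic. By Remark~\ref{products} (combining Osin and Sisto), $g$ is then Morse, i.e.\ $\langle g\rangle$ is a stable quasigeodesic in $\Gamma(G,F)$. Now I run the contrapositive of the second half of Remark~\ref{rem:universal}: if $\langle g\rangle$ were not contracting, then passing to the structure $(G,\frak T)$ with unbounded products (Corollary~\ref{cor:betterHHG}), some $U\in\frak T$ would have $\pi_U(\langle g\rangle)$ unbounded, and by \cite[Proposition~8.24]{BehrstockHagenSisto:HHS_II} arbitrarily long subpaths of hierarchy paths joining points of $\langle g\rangle$ would lie deep inside the product region $\mathbf P_U$; stability of $\langle g\rangle$ then forces arbitrarily long subsegments of $\langle g\rangle$ to stay uniformly close to $\mathbf P_U$, contradicting the Morse property. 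Hence $\langle g\rangle$ is contracting.

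For the converse, suppose $\langle g\rangle$ is contracting in $\Gamma(G,F)$. By Theorem~\ref{thm:char of contracting} (in the structure $(G,\frak T)$ with unbounded products), $\langle g\rangle$ has $D'$--bounded projections to every $U\in\frak T\setminus\{S\}$, so by the distance formula (Theorem~\ref{thm:distance_formula}) the orbit map $\integers\to\mathcal T_S$, $n\mapsto g^n\cdot p$, is a quasi-isometric embedding; that is, $g$ acts loxodromically on $\mathcal T_S$. Since the action $G\curvearrowright\mathcal T_S$ is acylindrical by \cite[Theorem~K]{BehrstockHagenSisto:HHS_I}, this exhibits $g$ as a generalized loxodromic element. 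One should also remark that, by the trick of Section~\ref{sec:almost HHS}, the clean containers hypothesis is unnecessary, so the statement holds for all hierarchically hyperbolic groups.

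The main obstacle is really bookkeeping rather than a deep difficulty: one must be careful that "contracting in $\Gamma(G,F)$" is the right notion independent of the choice of finite generating set $F$ (which follows since stability/the contracting property is a quasi-isometry invariant, Lemma~\ref{lem:contracting implies stab} and the surrounding discussion), and that Theorem~\ref{thm:char of contracting} is being applied in the modified structure $(G,\frak T)$ where product regions are genuinely unbounded. The one genuinely substantive input is the implication "not contracting $\Rightarrow$ not Morse," but this is exactly the content already extracted in Remark~\ref{rem:universal}, so the Corollary follows with little additional work.
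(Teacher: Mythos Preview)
Your proposal is correct and follows precisely the paper's approach: the paper states the corollary as an ``immediate consequence'' of Remark~\ref{rem:universal}, and you have simply unpacked that remark in both directions exactly as intended (contracting $\Rightarrow$ bounded projections $\Rightarrow$ loxodromic on $\mathcal T_S$ via the distance formula, hence generalized loxodromic; and not contracting $\Rightarrow$ not Morse via the product-region argument, hence not generalized loxodromic). Your attention to the Section~\ref{sec:almost HHS} trick for dropping clean containers is also in line with how the paper handles the general case.
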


The next result provides information about the partial ordering of
acylindrical actions.  Of the groups listed below, the largest and
universal acylindrical action of the class of special CAT(0) cubical groups is
new; the other cases were recently established to be largest in
\cite{AbbottBalasubramanyaOsin:extensions}.

\begin{cor} \label{cor:largestexamples}
The following groups admit acylindrical actions that are largest (and therefore universal):
\begin{enumerate}[(1)]
\item Hyperbolic groups.
\item Mapping class groups of orientable surfaces of finite type. 
\item Fundamental groups of compact three-manifolds with no Nil or Sol
in their prime decomposition.  
\item Groups that act properly and
cocompactly on a special CAT(0) cube complex, and more generally any 
cubical group which admits a factor system.  This includes right
angled-Artin groups, right-angled Coxeter groups, and many other 
examples as in \cite{HagenSusse:HHScubical}.
\end{enumerate} 
\end{cor}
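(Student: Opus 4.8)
The plan is to deduce this corollary directly from Theorem~\ref{thm:largest}, together with Proposition~\ref{prop:cleancontainers}, which supplies hierarchically hyperbolic structures with clean containers for precisely the four families listed. First I would recall that hyperbolic groups, mapping class groups, and cubical groups---in particular groups acting properly and cocompactly on a proper CAT(0) cube complex, which include right-angled Artin and right-angled Coxeter groups---all carry hierarchically hyperbolic group structures, built in \cite{BehrstockHagenSisto:HHS_I, BehrstockHagenSisto:HHS_II}. By Proposition~\ref{prop:cleancontainers} these structures may be taken to have clean containers, so Theorem~\ref{thm:largest} produces a largest acylindrical action in each case; since a largest action is by definition universal, this settles items (1), (2), and (4).

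For item (3), $\pi_1(M)$ with $M$ having no Nil or Sol piece is known to admit a hierarchically hyperbolic \emph{space} structure with clean containers, but possibly not a hierarchically hyperbolic \emph{group} structure \cite[Theorems~10.1 and~10.2]{BehrstockHagenSisto:HHS_II}. So here I would instead invoke the extension of the proof of Theorem~\ref{thm:largest} described in the remark following that proof: one replaces the use of cofiniteness of the action on the domain set by the observation that the finite collection $\mathcal U$ appearing in Proposition~\ref{largest} is exactly the set of $\nest$--maximal domains coming from the Seifert-fibered components of the prime decomposition of $M$, hence finite. With $\mathcal U$ finite, Proposition~\ref{largest} applies as before: each factor stabilizer $H_i$ sits inside a nontrivial direct product---the product region $\mathbf P_{U_i}$---and is therefore elliptic in every acylindrical action of $\pi_1(M)$ on a hyperbolic space by Remark~\ref{products}, so the action on $\mathcal T_S$ is largest and hence universal.

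I do not expect a genuine obstacle: all the substantive work lives in Sections~\ref{sec:structures} and~\ref{sec:uaa}. The only points demanding care are bookkeeping ones---confirming that Proposition~\ref{prop:cleancontainers} really covers the CAT(0) cubical case of item (4) through the cubical case and \cite{HagenSusse:HHScubical}, and that item (3) genuinely requires, and is covered by, the hierarchically-hyperbolic-space version of the argument rather than Theorem~\ref{thm:largest} as literally stated. Since Proposition~\ref{prop:cleancontainers} already furnishes clean containers for all four families, the clean-container-removal device of Section~\ref{sec:almost HHS} is not needed for this corollary, although one could also route item (3) through Theorem~\ref{thmi:largestaction} in those cases where $\pi_1(M)$ is in fact a hierarchically hyperbolic group.
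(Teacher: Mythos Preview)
Your proposal is correct and follows essentially the same route as the paper: invoke Proposition~\ref{prop:cleancontainers} to obtain clean containers in all four cases, apply Theorem~\ref{thm:largest} directly for (1), (2), (4), and invoke the remark following Theorem~\ref{thm:largest} for (3). The paper's proof includes one additional detail you skip for item~(3): it explicitly verifies the bounded domain dichotomy for $\pi_1(M)$ (needed to run Theorem~\ref{thm:betterHHSstructure} inside the proof of Theorem~\ref{thm:largest}) by passing to a quasi-isometric group $G'$ whose hierarchically hyperbolic structure has all hyperbolic spaces infinite, via \cite[Theorem~10.1, Remark~10.2, Proposition~1.9]{BehrstockHagenSisto:HHS_II}. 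This is a bookkeeping point rather than a gap, but worth making explicit since $\pi_1(M)$ is not an HHG and so the bounded domain dichotomy is not automatic.
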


\begin{proof}
With the exception of (3), by \cite{BehrstockHagenSisto:HHS_I,
BehrstockHagenSisto:HHS_II, HagenSusse:HHScubical} the above are all hierarchically
hyperbolic groups and therefore have
the bounded domain dichotomy. In case (3), where $G$ is the fundamental group of a
compact three-manifold with no Nil or Sol in its prime decomposition, then
while $G$ is not always a hierarchically hyperbolic group, it has a
hierarchically hyperbolic structure $(\X,\s)$.  
To see this, we use the fact that there is a group $G'$ which 
is quasi-isometric to $G$ and has a hierarchically hyperbolic structure 
with all of the associated hyperbolic spaces 
infinite \cite[Theorem~10.1 \& Remark~10.2]{BehrstockHagenSisto:HHS_II}; 
thus by quasi-isometric invariance of hierarchically hyperbolic 
structures \cite[Proposition~1.10]{BehrstockHagenSisto:HHS_II}, 
$G$ does as well. 
Since all of the
associated hyperbolic spaces are infinite, $(\X,\s)$ has
the bounded domain dichotomy, so the result follows.
\end{proof}

We give an explicit description of these actions for each
hierarchically hyperbolic group in the corollary, in the sense that we
describe the set $\W$ of domains which are removed from the standard
hierarchical structure of the group  and whose associated hyperbolic 
space is infinite diameter.  Recall that the space $\mathcal
T_S$ is constructed from $\cuco X$ by coning off all elements of
$\frakT$ which consists of those components of $\s$ whose associated 
product regions have both factors with infinite diameter. Coning off 
all of $\frakT$ yields a space which is is quasi-isometric to the space obtained by just 
coning off $\s - \W$.

\begin{enumerate}[(1)]

\item  Hyperbolic groups have a canonical 
simplest hierarchically hyperbolic group structure given by taking 
$\s=\{S\}$, where $\fontact S$ is the Cayley graph 
of the group with respect to a finite generating set.  For this 
structure, $\W=\emptyset$, and the action on the Cayley graph is 
clearly largest. 

\item For mapping class groups, the natural hierarchically hyperbolic
group structure is $\s$ is the set of homotopy classes of non-trivial  
non-peripheral (possibly disconnected) subsurfaces of the surface; the
maximal element $S$
is the surface itself, and the hyperbolic space $\fontact S$ is
the curve complex.  For this structure $\W=\emptyset$. (Note that to 
form $\frakT$ one must remove the 
nest-maximal collections of disjoint subsurfaces; the 
hyperbolic space associated to each of 
these, except $S$, has finite diameter). Additionally, we emphasize 
that although  
the new hyperbolic space 
$\mathcal T_{S}$ is not $\fontact S$, it is quasi-isometric to 
$\fontact S$, the action on which  is known to be universal.  Universality of this action was shown by Osin in \cite{Osin:acyl}, and follows from results of Masur-Minsky and Bowditch 
\cite{Bowditch:tight,MasurMinsky:I}.  
     
\item If $M$ is a compact $3$--manifold with no Nil or Sol in its prime decomposition and $G=\pi_1M$, then $\W$ is exactly the set of vertex groups in
the prime decomposition that are fundamental groups of hyperbolic
3--manifolds (each of which has exactly one domain in its
hierarchically hyperbolic structure). 

\item If $G$ is a group that acts properly and cocompactly on a
special CAT(0) cube complex $X$, then by \cite[Proposition~B]{BehrstockHagenSisto:HHS_I}, $X$ has
a $G$--equivariant factor system.  This factor system gives a
hierarchically hyperbolic group structure in which $\s$ is the closure under projection of the set of
hyperplanes along with a maximal element $S$, where $\fontact S$
is the contact graph as defined in \cite{Hagen:quasi_arboreal}.  In this structure, $\W$
is the set of indices whose stabilizer in $G$ contains a
power of a rank one element.

In the particular case of right-angled Artin groups, no power of a
rank one element will stabilize a hyperplane, so $\W=\emptyset$.  In
this case, the contact graph $\fontact S$ is quasi-isometric to the
extension graph defined by \cite{KimKoberda:curve_graph}.  That the
action on the extension graph is a universal acylindrical action
follows from the work of \cite{KimKoberda:curve_graph} and the
centralizer theorem for right-angled Artin groups.  This action is
also shown to be largest in
\cite{AbbottBalasubramanyaOsin:extensions}.

\medskip 

We give a concrete example of the situation in the case of a right-angled Coxeter group. 

\begin{exmp}
Let $G$ be the right-angled Coxeter group whose defining graph is a
pentagon.  Then $G=\langle 
a,b,c,d,e\mid[a,b],[b,c],[c,d],[d,e],[a,e], 
a^{2},b^{2},c^{2},d^{2},e^{2}
\rangle$, and the Cayley graph of $G$ is the tiling of the hyperbolic
plane by pentagons.  We consider the dual square complex to this
tiling.
To form the contact graph $\fontact S $, we start with the square
complex and cone off each hyperplane carrier, which is equivalent to
coning off the hyperplane stabilizers in the Cayley graph.  The result
is a quasi-tree.  Thus a fundamental domain for the hierarchically hyperbolic group structure
of $G$ is $\{U_a,U_b,U_c,U_d,U_e,S\}$ where $U_v$ is associated to
the stabilizer of the hyperplane labeled by $v$ and $S$ is associated
to the contact graph described above.

Consider the hyperplane $J_b$ that is labeled by $b$.  Then the stabilizer of $J_b$ is subgroup generated by the star of the vertex $b$, that is $\langle a,b,c\rangle$.  This subgroup contains the infinite order element $ac$.  As $G$ is a hyperbolic group, all infinite order elements are generalized loxodromic, but $ac$ is not loxodromic for the action on the contact graph since its axis lies in a hyperplane stabilizer that has been coned-off.  Thus the action on the contact graph is not universal.

Let $U_b\in \s$ be the element associated to
$\operatorname{Stab}(J_b)$.  Then $\operatorname{Stab}(J_b)=\langle
a,b,c\mid [a,b],[b,c]\rangle \simeq D_\infty\times\mathbb Z/2\mathbb Z
\simeq \FU {U_b}\times \EU {U_b}$ is a product region, and the maximal
orthogonal component $\EU {U_b}$ is bounded.  Thus $U_b\in\W$, as is
$U_v$ for each vertex $v$ of the defining graph.  The contact graph
associated to $(\FU {U_b},\s_{U_b})$ is a line, and the element $ac$
is loxodromic for the action on this space.

Note that once $\W$ has been removed from $\s$, the resulting hierarchically hyperbolic structure is $(G,\{S\})$, the canonical hierarchically hyperbolic structure for a hyperbolic group, in which $\fontact S=\Gamma(G,\{a,b,c,d,e\})$.
\end{exmp}

\end{enumerate}

\section{Characterizing stability}\label{sec:stability}

In this section, we will give several characterizations of stability
which hold in any hierarchically hyperbolic group.  In fact, we will
characterize stable embeddings of geodesic metric spaces into
hierarchically hyperbolic spaces with unbounded products.  One
consequence of this will be a description of points in the Morse boundary
of a proper geodesic hierarchically hyperbolic space with unbounded
products as the subset of the hierarchically hyperbolic boundary 
consisting of points with bounded projections.

\subsection{Stability}

While it is well-known that contracting implies stability 
\cite{Behrstock:asymptotic, DrutuMozesSapir:Divergence, MasurMinsky:I}, the converse
is not true in general.  Nonetheless, in several important classes of 
spaces the converse holds, including in  
hyperbolic spaces, CAT(0) spaces, the mapping class group, and
Teichm\"uller space \cite{Sultan:hypquasiCAT(0), Behrstock:asymptotic,
DurhamTaylor:stable,Minsky:quasiproj}.  We record the following corollary of Theorem~\ref{thm:char of contracting} which gives a 
relationship between stability and contracting 
subsets 
that holds in a fairly general context.

\begin{cor}\label{cor:char of stab 1}
Suppose that $(\cuco X, \mathfrak S)$ has unbounded products, $\cuco Y$ is a hyperbolic metric space, and  $i\colon \cuco Y \to \cuco X$ is a $(K,C)$--quasi-isometric embedding.  Then $i(\cuco Y)$ is $N$--stable if and only if $i(\cuco
Y)$ is $D$--contracting, where $N$ and $D$ determine each other.
\end{cor}

\begin{proof}
    First assume that $i(\cuco Y)$ is $D$--contracting.  Since $i\colon \cuco Y \to \cuco X$ is a $(K,C)$--quasi-isometric embedding, to show that $i(\cuco Y)$ is $N$--stable for some gauge  $N=N(D)$, we need only show that the (quasigeodesic) image $i(\gamma)$ of every geodesic $\gamma$ in $\cuco Y$ is $N(K,C)$--stable.  Since $i(\cuco Y)$ is $D$--contracting and $i(\cuco Y)$ is hyperbolic, $i(\gamma)$ is $D'$--contracting for some $D'$ depending only on $D,K,C,$ and the hyperbolicity constant of $\cuco Y$.  Lemma~\ref{lem:contracting implies stab} shows that $i(\gamma)$ is therefore $N$--stable, with $N$ depending only on $D$, as desired.  (Note that the assumption on unbounded products is not necessary for this implication.)
    
    For the other direction, the 
    fact that $\cuco X$ has unbounded products implies that $i(\cuco Y)$ 
    has bounded projections, since otherwise one could find large 
    segments of quasigeodesics contained inside product regions with unbounded factors, 
    contradicting stability. The result now follows from 
    Theorem \ref{thm:char of contracting}.
\end{proof}

The following provides a general characterization of stability in HHSs, a special case of which is Theorem \ref{thmi:stab}.

\begin{cor}\label{cor:char of stab 2}
Let $i\co\cuco Y \rightarrow \cuco X$ be a quasi-isometric embedding from a metric space into
a hierarchically hyperbolic space $(\cuco X, \mathfrak S)$ with unbounded products.  The following
are equivalent:

\begin{enumerate}
\item\label{cor:stab2:enum1} $i$ is a stable embedding;
\item\label{cor:stab2:enum2} $i(\cuco Y)$ has uniformly bounded projections;
\item\label{cor:stab2:enum3} $\pi_S \circ i\co \cuco Y \rightarrow \fontact S$ is a quasi-isometric embedding.
\end{enumerate}
\end{cor}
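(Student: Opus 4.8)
The plan is to prove the three equivalences in Corollary~\ref{cor:char of stab 2} by combining the characterization of contracting quasigeodesics (Theorem~\ref{thm:char of contracting}), the distance formula (Theorem~\ref{thm:distance_formula}), and Corollary~\ref{cor:char of stab 1}, reducing the general statement about metric spaces $\cuco Y$ to statements about individual quasigeodesics. Throughout, I would work in the hierarchically hyperbolic structure $(\cuco X,\mathfrak S)$ with unbounded products, whose $\nest$--maximal element is $S$.

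First I would show $(\ref{cor:stab2:enum1})\Rightarrow(\ref{cor:stab2:enum2})$. Suppose $i$ is a stable embedding. Stability forces $i$ to be a quasi-isometric embedding (this is built into Definition~\ref{defn:stable embedding}), so $i(\cuco Y)$ is undistorted. For uniformly bounded projections, suppose not: then there is a sequence $U_n\in\mathfrak S\setminus\{S\}$ and points $x_n,y_n\in i(\cuco Y)$ with $\dist_{U_n}(x_n,y_n)\to\infty$. Since $(\cuco X,\mathfrak S)$ has unbounded products, for $U_n$ with $\diam(\fontact{U_n})$ large we have $\diam(\mathbf F_{U_n})=\infty$ and hence $\diam(\mathbf E_{U_n})=\infty$, so $\mathbf P_{U_n}$ is a nontrivial product region quasi-isometrically embedded in $\cuco X$. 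Taking a hierarchy path $\gamma_n$ between $x_n$ and $y_n$ inside $i(\cuco Y)$ (or rather, a quasigeodesic of $\cuco Y$ mapped over, together with a hierarchy path between its endpoints in $\cuco X$), Proposition~\ref{prop:HHSII5.15} (together with \cite[Proposition~8.24]{BehrstockHagenSisto:HHS_II}) shows that a subsegment of $\gamma_n$ of $U_n$--diameter comparable to $\dist_{U_n}(x_n,y_n)$ lies in a bounded neighborhood of $\mathbf P_{U_n}$; this produces, inside the product region, a quasigeodesic between points of $i(\cuco Y)$ that fellow-travels a ``square'' — i.e. one can build a quasigeodesic from $x_n$ to $y_n$ going ``around'' in the $\mathbf E_{U_n}$ direction that escapes every fixed neighborhood of $\gamma_n$ — contradicting the uniform stability of $i$. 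This is the step I expect to be the main obstacle, since it requires carefully using the product structure to manufacture the off-path quasigeodesic and invoking the quantitative statements from \cite{BehrstockHagenSisto:HHS_II}; essentially this is the argument already run in the second half of the proof of Theorem~\ref{thm:char of contracting}, repackaged for an arbitrary stable subset.

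Next, $(\ref{cor:stab2:enum2})\Rightarrow(\ref{cor:stab2:enum3})$. Assume $i(\cuco Y)$ is undistorted with $D$--bounded projections. By the distance formula (Theorem~\ref{thm:distance_formula}) applied with threshold $L>D$, for $x,y\in i(\cuco Y)$ every term $\dist_U(x,y)$ with $U\neq S$ drops out of the sum, so $\dist_{\cuco X}(x,y)\asymp \dist_S(x,y)$. Composing with the quasi-isometric embedding $\cuco Y\to i(\cuco Y)\hookrightarrow\cuco X$ gives that $\pi_S\circ i\co\cuco Y\to\fontact S$ is a quasi-isometric embedding. Finally, $(\ref{cor:stab2:enum3})\Rightarrow(\ref{cor:stab2:enum1})$: if $\pi_S\circ i$ is a quasi-isometric embedding, then for any quasigeodesic $\gamma$ in $\cuco Y$ the image $\pi_S(i(\gamma))$ is an unparametrized quasigeodesic in the hyperbolic space $\fontact S$, hence uniformly stable there; pulling this back through the factor map and using that $i(\cuco Y)$ then automatically has uniformly bounded projections (a quasi-isometric embedding into $\fontact S$ cannot have large projection to any non-maximal $U$, again by the distance formula) shows $i(\gamma)$ is a contracting quasigeodesic in $\cuco X$ by the first implication of Theorem~\ref{thm:char of contracting}, and therefore stable by Lemma~\ref{lem:contracting implies stab}. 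Thus $i$ is a stable embedding, and assembling the three implications completes the cycle. (Alternatively, once $(\ref{cor:stab2:enum1})\Leftrightarrow(\ref{cor:stab2:enum2})$ is in hand, the equivalence with $(\ref{cor:stab2:enum3})$ follows formally from Corollary~\ref{cor:char of stab 1} and the distance formula as above.)
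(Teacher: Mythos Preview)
Your approach is essentially the paper's, just more expansive: the paper proves $(1)\Leftrightarrow(2)$ in one line by citing Corollary~\ref{cor:char of stab 1} (whose proof already contains the product-region contradiction you spell out), and $(2)\Leftrightarrow(3)$ by the distance formula. Your $(1)\Rightarrow(2)$ correctly unpacks that argument, and your $(2)\Rightarrow(3)$ is exactly the paper's.

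The one step to tighten is your direct $(3)\Rightarrow(1)$. The parenthetical claim that ``a quasi-isometric embedding into $\fontact S$ cannot have large projection to any non-maximal $U$, again by the distance formula'' is not justified as written: the distance formula alone does not give this, and in fact from $(3)$ alone you only obtain the \emph{lower} quasi-isometry inequality for $i$ (since $\pi_S$ is coarsely Lipschitz), not the upper one. The paper handles this by explicitly invoking ``the assumption that $i$ is a quasi-isometric embedding'' when proving $(2)\Leftrightarrow(3)$; once you have that, bounded projections follow, but the argument uses bounded geodesic image and consistency (essentially the mechanism in Theorem~\ref{thm:char of contracting}) rather than just the distance formula. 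Your parenthetical alternative route --- establish $(1)\Leftrightarrow(2)$ first, then get $(3)$ from $(2)$ via the distance formula --- is exactly what the paper does and is the cleaner way to close the cycle.
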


\begin{proof}
That item (2) implies (3) 
follows from the
distance formula and the assumption that  $i$ is a quasi-isometric
embedding.

The hypothesis of item (1) implies that $\cuco Y$ is hyperbolic.  Moreover, since (2) implies (3), the hypothesis of (2) also implies that $\cuco Y$ is hyperbolic.  Thus items (1) and (2) are equivalent via Corollary~\ref{cor:char of stab 1} and Theorem~\ref{thm:char of contracting}.  

We now prove that (3) implies (2). Suppose for a contradiction that for any integer $N$ 
there exists $U\in\frakS - \{S\}$ and $x,y\in i(\cuco Y)$ 
satisfying $d_{U}(x,y)>N$. 
Now, we consider a hierarchy path $\gamma$ between $x$ and $y$.
Applying 
the bound geodesic image axiom (Definition~\ref{defn:HHS}.(7)) 
to the associated $\fontact S$--geodesic between $\pi_S \circ 
i(x)$ and $\pi_S \circ i(y)$ it follows that this $\fontact S$--geodesic 
has non-trivial intersection with the 
radius $E$ ball about the point $\rho^{U}_{S}$. Indeed, this yields 
that there exist points $x',y'$ on the geodesic which are both 
distance at most $E$ from $\rho^{U}_{S}$; by 
\cite[Lemma~5.17]{BehrstockHagenSisto:HHS_II} we can assume that $x$ 
and $y$ were chosen so that $x'$ and $y'$ also satisfy $d_{
S}(x,x')<E$ and $d_{S}(y,y')<E$. Thus, we have that 
$d_{S}(x,y)<4E$. The hypothesis in (3) implies that there is a uniform bound on 
$d_{\cuco Y}(x,y)$. The distance formula then implies a uniform bound 
on $d_{W}(x,y)$ for any $W\in\frakS$, contradicting the fact that we 
chose $d_{U}(x,y)$ to be large. 
\end{proof}

\subsection{The Morse boundary}\label{subsec:cobounded boundary}

In the rest of this section, we turn to studying the Morse boundary
and use this to give a bound on the stable asymptotic dimension of a 
hierarchically hyperbolic space.  We begin by describing two notions of boundary.

In \cite{DurhamHagenSisto:HHS_boundary}, Durham, Hagen, and
Sisto introduced a boundary for any hierarchically hyperbolic space.
We collect the relevant properties we need in the following
theorem:

\begin{thm}[Theorem 3.4 and Proposition 5.8 in \cite{DurhamHagenSisto:HHS_boundary}]\label{thm:boundary}
If $(\cuco X, \mathfrak S)$ is a proper hierarchically hyperbolic space, then there exists a topological space $\partial \cuco X$ such that $\partial \cuco X \cup \cuco X = \overline{\cuco X}$ compactifies $\cuco X$, and the action of $\Aut(\cuco X, \mathfrak S)$ on $\cuco X$ extends continuously to an action on $\overline{\cuco X}$.

Moreover, if $\cuco Y$ is a hierarchically quasiconvex subspace of
$\cuco X$, then, with respect to the induced hierarchically hyperbolic
structure on $\cuco Y$, the limit set of $\Lambda \cuco Y$ of $\cuco
Y$ in $\partial \cuco X$ is homeomorphic to $\partial \cuco Y$ and the
inclusion map $i\co \cuco Y \rightarrow \cuco X$ extends continuously an
embedding $\partial i\co \partial \cuco Y \rightarrow \partial \cuco X$.
\end{thm}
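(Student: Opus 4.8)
The plan is to recall the construction of the boundary due to Durham--Hagen--Sisto \cite{DurhamHagenSisto:HHS_boundary}, from which both assertions follow. First I would assemble $\partial\cuco X$ set-theoretically from the Gromov boundaries of the hyperbolic spaces $\fontact U$: a boundary point is a formal convex combination $\sum_{i=1}^{k} a_i p_i$ whose \emph{support} $\{U_1,\dots,U_k\}\subseteq\mathfrak S$ is a pairwise orthogonal family (so $k$ is bounded by the complexity of $(\cuco X,\mathfrak S)$), with $p_i\in\partial\fontact{U_i}$ and $a_i\in(0,1]$, $\sum_i a_i=1$. Next I would topologize $\overline{\cuco X}=\cuco X\cup\partial\cuco X$ by specifying which sequences (and nets) converge to a given $\sum a_i p_i$: roughly, those whose projections to each $U_i$ converge to $p_i$ in $\fontact{U_i}\cup\partial\fontact{U_i}$, whose relative weights across the $U_i$ in the distance formula (Theorem~\ref{thm:distance_formula}) tend to the $a_i$, and for which no domain nested into, containing, or transverse to some $U_i$ carries a competing unbounded contribution; one then checks this is a topology restricting to the metric topology on $\cuco X$.

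To see that $\overline{\cuco X}$ is compact when $\cuco X$ is proper, I would take a sequence in $\cuco X$ and, after passing to a subsequence, arrange that its projection to each domain either stays bounded or converges to a Gromov boundary point, then isolate the domains whose distance-formula contributions grow linearly, use the consistency and transversality axioms together with finite complexity to replace that set by a pairwise orthogonal family, and normalize the growth rates to extract coefficients $a_i$; the resulting convex combination is the limit. Hausdorffness would then follow from the observation that the support, the points $p_i$, and the weights $a_i$ are each uniquely determined by this limiting behavior. For the automorphism action, I would note that $g\in\Aut(\cuco X,\mathfrak S)$ permutes $\mathfrak S$ preserving orthogonality and acts by isometries $\fontact U\to\fontact{g\inducedS(U)}$, hence by homeomorphisms on Gromov boundaries; since the topology on $\overline{\cuco X}$ is built purely from projections and distance-formula data, both of which $g$ respects coarsely, the induced bijection of $\overline{\cuco X}$ is automatically a homeomorphism.

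For the hierarchically quasiconvex case, I would first recall that $\cuco Y$ inherits an HHS structure over the same index set $\mathfrak S$ with hyperbolic spaces $\pi_U(\cuco Y)$, which are quasiconvex in $\fontact U$ and so have Gromov boundaries embedding into $\partial\fontact U$; a boundary point of $\cuco Y$ is therefore a convex combination supported on a pairwise orthogonal family with coordinates in the $\partial\pi_{U_i}(\cuco Y)$, and I would define $\partial i$ to send it to the ``same'' combination regarded in $\partial\cuco X$, which is manifestly injective. To identify $\Lambda\cuco Y$ with $\partial i(\partial\cuco Y)$ I would use the gate map $\mathfrak g\co\cuco X\to\cuco Y$ from Lemma~\ref{lem:gate}, which is coarsely Lipschitz, coarsely the identity on $\cuco Y$, and coarsely intertwines every $\pi_U$: this shows any sequence of $\cuco Y$ converging into $\partial\cuco X$ has its limit supported exactly on domains along which $\cuco Y$ has unbounded projection, and conversely; continuity of $\partial i$ and of its inverse on $\Lambda\cuco Y$ then falls out because on each side the topology is read off from the projections, which agree up to the gate.

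The hard part will be the compactness-and-Hausdorffness step: it forces one to pin down the topology precisely, to carry out a diagonal extraction of convergent subsequences of projections (delicate since the $\fontact U$ need not themselves be proper), and to argue carefully---via the distance formula and finite complexity---that the ``active'' domains can be reorganized into a single orthogonal family with well-defined limiting weights. Closely related, and also subtle, is showing that $\partial i$ is a topological embedding rather than merely an injection; this is exactly the point at which hierarchical quasiconvexity and the gate map are indispensable.
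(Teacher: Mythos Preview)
The paper does not contain a proof of this statement: Theorem~\ref{thm:boundary} is quoted verbatim as Theorem~3.4 and Proposition~5.8 of \cite{DurhamHagenSisto:HHS_boundary} and is used as a black box in Section~\ref{sec:stability}. There is therefore nothing in the present paper to compare your proposal against.

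That said, your sketch is a reasonable outline of the Durham--Hagen--Sisto construction and captures the main ingredients (supports as pairwise orthogonal families, weights coming from relative growth in the distance formula, the role of the gate map for the hierarchically quasiconvex case). If your goal is to reproduce their argument, the genuinely delicate points you flag---pinning down the topology precisely, compactness despite the $\fontact U$ not being proper, and showing $\partial i$ is an embedding rather than just an injection---are indeed where the work lies, and you should consult \cite{DurhamHagenSisto:HHS_boundary} directly for those details. For the purposes of \emph{this} paper, however, no proof is expected: the theorem is simply invoked.
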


Building on ideas in \cite{charneysultan}, Cordes introduced the 
\emph{Morse boundary} of a proper geodesic
metric space
\cite{cordes2015morse}, which was then refined further 
by Cordes--Hume in
\cite{cordes2016stability}. The \emph{Morse boundary} is a stratified boundary which encodes the asymptotic
classes of Morse geodesic rays based at a common point.
Importantly, it is a quasi-isometry invariant and generalizes the
Gromov boundary of a hyperbolic space \cite{cordes2015morse}.

We briefly discuss the construction of the Morse boundary and refer the reader to \cite{cordes2015morse, cordes2016stability} for details.

Consider a a proper geodesic metric space $X$ with a basepoint $e \in 
X$. Given a
stability gauge $N\co\mathbb{R}^2_{\geq 0} \rightarrow
\mathbb{R}_{\geq 0}$, define a subset
$X^{(N)}_e \subset X$ to be the collection of points $y \in X$ such
that $e$ and $y$ can be connected by an $N$--stable geodesic in $X$.
Each such $X^{(N)}_e$ is $\delta_N$--hyperbolic for some $\delta_N>0$
depending on $N$ and $X$ \cite[Proposition 3.2]{cordes2016stability};
here, we use the Gromov product definition of hyperbolicity, as
$X^{(N)}_e$ need not be connected.  Moreover, any stable subset of $X$
embeds in $X^{(N)}_e$ for some $N$ \cite[Theorem
A.V]{cordes2016stability}.

The set of stability gauges admits a partial order: $N_1 < N_2$ if and
only if $N_1(K,C) < N_2(K,C)$ for all constants $K,C$.
In particular, if $N_1 < N_2$, then $X^{(N_1)}_e \subset X^{(N_2)}_e$.

Since each $X^{(N)}_e$ is Gromov hyperbolic, each admits a Gromov
boundary $\partial X^{(N)}_e$.  Take the direct limit with
respect to this partial order to obtain a topological space
$\partial_s X$ called the \emph{Morse boundary} of $X$.

We fix $(\cuco X, \mathfrak S)$, a 
hierarchically hyperbolic structure with unbounded products.

\begin{defn}
We say $\lambda \in\partial \X$ has \emph{bounded projections} if for 
any $e \in \cuco X$, there exists $D>0$ such that any $R$--hierarchy
path $[e,\lambda]$ has $D$--bounded projections.
Let $\partial_{c} \cuco X$ denote the set of
points $\lambda \in \partial \cuco X$ with bounded projections. 
\end{defn}

The boundary $\partial \cuco X$ contains $\partial 
\fontact U$ for each $U\in\frakS$, by construction. The next lemma shows that the 
boundary points with bounded projections are contained in 
$\partial\fontact S$, as a subset of $\partial \cuco X$, where $S$ is 
the $\nest$--maximal element.
In general, the set of  boundary points with bounded projections may be a very small
subset of $\partial\fontact S$.  For instance, in the
boundary of the Teichm\"{u}ller metric, these points are a proper
subset of the uniquely ergodic ending laminations and have measure
zero with respect to any hitting measure of a random walk on the
mapping class group.

\begin{lem} \label{lem:cobdd inside CS}
The inclusion $\partial_{c} \cuco X \subset \partial \fontact S$ holds for any $(\cuco X, \mathfrak S)$ with 
unbounded products where $S$ is the $\nest$--maximal element of 
$\frak S$.  Moreover, if $\cuco X$ is also proper,
then for any $D>0$ there exists $D'>0$ depending only on $D$ and
$(\cuco X, \mathfrak S)$ such that if $(x_n) \subset \cuco X$ is a
sequence with $x_n \rightarrow \lambda \in \partial \cuco X$ such that
$[e,x_n]$ has $D$--bounded projections for some $e \in \cuco X$ and
each $n$, then $[e, \lambda]$ has $D'$--bounded projections.
\end{lem}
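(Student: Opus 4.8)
The plan is to prove the two assertions in sequence, with the second (quantitative, properness) assertion doing most of the work and the first following as a limiting consequence.

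For the inclusion $\partial_{c}\cuco X\subset\partial\fontact S$: suppose $\lambda\in\partial_{c}\cuco X$ and fix a basepoint $e\in\cuco X$, together with the constant $D>0$ and an $R$--hierarchy ray $[e,\lambda)$ with $D$--bounded projections. First I would invoke Theorem~\ref{thm:char of contracting}: since $(\cuco X,\mathfrak S)$ has unbounded products and $[e,\lambda)$ is a $(K,K)$--quasigeodesic with $D$--bounded projections, it is $D'$--contracting for $D'$ depending only on $D$ and $(\cuco X,\mathfrak S)$, and hence (by Lemma~\ref{lem:contracting implies stab}) uniformly stable. A contracting ray is in particular a hierarchically quasiconvex subset whose only relevant domain is $S$, so applying the factor map $\pi_S\colon\cuco X\to\fontact S$ and using the distance formula (Theorem~\ref{thm:distance_formula}) with threshold above $D$, the image $\pi_S([e,\lambda))$ is a quasigeodesic ray in $\fontact S$, with constants depending only on $D$ and $(\cuco X,\mathfrak S)$; this defines a point of $\partial\fontact S$. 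Using the compatibility of the hierarchically hyperbolic boundary of a hierarchically quasiconvex subspace with inclusion (Theorem~\ref{thm:boundary}), together with the fact that $\fontact S$ sits inside $\partial\cuco X$ via $\partial\fontact S\subset\partial\cuco X$ by construction, this point of $\partial\fontact S$ is exactly $\lambda$; thus $\lambda\in\partial\fontact S$.

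For the quantitative statement I would argue as follows. Assume $\cuco X$ is proper, fix $D>0$, and let $(x_n)\subset\cuco X$ converge to $\lambda\in\partial\cuco X$ with each $[e,x_n]$ an $R$--hierarchy path having $D$--bounded projections. By Theorem~\ref{thm:char of contracting} each $[e,x_n]$ is $D'$--contracting with $D'=D'(D,(\cuco X,\mathfrak S))$, and hence $N$--stable for a uniform stability gauge $N$ by Lemma~\ref{lem:contracting implies stab}. Now take any $R$--hierarchy path $[e,\lambda)$ (one exists by properness and the existence of hierarchy paths to boundary points, using a standard Arzel\`a--Ascoli argument on the $[e,x_n]$ after passing to a subsequence); I would in fact choose $[e,\lambda)$ to be a subsequential limit of the $[e,x_n]$. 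Uniform stability of the $[e,x_n]$ passes to the limit, so $[e,\lambda)$ is $N$--stable; moreover, any initial segment of $[e,\lambda)$ lies in a uniform neighborhood of $[e,x_n]$ for all large $n$, so by coarse Lipschitzness of each $\pi_U$ (Definition~\ref{defn:HHS}(\ref{item:dfs_curve_complexes})) the projection $\pi_U([e,\lambda))$ lies in a uniform neighborhood of $\pi_U([e,x_n])$, which has diameter $<D$; adjusting the constant to absorb the Lipschitz and neighborhood errors gives a $D''$ with $d_U([e,\lambda))<D''$ for all non-maximal $U$. Finally, to conclude that \emph{every} $R$--hierarchy path $[e,\lambda)$ has uniformly bounded projections (not just the one we built), I would use that any two $R$--hierarchy paths from $e$ to the same boundary point are uniformly Hausdorff-close on compact initial segments — which again follows from stability of the contracting path just constructed — and then transport the projection bound across as before. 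Set $D'$ to be this final constant.

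The main obstacle is the passage to the limit: one needs to know that a limit of uniformly stable (equivalently uniformly contracting) hierarchy paths is again a hierarchy path with bounded projections, and that the bound on projections is genuinely controlled only by $D$ and $(\cuco X,\mathfrak S)$ and not by $\lambda$ or the sequence. The key point making this work is that stability is a compactness-friendly property and that, for a stable ray, the relevant-domain structure is trivial (only $S$), so the projections to all other domains cannot accumulate; Theorem~\ref{thm:char of contracting} is what converts "bounded projections" into the robust "contracting/stable" form that survives the limit, and then converts back. Care is also needed in choosing the hierarchy path $[e,\lambda)$: rather than taking an arbitrary one, building it as a subsequential limit of the $[e,x_n]$ makes the neighborhood comparison immediate, and then a separate (easier) stability argument upgrades the conclusion to all hierarchy paths to $\lambda$.
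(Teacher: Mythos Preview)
Your argument is correct, but it is considerably more circuitous than the paper's, and the extra machinery buys nothing.

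For the inclusion $\partial_c\cuco X\subset\partial\fontact S$, the paper never invokes Theorem~\ref{thm:char of contracting} or the contracting/stable equivalence. It simply notes that any $R$--hierarchy ray $[e,\lambda]$ has infinite-diameter projection to some $\fontact U$ (citing \cite[Lemma~3.3]{DurhamHagenSisto:HHS_boundary}); since projections to every non-maximal domain are bounded by $D$, this forces $U=S$, and then $\pi_S([e,\lambda])$ is a quasigeodesic ray in $\fontact S$ representing $\lambda$. Your route via ``bounded projections $\Rightarrow$ contracting $\Rightarrow$ stable $\Rightarrow$ $\pi_S$ is a quasi-isometric embedding'' reaches the same endpoint, but the last step is already immediate from the distance formula with threshold above $D$, so the detour through contracting is unnecessary.

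For the quantitative statement, both you and the paper build a subsequential limit $\gamma$ of the $[e,x_n]$ via Arzel\`a--Ascoli and observe that $\gamma$ inherits uniformly bounded projections. The difference is in how an \emph{arbitrary} hierarchy path $[e,\lambda]$ is compared to $\gamma$. You pass through stability of $\gamma$ in $\cuco X$ to conclude Hausdorff closeness. The paper instead works domain-by-domain: since both $\gamma$ and $[e,\lambda]$ are hierarchy paths, their projections to each hyperbolic $\fontact U$ are unparametrized quasigeodesics with the same endpoints, hence uniformly Hausdorff-close in $\fontact U$ by $\delta$--hyperbolicity alone; the projection bound then transfers immediately in each $U$. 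This per-domain argument is more elementary---it uses only the definition of hierarchy path and hyperbolicity of the $\fontact U$, not Theorem~\ref{thm:char of contracting} or Lemma~\ref{lem:contracting implies stab}---and in particular does not need the unbounded-products hypothesis for this half of the lemma.
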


\begin{proof}
Let $\lambda \in \partial_{c} \cuco X$.  If $[e,\lambda]$ is an
$R$--hierarchy path, then $[e,\lambda]$ has an
infinite diameter projection to some $\fontact U$, see, e.g., \cite[Lemma
3.3]{DurhamHagenSisto:HHS_boundary}.  As $\lambda$ has
bounded projections, we must have $U = S$.  Since $\pi_S([e,\lambda])
\subset \fontact S$ is a quasigeodesic ray, the first statement 
follows.

Now suppose that $\cuco X$ is also proper.  For each $n$, let $\gamma_n=[e,x_n]$ be
any $R$--hierarchy path between $e$ and $x_n$ in $\cuco X$.  The Arzela-Ascoli theorem
implies that after passing to a subsequence, 
$\gamma_n$ converges uniformly on compact sets to some $R'$--hierarchy path 
$\gamma$  with $R'$ depending only on $R$ and $(\cuco X, \mathfrak S)$.  Hence $\gamma$ has $D'$--bounded projections
for some $D'$ depending only on $D$ and $(\cuco X, \mathfrak S)$.  Moreover, since $x_n
\rightarrow \lambda$ in $\fontact S$, it follows that $\pi_S(\gamma)$
is asymptotic to $\lambda$ in $\fontact S$.

If $[e,\lambda]$ is any other $R'$--hierarchy path, it follows 
from uniform hyperbolicity of the $\fontact U$ and the definition of
hierarchy paths that 
$d^{Haus}_U(\gamma, [e,\lambda])$ is uniformly bounded for all $U \in \mathfrak S$.  Since $\gamma$ has $D'$--bounded projections, the distance
formula implies that $[e,\lambda]$ has $D''$--bounded projections for some $D''$
depending only on $D $ and $(\cuco X,\mathfrak S)$, as required.
\end{proof}

\subsection{Bounds on stable asymptotic dimension}

The asymptotic dimension of a metric space is a coarse notion of
topological dimension which is invariant under quasi-isometry.
Introduced by Cordes--Hume \cite{cordes2016stability}, the stable
asymptotic dimension of a metric space $X$ is the maximal asymptotic
dimension a stable subspace of $X$.

The stable asymptotic dimension of a metric space $X$ is always
bounded above by its asymptotic dimension.  Behrstock, Hagen, and Sisto 
\cite{BehrstockHagenSisto:asdim} proved that all proper 
hierarchically hyperbolic spaces have
finite asymptotic dimension (and thus 
have finite stable asymptotic dimension, as well).  The bounds on
asymptotic dimension obtained in \cite{BehrstockHagenSisto:asdim} are 
functions of the asymptotic dimension of the top level curve graph. 

In the following theorem, we prove that a hierarchically hyperbolic space $(\cuco X, \mathfrak
S)$ has finite stable asymptotic dimension under the assumption
that $\mathrm{asdim} (\fontact S)< \infty$, where $\fontact S$ is the
hyperbolic space associated to the $\nest$--maximal domain $S$ in
$\mathfrak S$.
 
Recall that asymptotic dimension is monotonic under taking subsets.  Thus, if $\cuco X$ is assumed to be proper, so that $\mathrm{asdim}(\fontact S)<\infty$, then $\cuco X$ (and therefore its stable subsets) have finite asymptotic dimension by \cite{BehrstockHagenSisto:asdim}.  Here, using some geometry of 
stable subsets we obtain a sharper bound on
$\mathrm{asdim}_s (\cuco X)$ than $\mathrm{asdim} (\cuco X)$.


\begin{thm} \label{thm:stab asdim}
Let $(\cuco X, \mathfrak S)$ be a hierarchically hyperbolic space with unbounded products such 
that $\fontact S$ has finite asymptotic dimension, where $S$ is the 
$\nest$--maximal element of $\mathfrak S$.  Then 
$\mathrm{asdim}_s(\cuco X) \leq \mathrm{asdim}(\fontact S)$. 
Moreover, if $\cuco X$ is also proper and geodesic, then there exists a continuous bijection $\widehat{i}\co\partial_s \cuco X \rightarrow \partial_c \cuco X$.
\end{thm}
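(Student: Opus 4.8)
The plan is to treat the two assertions of the theorem separately, in both cases reducing to the characterization of stability in Corollaries~\ref{cor:char of stab 1} and~\ref{cor:char of stab 2} together with standard facts about the hierarchically hyperbolic boundary of \cite{DurhamHagenSisto:HHS_boundary} and the Morse boundary of \cite{cordes2015morse, cordes2016stability}. For the bound on stable asymptotic dimension, let $\cuco Y\subseteq\cuco X$ be a stable subspace. Since $(\cuco X,\mathfrak S)$ has unbounded products, the argument in the proof of Corollary~\ref{cor:char of stab 1} shows that $\cuco Y$ has $D$--bounded projections for some $D>0$. Applying the distance formula (Theorem~\ref{thm:distance_formula}) with a threshold $s>D$, every term indexed by $U\neq S$ vanishes for pairs of points of $\cuco Y$, so $\dist_{\cuco X}(x,y)\asymp_{K,C}\ignore{\dist_S(x,y)}{s}$ for all $x,y\in\cuco Y$; combined with coarse Lipschitzness of $\pi_S$ this shows $\pi_S|_{\cuco Y}\co\cuco Y\to\fontact S$ is a quasi-isometric embedding. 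Hence $\cuco Y$ is quasi-isometric to a subspace of $\fontact S$, so $\mathrm{asdim}(\cuco Y)\leq\mathrm{asdim}(\fontact S)$ by monotonicity and quasi-isometry invariance of asymptotic dimension, and taking the supremum over stable subspaces gives $\mathrm{asdim}_s(\cuco X)\leq\mathrm{asdim}(\fontact S)$.

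Now assume $\cuco X$ is proper and geodesic, fix a basepoint $e$, and construct $\widehat i$. Given $\xi\in\partial_s\cuco X$, represent it by an $N$--stable geodesic ray $r$ from $e$. By the characterization above, $r$ has $D$--bounded projections with $D$ depending only on $N$ and $(\cuco X,\mathfrak S)$; since then $\pi_U\circ r$ is coarsely constant for $U\neq S$ and $\pi_S\circ r$ is a quasigeodesic (as $\pi_S|_r$ is a quasi-isometric embedding), $r$ is, after reparametrization, an $R$--hierarchy ray with $R=R(D)$. Thus $r(n)$ converges to a point $\lambda\in\partial\cuco X$, and by Lemma~\ref{lem:cobdd inside CS} we have $\lambda\in\partial_c\cuco X\subseteq\partial\fontact S$ with $\pi_S\circ r$ converging to $\lambda$ in $\fontact S$; set $\widehat i(\xi)=\lambda$. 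This is well defined since two $N'$--stable rays representing $\xi$ are geodesic rays from $e$ lying in a common hyperbolic space $\cuco X^{(N')}_e$, hence at bounded Hausdorff distance in $\cuco X$, so their $\pi_S$--images are uniform quasigeodesic rays at bounded Hausdorff distance in the hyperbolic space $\fontact S$ and converge to the same point. For injectivity, if $\widehat i(\xi)=\widehat i(\xi')$, choose $N$--stable representatives $r,r'$ from $e$; their $\pi_S$--images converge to the same point of $\partial\fontact S$, hence fellow--travel in $\fontact S$, and since $r,r'$ have $D$--bounded projections and $e\in r\cap r'$, the distance formula (with threshold $>2D$) gives $\dist_{\cuco X}(x,x')\asymp\dist_S(x,x')$ for $x\in r$, $x'\in r'$, forcing $r$ and $r'$ to have bounded Hausdorff distance in $\cuco X$; thus $\xi=\xi'$ in $\partial_s\cuco X$. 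For surjectivity, given $\lambda\in\partial_c\cuco X\subseteq\partial\fontact S$, take a hierarchy ray $q=[e,\lambda]$ (such a ray exists, e.g.\ by \cite{DurhamHagenSisto:HHS_boundary}, since $\lambda\in\partial\fontact S$); by definition of $\partial_c\cuco X$ it has $D$--bounded projections, hence is contracting by Theorem~\ref{thm:char of contracting} and so stable by Lemma~\ref{lem:contracting implies stab}, and an Arzela--Ascoli argument in the proper geodesic space $\cuco X$ produces a stable geodesic ray $r$ from $e$ within bounded Hausdorff distance of $q$, so $\widehat i([r])=\lambda$.

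Finally I would prove continuity. The Morse boundary $\partial_s\cuco X$ carries the direct limit topology of the family $\{\partial\cuco X^{(N)}_e\}_N$, so by the universal property of the direct limit it suffices to check that the restriction of $\widehat i$ to each $\partial\cuco X^{(N)}_e$ is continuous as a map into $\partial_c\cuco X\subseteq\partial\fontact S$. Fix $N$ and suppose $\xi'\to\xi$ in $\partial\cuco X^{(N)}_e$, with $N'$--stable geodesic representatives $r',r$ from $e$ (a single $N'$ depending only on $N$). Convergence in the $\delta_N$--hyperbolic space $\cuco X^{(N)}_e$ means $r'$ and $r$ stay uniformly close in $\cuco X$ over parameter intervals $[0,T]$ with $T\to\infty$. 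All these rays have $D$--bounded projections with $D$ uniform, so $\pi_S$ restricted to each is a quasi-isometric embedding with uniform constants; hence $\pi_S\circ r'$ and $\pi_S\circ r$ are uniform quasigeodesic rays in $\fontact S$ that stay uniformly close over $\fontact S$--arclength tending to infinity, which forces their limits in $\partial\fontact S$ to converge, i.e.\ $\widehat i(\xi')\to\widehat i(\xi)$; since $\partial\fontact S$ is topologically embedded in $\partial\cuco X$ \cite{DurhamHagenSisto:HHS_boundary}, this convergence holds in $\partial_c\cuco X$ as well, and $\widehat i$ is a continuous bijection. I expect this last step to be the main obstacle: it requires handling the direct-limit topology of the Morse boundary and transferring fellow-traveling through $\pi_S$ with uniform constants, whereas the other steps are a fairly direct assembly of the stability characterization with known properties of the two boundaries.
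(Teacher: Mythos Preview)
Your proof is correct and broadly parallel to the paper's, but the second part takes a genuinely different route that is worth noting.

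For the asymptotic-dimension bound, your argument and the paper's are essentially identical: both show that $\pi_S$ restricted to any stable subset is a quasi-isometric embedding (you do this directly for an arbitrary stable $\cuco Y$; the paper routes through the Cordes--Hume strata $\cuco X^{(N)}_e$), and then invoke monotonicity of asymptotic dimension.

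For the map $\widehat i$, the approaches diverge. The paper observes that each stratum $\cuco X^{(N)}_e$ is hierarchically quasiconvex (having bounded projections), and then applies \cite[Proposition~5.8]{DurhamHagenSisto:HHS_boundary} as a black box: this immediately gives continuous injective boundary maps $\widehat i^{(N)}\colon\partial\cuco X^{(N)}_e\hookrightarrow\partial\cuco X$, and the paper simply passes to the direct limit and checks surjectivity by the same Arzel\`a--Ascoli argument you use. Your approach instead builds $\widehat i$ by hand, sending a stable ray $r$ to the $\partial\fontact S$--limit of $\pi_S\circ r$, and you verify well-definedness, injectivity, and continuity explicitly by transferring fellow-travel through the uniform quasi-isometric embedding $\pi_S$. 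This buys a more elementary argument that does not invoke the general boundary-embedding theorem for hierarchically quasiconvex subsets; on the other hand, you still appeal to \cite{DurhamHagenSisto:HHS_boundary} for the fact that $\partial\fontact S$ sits topologically inside $\partial\cuco X$, so the dependence is not entirely removed. One small point to tighten: you assert that $r(n)$ converges in $\overline{\cuco X}$ before invoking Lemma~\ref{lem:cobdd inside CS}, but that lemma \emph{assumes} convergence rather than proving it; you should either argue this directly from the topology of the HHS boundary (which identifies subsequential limits via their $\fontact S$--image), or, more simply, define $\widehat i(\xi)$ from the outset as the $\partial\fontact S$--limit of $\pi_S\circ r$ and avoid the issue.
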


\begin{proof}
By \cite[Lemma 3.6]{cordes2016stability}, for any stability gauge $N$ there 
exists $N'$ such that $\cuco X^{(N)}_e$ is $N'$--stable.  Hence, there 
exists $D'>0$ depending only on $N'$ and $(\cuco X, \mathfrak S)$ 
such that $\cuco X^{(N)}_e$ has $D'$--bounded projections.  By 
Corollary~\ref{cor:char of stab 2}, it follows that the projection
$\pi_S\co\cuco X^{(N)}_e \rightarrow \fontact S$ is a quasi-isometric
embedding with constants depending only on $D'$ and $(\cuco X,
\mathfrak S)$.  Since every stable subset of $\cuco X$ embeds into
some $\cuco X^{(N)}_e$ \cite[Theorem A.V]{cordes2016stability}, the
first conclusion then follows from the definition of stable asymptotic
dimension.

Now suppose that $\cuco X$ is proper.

Since each $\cuco X^{(N)}_e$ is stable in $\cuco X$, these sets have  
bounded projections by 
Corollary~\ref{cor:char of stab 2}; from this it follows that 
$\cuco X^{(N)}_e$ is
hierarchically quasiconvex for each $N$.  Hence by \cite[Proposition~5.8]{DurhamHagenSisto:HHS_boundary}, the canonical embedding
$i^{(N)}\co \cuco X^{(N)}_e \hookrightarrow \cuco X$ extends to an
embedding $\widehat{i}^{(N)}\co \partial \cuco X^{(N)}_e \hookrightarrow
\partial \cuco X$.

By Corollary \ref{cor:char of stab 2} and Lemma \ref{lem:cobdd inside
CS}, we have $\widehat{i}^{(N)}\left(\partial \cuco X^{(N)}_e\right)
\subset \partial_c \cuco X \subset \partial \fontact S$.  Let
$\widehat{i}: \partial_s \cuco X \rightarrow \partial_c \cuco X$ be
the direct limit of the $\widehat{i}^{(N)}$.  Since it is injective on
each stratum, $\widehat{i}$ is injective.

To prove surjectivity, let $\lambda \in \partial_c \cuco X$.  Let $e \in
\cuco X$ and fix a hierarchy path $[e,\lambda]$.  Since $\lambda\in\partial_c\X$,
$[e, \lambda]$ has $D$--bounded projections for some $D>0$.  Let $x_n \in
[e,\lambda]$ be such that $x_n \rightarrow \lambda$ in
$\overline{\cuco X}$.  If $[e,x_n]$ is a sequence of geodesics between
$e$ and $x_n$, then, by properness, the Arzela--Ascoli theorem, and
passing to a subsequence if necessary, there exists a geodesic ray
$\gamma\co[0,\infty) \rightarrow \cuco X$ with $\gamma(0)=e$ such that
$[e,x_n]$ converges on compact sets to $\gamma$.  Since each $[e,x_n]$
has $D$--bounded projections, it follows that $\gamma$ has $D'$--bounded projections for
some $D'$ depending only on $D$ and $(\cuco X, \mathfrak S)$.
Moreover, by hyperbolicity of $\fontact S$ and the construction of 
$\gamma$ we have that  
$d^{Haus}_{\fontact S}(\pi_S(\gamma), [e,\lambda])$ is uniformly 
bounded and thus, by the distance formula, so is 
$d^{Haus}_{\cuco X}(\gamma, [e,\lambda])$.  Since $[(x_n)] = [\gamma]$ by construction, it
follows that $\widehat{i}(\gamma) = \lambda$, as required.

Continuity of $\widehat{i}^{(N)}$ for each $N$ follows from
\cite[Proposition 5.8]{DurhamHagenSisto:HHS_boundary}, as above. 
This and the definition of the direct limit topology implies
continuity of $\widehat{i}$.
\end{proof}

The following corollary is immediate:

\begin{cor} \label{cor:HHGstableasdim}
If $G$ is a hierarchically hyperbolic group, then $G$ has finite stable asymptotic dimension.
\end{cor}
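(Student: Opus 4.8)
The plan is to deduce this directly from Theorem~\ref{thm:stab asdim}, applied to a modified hierarchically hyperbolic structure on $G$. Since $G$ is a hierarchically hyperbolic group it has the bounded domain dichotomy, and by hypothesis it has clean containers, so Corollary~\ref{cor:betterHHG} furnishes a hierarchically hyperbolic group structure $(G,\frakT)$ with unbounded products; write $S$ for its $\nest$--maximal domain and $\mathcal T_S$ for the associated hyperbolic space. Because stable asymptotic dimension is defined intrinsically on the metric space $G$ --- as the supremum of $\mathrm{asdim}$ over stable subsets, a notion invariant under quasi-isometry --- it does not depend on the chosen hierarchically hyperbolic structure, so it suffices to estimate it using $(G,\frakT)$.

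The only non-formal ingredient is the finiteness of $\mathrm{asdim}(\mathcal T_S)$. Recall from the proof of Theorem~\ref{thm:betterHHSstructure} that $\mathcal T_S$ is the factored space $\widehat{G}_{\mathfrak S^{M}}$, and that $\mathfrak S^{M}$ is closed under nesting by Lemma~\ref{lem:closed under nesting}, so $\mathcal T_S$ is itself a hierarchically hyperbolic space (in fact a hyperbolic one, by the argument in that proof). As $G$ is a hierarchically hyperbolic group it is a proper hierarchically hyperbolic space, and the work of Behrstock--Hagen--Sisto \cite{BehrstockHagenSisto:asdim} --- whose proof of finiteness of asymptotic dimension proceeds precisely by controlling the asymptotic dimension of such coned-off (factored) spaces in terms of that of the top-level hyperbolic space together with the complexity --- then yields $\mathrm{asdim}(\mathcal T_S)<\infty$. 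Granting this, Theorem~\ref{thm:stab asdim} applied to $(G,\frakT)$ gives $\mathrm{asdim}_s(G)\le\mathrm{asdim}(\mathcal T_S)<\infty$, as required.

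The step I expect to need genuine care is exactly this finiteness of $\mathrm{asdim}(\mathcal T_S)$: the space $\mathcal T_S$ obtained by coning off product regions need not be proper in general, so one must verify that it nonetheless lies within the scope of the relevant result of \cite{BehrstockHagenSisto:asdim}. This reduces to the hierarchically hyperbolic group setting, where finiteness of asymptotic dimension for the associated factored spaces is available, using that $G$ acts coboundedly on $\mathcal T_S$ with finitely many orbits of domains. Every remaining step is a direct citation of results already established earlier in the paper (Corollary~\ref{cor:betterHHG}, Theorem~\ref{thm:betterHHSstructure}, Lemma~\ref{lem:closed under nesting}, and Theorem~\ref{thm:stab asdim}), which is why the corollary is essentially immediate.
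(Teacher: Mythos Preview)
Your proposal is correct and follows the paper's intended route: the paper states the corollary as immediate from Theorem~\ref{thm:stab asdim}, and your argument supplies exactly the two ingredients needed---the unbounded-products structure from Corollary~\ref{cor:betterHHG} and finiteness of $\mathrm{asdim}(\mathcal T_S)$ via \cite{BehrstockHagenSisto:asdim}. Your caution about properness of $\mathcal T_S$ is well placed but, as you note, is handled in the hierarchically hyperbolic group setting by the results of \cite{BehrstockHagenSisto:asdim} on factored spaces.
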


\subsection{Random subgroups}\label{subsec:random}


Let $G$ be any countable group and $\mu$ a probability measure on $G$
whose support generates a non-elementary semigroup.  A \emph{$k$--generated 
random subgroup of $G$}, denoted $\Gamma(n)$ is defined to be the subgroup 
$\langle w^1_n, w^2_n, \dots, w^k_n\rangle \subset G$
generated by the $n^{th}$ step of $k$ independent random walks on $G$,
where $k \in \mathbb{N}$. For other recent results on the geometry of 
random subgroups of acylindrically hyperbolic groups, see 
\cite{maher2017random}.

Following Taylor-Tiozzo \cite{TaylorTiozzo:randomqi}, we say a
$k$--generated random subgroup $\Gamma(n)$ of $G$ has a property $P$ if
$$\mathbb{P}[\Gamma(n) \indent \mathrm{ has } \indent P] \rightarrow 1
\indent \mathrm{ as } \indent n\rightarrow \infty.$$

\begin{thm}\label{thm:random stable}
Let $(\cuco X,\frakS)$ be an HHS for which   
the 
$\nest$--maximal element, $S$, has $\fontact S$ infinite diameter,  
and consider 
$G<\Aut(\cuco X, \mathfrak S)$ which acts properly and cocompactly 
on~$\cuco X$ via the orbit map. Then any $k$--generated random subgroup of $G$ 
stably embeds in $\cuco X$.
\end{thm}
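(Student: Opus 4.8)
The plan is to combine the structural results of Sections~\ref{sec:structures}--\ref{sec:stability} with the theorem of Taylor--Tiozzo \cite{TaylorTiozzo:randomqi} that random subgroups quasi-isometrically embed into a hyperbolic space on which the ambient group acts acylindrically. First I would pass from $(\cuco X,\frakS)$ to the hierarchically hyperbolic structure $(\cuco X,\frakT)$ with unbounded products furnished by Theorem~\ref{thm:betterHHSstructure} (using the device of Section~\ref{sec:almost HHS} to remove the clean containers hypothesis). Writing $S$ for the $\nest$--maximal element of $\frakT$ and $\mathcal T_S$ for its associated hyperbolic space, I would record three facts about the induced action $G\curvearrowright\mathcal T_S$: it is acylindrical, by \cite[Theorem~K]{BehrstockHagenSisto:HHS_I} applied to $(\cuco X,\frakT)$; it is cobounded, since $G$ acts coboundedly on $\cuco X$ and $\pi_S\co\cuco X\to\mathcal T_S$ is a coarsely $G$--equivariant coarse surjection; and $\mathcal T_S$ has infinite diameter, because it coarsely surjects onto $\fontact S$ (by coning off the remaining product regions), which has infinite diameter by hypothesis. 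Together with the fact that $G$ is finitely generated (it acts properly and coboundedly on a quasigeodesic space), these make $G$ acylindrically hyperbolic with $G\curvearrowright\mathcal T_S$ a non-elementary acylindrical action; the degenerate case in which this action is elementary forces $G$ to be virtually cyclic, where the statement is vacuous.

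Next I would invoke \cite{TaylorTiozzo:randomqi} for the action $G\curvearrowright\mathcal T_S$: with probability tending to $1$ as $n\to\infty$, the $k$--generated random subgroup $\Gamma(n)=\langle w^1_n,\dots,w^k_n\rangle$ is free of rank $k$, is undistorted in $G$, and --- for any fixed basepoint $t\in\mathcal T_S$ --- the orbit map $\Gamma(n)\to\mathcal T_S$ given by $g\mapsto g\cdot t$ is a quasi-isometric embedding, where $\Gamma(n)$ is equipped with the word metric on the generators $w^i_n$.

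Finally I would conclude with Corollary~\ref{cor:char of stab 2}. Fix a basepoint $o\in\cuco X$ and let $j\co G\to\cuco X$, $g\mapsto g\cdot o$, which is a quasi-isometry since the action of $G$ on $\cuco X$ is proper and cobounded; set $i=j|_{\Gamma(n)}\co\Gamma(n)\to\cuco X$. Since $\pi_S$ is coarsely $G$--equivariant, $\pi_S\circ i$ coarsely coincides with the orbit map $\Gamma(n)\to\mathcal T_S$, which by the previous paragraph is a quasi-isometric embedding with probability tending to $1$. Because $\mathcal T_S$ is the hyperbolic space associated to the $\nest$--maximal element of $\frakT$ and $(\cuco X,\frakT)$ has unbounded products, Corollary~\ref{cor:char of stab 2} then shows that condition~(3) there holds for $i$, hence so does condition~(1): $i\co\Gamma(n)\to\cuco X$ is a stable embedding. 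Thus, with probability tending to $1$, the random subgroup $\Gamma(n)$ stably embeds in $\cuco X$, as claimed. The only step requiring care is the first one --- verifying that $G\curvearrowright\mathcal T_S$ is a non-elementary acylindrical action, since this is precisely the input needed to apply \cite{TaylorTiozzo:randomqi}; the remainder is bookkeeping with results already established above.
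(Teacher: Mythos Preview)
Your proposal is correct and follows the same overall arc as the paper --- acylindricity of the top-level action, Taylor--Tiozzo, then the stability characterization --- but you take an unnecessary detour. The paper works directly with the original structure $(\cuco X,\frakS)$: by \cite[Theorem~K]{BehrstockHagenSisto:HHS_I} the action of $G$ on $\fontact S$ is already acylindrical, so Taylor--Tiozzo gives that $\Gamma(n)$ a.a.s.\ quasi-isometrically embeds in $\fontact S$, hence (via the distance formula) has bounded projections in $(\cuco X,\frakS)$. The paper then invokes only the \emph{forward} direction of Theorem~\ref{thm:char of contracting} (bounded projections $\Rightarrow$ contracting), which holds for \emph{any} HHS without any unbounded-products or clean-containers hypothesis, and concludes via Lemma~\ref{lem:contracting implies stab}.

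Your route through the modified structure $(\cuco X,\frakT)$ and Corollary~\ref{cor:char of stab 2} works, but buys nothing here: the reason the paper builds $(\cuco X,\frakT)$ is to obtain the \emph{converse} implication (stable/contracting $\Rightarrow$ bounded projections), which this theorem does not need. So your argument is sound but can be shortened by dropping the passage to unbounded products and using Theorem~\ref{thm:char of contracting} directly on $(\cuco X,\frakS)$.
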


\begin{proof}
    
By \cite[Theorem~K]{BehrstockHagenSisto:HHS_I}, $G$ acts 
acylindrically on $\fontact S$.   
Let $\Gamma(n)$ be generated by $k$ independent random  walks as
above.  Now, \cite[Theorem 1.2]{TaylorTiozzo:randomqi} implies that
$\Gamma(n)$ a.a.s.\ quasi-isometrically embeds into $\fontact S$, and hence $\Gamma(n)$ is hyperbolic.  Moreover, the distance formula implies that $\Gamma(n)$  is undistorted in G and any orbit of $\Gamma(n)$ in $\cuco X$ has bounded projections by the distance formula.
By Theorem~\ref{thm:char of contracting}, 
having bounded projections implies contracting; thus any orbit of $\Gamma(n)$ in
$\cuco X$ is a.a.s.\ contracting, which gives the conclusion by Corollary \ref{cor:char of stab 1}.  (Note that the directions of  Theorem~\ref{thm:char of contracting} and Corollary~\ref{cor:char of stab 1} used here do not require that $(\cuco X,\mathfrak S)$ has unbounded products.)
\end{proof}

In particular, one consequence is a new proof of the 
following result of Maher--Sisto. This result follows from the above, 
together with 
Rank Rigidity for HHG (\cite[Theorem~9.14]
{DurhamHagenSisto:HHS_boundary}) which implies that a hierarchically 
hyperbolic group which is not a direct product of two infinite groups 
has $\fontact S$ infinite diameter.

\begin{cor}[Maher--Sisto; \cite{maher2017random}]\label{cor:random hhg} 
If $G$ is a hierarchically hyperbolic group which is not the direct product of two infinite 
groups, then any $k$--generated random subgroup of $G$ is stable.
\end{cor}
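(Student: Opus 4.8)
The plan is to obtain Corollary~\ref{cor:random hhg} as an essentially immediate consequence of Theorem~\ref{thm:random stable}, so the only real work is to verify that theorem's hypotheses apply to $G$ acting on itself. First I would recall that, by definition of a hierarchically hyperbolic group, there is a hierarchically hyperbolic space $(\cuco X,\frakS)$ and an action $G<\Aut(\cuco X,\frakS)$ whose induced quasi-action on $\cuco X$ is metrically proper and cobounded; in particular $G$ acts properly and cocompactly on the quasigeodesic space $\cuco X$, which is precisely the standing hypothesis of Theorem~\ref{thm:random stable}. Let $S$ denote the $\nest$--maximal element of $\frakS$. The only hypothesis of Theorem~\ref{thm:random stable} that is not automatic for $G$ is that $\fontact S$ has infinite diameter.

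To establish that, I would appeal to Rank Rigidity for hierarchically hyperbolic groups, \cite[Theorem~9.14]{DurhamHagenSisto:HHS_boundary}, which implies that a hierarchically hyperbolic group that is not the direct product of two infinite groups has $\fontact S$ of infinite diameter. (Concretely, if $\fontact S$ were bounded then $G$ would coarsely coincide with a single product region $\mathbf P_U=\FU U\times\EU U$; the bounded domain dichotomy, which every hierarchically hyperbolic group enjoys, would then force both factors to be infinite, and the stabilizers of the two factors would be infinite subgroups exhibiting $G$ as such a direct product.) Since by hypothesis $G$ is not the direct product of two infinite groups, we conclude that $\fontact S$ has infinite diameter, and hence all hypotheses of Theorem~\ref{thm:random stable} are met.

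Theorem~\ref{thm:random stable} then yields that any $k$--generated random subgroup $\Gamma(n)$ of $G$ stably embeds in $\cuco X$. To finish I would translate this from $\cuco X$ back to $G$: by the Milnor--Schwarz lemma, $G$ with any word metric is quasi-isometric to $\cuco X$; stability of a subset is a quasi-isometry invariant; and $\Gamma(n)$ is $k$--generated, so an orbit map $\Gamma(n)\to\Gamma(G,F)$ agrees up to bounded error with the stable embedding $\Gamma(n)\to\cuco X$. Therefore $\Gamma(n)$ is a stable subgroup of $G$, which is the assertion of the corollary. I expect the only genuine obstacle to be the correct invocation of rank rigidity --- namely citing precisely the dichotomy it provides and checking that the ``bounded $\fontact S$'' alternative really does force $G$ to be a direct product of two infinite groups; the remaining steps are routine unwinding of the definitions of stable embedding and stable subgroup, together with quasi-isometry invariance of stability.
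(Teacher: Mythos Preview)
Your proposal is correct and follows essentially the same approach as the paper: invoke Rank Rigidity \cite[Theorem~9.14]{DurhamHagenSisto:HHS_boundary} to verify that $\fontact S$ has infinite diameter, then apply Theorem~\ref{thm:random stable}. The only difference is that you spell out the routine translation from ``stably embeds in $\cuco X$'' to ``stable subgroup of $G$'' via Milnor--Schwarz and quasi-isometry invariance of stability, which the paper leaves implicit.
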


\section{Clean containers}

The clean container property is a condition related to the 
orthogonality axiom. In Proposition~\ref{prop:cleancontainers} this
property is shown to hold for many groups, though it does not hold for all
groups.  Unlike earlier versions of this paper, this condition is no 
longer needed to prove the main theorems of the earlier sections. 
However, we keep the content in this paper since this property has found 
independent interest and is used elsewhere.

\begin{defn}[Clean containers]\label{defn:clean containers}
In a hierarchically hyperbolic space $(\X,\s)$ for each $T\in\s$ and
each $U\in\s_T$ with $\{V\in\s_T\mid V\perp U\}\neq\emptyset$ the 
orthogonality axiom provides a container. If, for each $U$, such a 
container can be chosen to be orthogonal to $U$, then we say that 
$(\X,\s)$ has \emph{clean containers}.
\end{defn}

We first describe some interesting examples with clean containers. 
Then we show that this property is preserved under some 
combination theorems for hierarchically hyperbolic
spaces.  We refer the reader to \cite[Sections~8 \&
9]{BehrstockHagenSisto:HHS_II} and
\cite[Section~6]{BehrstockHagenSisto:asdim} for details on the 
structure in the new spaces.

\begin{prop}  \label{prop:cleancontainers}
The following spaces admit hierarchically hyperbolic structures with clean containers.
\begin{enumerate}
\item Hyperbolic groups
\item Mapping class groups of orientable surfaces of finite type
\item Special cubical groups, and more generally, any cubical group 
which admits a factor system.
\item $\pi_1(M)$, for $M$ a compact $3$--manifold with no Nil or Sol in its prime decomposition.
\end{enumerate}
\end{prop}

\begin{proof}
    Hierarchically hyperbolic structures for these spaces were 
    constructed in \cite{BehrstockHagenSisto:HHS_I} and \cite{BehrstockHagenSisto:HHS_II}.
\begin{enumerate}
\item The statement is immediate for hyperbolic groups, as they all
admit hierarchically hyperbolic structure with no orthogonality, and
thus the container axiom is vacuous.  

\item For mapping class groups,
in the standard structure, a container for domains orthogonal to a
given subsurface $U$ is the complementary
subsurface, which is orthogonal to $U$.

\item The statement follows immediately from 
\cite[Proposition~B]{BehrstockHagenSisto:HHS_I} and \cite[Corollary~3.4]{HagenSusse:HHScubical}.

\item Given a geometric 3--manifold $M$ of the above form, $\pi_1(M)$
is quasi-isometric to a (possibly degenerate) product of hyperbolic
spaces, and so has clean containers by Proposition \ref{cleanproducts}.
Given an irreducible non-geometric graph manifold $M$, the
hierarchically hyperbolic structure comes from considering $\pi_1(M)$
as a tree of hierarchically hyperbolic spaces with clean containers
and hence has clean containers by Proposition~\ref{cleantrees}.  Finally,
the general case of a non-geometric $3$--manifold $M$ follows
immediately from Proposition \ref{cleanrelhyp} and the fact that $\pi_1(M)$
is hyperbolic relative to its maximal graph manifold subgroups.
\end{enumerate}
\end{proof}

\begin{prop} 
\label{cleanproducts}
The product of two hierarchically hyperbolic spaces which both have clean containers has clean containers.
\end{prop}

\begin{proof}
Let $(\X_{0},\s_0)$ and $(\X_{1},\s_1)$ be hierarchically hyperbolic
spaces with clean containers.  In the hierarchically hyperbolic
structure $(\X_0\times\X_1,\s)$ given by \cite[Theorem~8.27]{BehrstockHagenSisto:HHS_II} there are two types of containers,
those that come from one of the original structures and those that do
not.  Containers of the first type are clean, as both original
structures have clean containers.

The second type of domain consists of new domains obtained as 
follows. Given a domain $U\in\s_i$, a new domain $V_U$ is defined 
with the property that it contains under nesting any domain in $\s_i$ 
which is orthogonal to $U$ and also any domain in $\s_{i+1}$. Thus, 
by construction $V_U$ is a container for everything orthogonal to $U$.  As $V_U\perp U$, the result follows. 
\end{proof}

\begin{prop} \label{cleanrelhyp}
If $G$ is hyperbolic relative to a collection of hierarchically
hyperbolic spaces which all have clean containers, then $G$ is a
hierarchically hyperbolic space with clean containers.
\end{prop}

\begin{proof}
That $G$ is a hierarchically hyperbolic space follows from
\cite[Theorem 9.1]{BehrstockHagenSisto:HHS_II}.  In the hierarchically
hyperbolic structure on $G$, no new orthogonality relations are
introduced, and thus all containers are containers in the
hierarchically hyperbolic structure of one of the peripheral
subgroups.  As each of these structures have clean containers, it
follows that $G$ does, as well.
\end{proof}

The following example relies on the 
combination theorem \cite[Theorem~8.6]{BehrstockHagenSisto:HHS_II}. We provide this as another example 
of hierarchically hyperbolic spaces with clean containers, but since 
we don't rely on this elsewhere in the paper, we refer to that 
reference for the relevant definitions. Nonetheless, we include a full proof 
for the expert, since it is short. (We note that after this paper was 
circulated, Berlai and Robbio proved a combination theorem under 
weaker conditions than \cite[Theorem~8.6]{BehrstockHagenSisto:HHS_II} 
and, in the process, also proved that if all the vertex spaces have 
clean containers, then so does the combined space, see \cite[Theorem~A]{BerlaiRobbio:combinationHHG}.)

\begin{prop} \label{cleantrees} Let $\mathcal T$ be a tree of hierarchically hyperbolic spaces satisfying the hypotheses of \cite[Theorem~8.6]{BehrstockHagenSisto:HHS_II}, so that $X(\mathcal T)$ is hierarchically hyperbolic.  If for each $v\in T$, the hierarchically hyperbolic space $(\mathcal X_v,\s_v)$ has clean containers, then so does $X(\mathcal T)$.
\end{prop}

\begin{proof}
This follows immediately from the proof of \cite[Theorem~8.6]{BehrstockHagenSisto:HHS_II} and the fact that
edge-hieromorphisms are full and preserve orthogonality. In the 
notation from that result, we note that, if
$\s_v$ has clean containers for each $v\in T$, then the domain
$A_v\in\s_v$ described in the proof also has the property that
$A_v\perp U_v$.  Therefore, as edge-hieromorphisms are full and
preserve orthogonality, $[A_v]\perp [U]$.
\end{proof}

The following uses the notion of \emph{hierarchically hyperbolically embedded 
subgroups} introduced in 
\cite{BehrstockHagenSisto:asdim}; see also \cite{DGO} for the related notion of hyperbolically embedded subgroups.

\begin{prop}
Let $(G,\s)$ be a hierarchically hyperbolic group with clean
containers, and let $H$ be a hierarchically hyperbolically embedded subgroup of $(G,\s)$.  Then there exists
a finite set $F\subset H-\{1\}$ such that for all $N\triangleleft H$
with $F\cap N=\emptyset$ and $H/N$ is hyperbolic,  the group
$G/\hat N$, obtained by quotienting by the normal closure, is a hierarchically hyperbolic group with clean containers.
\end{prop}

\begin{proof}
Recall that in the hierarchically hyperbolic structure $(G/\hat
N,\s_N)$ obtained in 
\cite[Theorem~6.2]{BehrstockHagenSisto:asdim} (and in the 
notation used there), two domains $\bf U,\bf V\in\s_N$ satisfy $\bf U\nest\bf V$
(respectively $\bf U\perp\bf V$) if there exists a linked pair
$\{U,V\}$ with $U\in\bf U$ and $V\in\bf V$ such that $U\nest V$
(respectively $U\perp V$).  Let $\bf T\in\s_N$ and $\bf U\in(\s_N)_T$
with $\mathcal V=\{\bf V\in\s_T\mid \bf V\perp \bf U\}\neq \emptyset$.
To prove the container axiom, we consider domains $T,U,V\in\s$ such
that $T\in\bf T$, $U\in\bf U$ and $V\in\bf V$ for all $\bf
V\in\mathcal V$, and such that any pair is a linked pair.  Then the
orthogonality axiom for $(G,\s)$ provides a domain $W$ such that
$W\sqsupseteq V$ and $W\nest T$.  As $(G,\s)$ has clean containers, we
also have that $W\perp U$.  This implies that $\rho^U_S$ and
$\rho^W_S$ are coarsely equal by \cite[Lemma 1.5]
{DurhamHagenSisto:HHS_boundary}, and so $\{U,W\}$ is a linked
pair.  Therefore, $\bf W\perp\bf U$.
\end{proof}


\theoremstyle{plain}
\newtheorem{thmappend}{Theorem}
\newtheorem{lemappend}[thmappend]{Lemma}
\newtheorem{claimappend}[thmappend]{Claim}
\newtheorem{corappend}[thmappend]{Corollary}

\theoremstyle{definition}
\newtheorem{defnappend}[thmappend]{Definition}
\newtheorem{remappend}[thmappend]{Remark}

\renewcommand*{\thethmappend}{A.\arabic{thmappend}}

\appendix
\section{Almost HHSs are HHSs. \\ {\small By Daniel Berlyne and Jacob Russell}}

The main result in this appendix is that every almost HHS structure can be promoted to an HHS
structure. Recall that, as introduced in Section~\ref{subsec:almost 
HHS}, an almost HHS is a space which satisfies all the axioms of 
an HHS except for the orthogonality axiom, which is instead replaced 
by a weaker axiom without a container requirement. In 
Theorem~\ref{thm:almost_HHS_are_HHS}, we show that  an almost HHS 
structure can be made into an actual HHS structure by adding 
appropriately chosen ``dummy domains'' to serve as the containers.
This result provides a useful method for producing an HHS structure 
while only needing to verify the weaker axioms of an almost HHS. This 
method is used in the main text in the proof of Theorem \ref{thm:betterHHSstructure}, where it is shown that every hierarchically hyperbolic space with the bounded domain
dichotomy admits an HHS structure with unbounded products.

\begin{thmappend}\label{thm:almost_HHS_are_HHS}
	Let $(\mathcal{X},\mathfrak{S})$ be an almost HHS. There exists 
	an HHS structure $\mathfrak{R}$ for $\mathcal{X}$ so that $\mathfrak{S}\subseteq \mathfrak{R}$, and if $W \in \mathfrak{R} - \mathfrak{S}$ then the associated hyperbolic space for $W$ is a single point.
\end{thmappend}

To prove Theorem \ref{thm:almost_HHS_are_HHS}, we will need to collect  three additional tools about almost HHSs. Each of these tools was proved in the setting of hierarchically hyperbolic spaces, but they continue to hold in the almost HHS setting. Indeed, the only use of the containers in their proofs is \cite[Lemma 2.1]{BehrstockHagenSisto:HHS_II}, which proves that the cardinality of any collection of pairwise orthogonal domains is uniformly bounded by the complexity of the HHS.

The first  tool says the relative projections of orthogonal domains coarsely coincide. Note, $\rho_Q^W$ and $\rho_Q^V$ are both defined when $W \trans Q$ or $W \propnest Q$ and $V \trans Q$ or $V \propnest Q$.

\begin{lemappend}[{\cite[Lemma 1.5]{DurhamHagenSisto:HHS_boundary}}]\label{lem:orthongality_and_rel_projections}
	Let $(\mathcal{X},\mathfrak{S})$ be an almost HHS. If $W,V\in \mathfrak{S}$  with $W \perp V$, and $Q \in \mathfrak{S}$ with $\rho_Q^W$ and $\rho_Q^V$ both defined, then $\dist_Q(\rho_Q^W,\rho_Q^V) \leq 2\kappa_0$ where $\kappa_0$ is the constant from the consistency axiom of $\mathfrak{S}$.
\end{lemappend}

The second tool we will need is the realization theorem for almost
HHSs.  The realization theorem characterizes which tuples in the
product $\prod_{V \in \mathfrak{S}} \fontact{V}$ are coarsely the
image of a point in $\mathcal{X}$.  Essentially, it says if a tuple
$(b_V) \in \prod_{V \in \mathfrak{S}} \fontact{V}$ satisfies the
consistency inequalities of an almost HHS (see 
Definition~\ref{defn:consistent_tuple}), then there exists  a point $x
\in \mathcal{X}$ such that $\pi_V(x)$ is uniformly close to $b_V$ for
each $V\in \mathfrak{S}$.



\begin{thmappend}[The realization of consistent tuples, {\cite[Theorem 3.1]{BehrstockHagenSisto:HHS_II}}]\label{thm:realization}
	Let $(\mathcal{X},\mathfrak{S})$ be an almost HHS. There exists a function $\tau\colon [0,\infty) \to [0,\infty)$ so that if $(b_V)_{V \in \mathfrak{S}}$ is a $\kappa$--consistent tuple, then there exists $x \in \mathcal{X}$ so that $\dist_V(x,b_V) \leq \tau(\kappa)$ for all $V \in \mathfrak{S}$.
\end{thmappend}

The last result we need is that the relative projections of an almost HHS also satisfy the inequalities in the consistency axiom.

\begin{lemappend}[$\rho$--consistency, {\cite[Proposition 1.8]{BehrstockHagenSisto:HHS_II}}]\label{lem:rho_consistency}
	Let $\mathfrak{S}$ be an almost HHS structure for $\mathcal{X}$ and $V,W,Q \in \mathfrak{S}$. Suppose $W \trans Q$ or $W \propnest Q$ and $W \trans V$ or $W \propnest V$. Then we have the following, where $\kappa_0$ is the constant from the consistency axiom of $(\mathcal{X}, \mathfrak{S})$.
	\begin{enumerate}
		\item If $Q \trans V$, then $\min\{\dist_Q(\rho_Q^W,\rho_Q^V), \dist_V(\rho_V^Q,\rho_V^W) \} \leq 2\kappa_0$.
		\item If $Q \nest V$, then $\min\{\dist_V(\rho_V^Q,\rho_V^W), \diam(\rho_Q^W\cup\rho_Q^V(\rho_V^W)) \} \leq 2\kappa_0$.
	\end{enumerate}
\end{lemappend}

We are now ready to prove  that every almost HHS is an HHS (Theorem \ref{thm:almost_HHS_are_HHS}). If $(\mathcal{X},\mathfrak{S})$ is an almost HHS, then the only HHS axiom that is not satisfied is the container requirement of the orthogonality axiom. The most obvious way to address this is to add an extra element to $\mathfrak{S}$ every time we need a container. That is, if $V,W \in \mathfrak{S}$ with $V \nest W$ and there exists some $Q \nest W$ with $Q \perp V$, then we add a domain $D_W^V$ to serve as the container for $V$ in $W$, i.e., every $Q$ nested into $W$ and orthogonal to $V$ will be nested into $D_W^V$. However, this approach is perilous as once a domain $Q$ is nested into $D_W^V$, we may now need a container for $Q$ in $D_W^V$! To avoid this, we add domains $D_W^\mathcal{V}$ where $\mathcal{V}$ is a pairwise orthogonal set of domains nested into $W$; that is, $D_W^\mathcal{V}$ contains all domains $Q$ that are nested into $W$ and orthogonal to all $V \in \mathcal{V}$. This allows for all the needed containers to be added at once, avoiding an iterative process.

\begin{proof}[Proof of Theorem \ref{thm:almost_HHS_are_HHS}]
	Let $(\mathcal{X},\mathfrak{S})$ be an almost HHS and let $E \geq 0$ be the maximum of all the constants in $\mathfrak{S}$.
	Let $\mathcal{V}$ denote a non-empty set of pairwise orthogonal elements of $\mathfrak{S}$ and let $W \in\mathfrak{S}$. We say the pair $(W,\mathcal{V})$ is a \emph{container pair} if the following are satisfied:
	\begin{itemize}
		\item for all $V \in \mathcal{V}$, $V\nest W$;
		\item there exists $Q \nest W$ such that $Q \perp V$ for all $V \in \mathcal{V}$.
	\end{itemize}
	Let $\mathfrak{D}$ denote  the set of all container pairs. We will denote a pair $(W,\mathcal{V}) \in \mathfrak{D}$ by $D_W^\mathcal{V}$. 
	
	Let $\mathfrak{R} = \mathfrak{S} \cup \mathfrak{D}$. We will prove $\mathcal{X}$ has a hierarchically hyperbolic space structure with index set $\mathfrak{R}$.  Since $(\mathcal{X},\mathfrak{S})$ is an almost HHS, we can continue to use the spaces, projections, and relations for elements of $\mathfrak{S}$. Thus we only define new projections, relative projections, and relations when elements of $\mathfrak{D}$ are involved. If $D_W^\mathcal{V} \in \mathfrak{D}$, then the associated hyperbolic space, $\fontact{D_W^\mathcal{V}}$, will be a single point.

		\textbf{Projections:} For $D_W^\mathcal{V} \in \mathfrak{D}$, the projection map is just the constant map to the single point in $\fontact{D_W^\mathcal{V}}$.
		
		\textbf{Nesting:} Let  $Q \in \mathfrak{S}$ and  $D_W^\mathcal{V}, D_T^\mathcal{R}\in \mathfrak{D}$. 	
		\begin{itemize}
			\item     Define $Q \nest D_W^\mathcal{V}$ if $Q \nest W$ in $\mathfrak{S}$ and $Q \perp V$ for all $V \in \mathcal{V}$. 
			\item  Define  $D_W^\mathcal{V} \nest Q$ if  $W \nest Q$ in $\mathfrak{S}$. 
			\item  Define $D_W^\mathcal{V} \nest D_T^\mathcal{R}$ if $W \nest T$ in $\mathfrak{S}$  and for all $R \in\mathcal{R}$ either $R \perp W$ or there exists $V \in \mathcal{V}$ with $R \nest V$.
		\end{itemize}
		
		These definitions ensure $\nest$ is still a partial order and maintain the $\nest$--maximal element of $\mathfrak{S}$ as  the $\nest$--maximal element of $\mathfrak{R}$.

		Since the hyperbolic spaces associated to elements of $\mathfrak{D}$ are points, define  $\rho^Q_{D_W^\mathcal{V}} = \fontact{D_W^\mathcal{V}}$ for every $Q \in \mathfrak{R}$ and $D_W^\mathcal{V} \in \mathfrak{D}$ with $Q \propnest D_W^\mathcal{V}$. The downwards relative projection $\rho_Q^{D_W^{\mathcal{V}}} \colon \fontact D_W^{\mathcal{V}} \to \fontact Q$ can be defined arbitrarily.
		
		If $D_W^\mathcal{V} \in \mathfrak{D}$ and $Q \in\mathfrak{S}$ with $D_W^\mathcal{V} \nest Q$, then $V \propnest Q$ in $\mathfrak{S}$ for each $V \in \mathcal{V}$. Thus we define $\rho_Q^{D_W^\mathcal{V}} = \bigcup_{V \in \mathcal{V}} \rho_Q^V$. Lemma \ref{lem:orthongality_and_rel_projections} ensures that $\rho_Q^{D_W^\mathcal{V}}$ has diameter at most $4E$. In this case, we define $\rho_{D_W^{\mathcal{V}}}^{Q} \colon \fontact Q \to \fontact D_W^{\mathcal{V}}$ as the constant map to the single point in $\fontact D_W^{\mathcal{V}}$. 
		
		\textbf{Finite complexity:} 
	First consider a nesting chain of the form $D_{W}^{\mathcal{V}_1} \propnest D_{W}^{\mathcal{V}_2} \propnest \dots \propnest D_{W}^{\mathcal{V}_n}$.  
     
     \begin{claimappend} \label{claim:finite_complexity_common_base_case}
     The length of $D_{W}^{\mathcal{V}_1} \propnest D_{W}^{\mathcal{V}_2} \propnest \dots \propnest D_{W}^{\mathcal{V}_n}$ is bounded above by $E^2+E$.
     \end{claimappend}
     
     \begin{proof}
     For each $V \in \bigcup_{i=1}^{n} \mathcal{V}_i$, we have $V \nest W$ and hence $V \not \perp W$.  As $D^{\mathcal{V}_{i-1}}_{W} \propnest D^{\mathcal{V}_{i}}_{W}$ for each $i\in \{2,\dots,n\}$,  every element of $\mathcal{V}_{i}$ must therefore be nested into an element of $\mathcal{V}_{i-1}$. 
     Denote the elements of $\mathcal{V}_{i}$ by $V^{i}_{1}, \dots V^{i}_{k_{i}}$. Since each $\mathcal{V}_i$ is a pairwise orthogonal subset of $\mathfrak{S}$, we have $k_i \leq E$ for each $i \in \{1,\dots, n\}$ by the bounded pairwise orthogonality axiom of an almost HHS (Definition \ref{almostHHS}). 
    We define a \emph{$\mathcal{V}$--nesting chain} to be a maximal chain of the form $V_{j_{m}}^{m} \nest V_{j_{m-1}}^{m-1} \nest \dots \nest V_{j_{1}}^{1}$ for some $m \in \{1,\dots, n\}$ and $j_{i} \in \{1,\dots,k_i\}$, with $i \in \{1,\dots,m\}$. 
    Since the elements of $\mathcal{V}_i$ are pairwise orthogonal for each $i \in \{1,\dots,n\}$, if $V^m_{j_m}$ is the $\nest$--minimal element of a $\mathcal{V}$--nesting chain, then $V^m_{j_m}$ is nested into exactly one element of $\mathcal{V}_i$ for each $i \leq m$. 
    This implies that each $\mathcal{V}$--nesting chain is determined by its $\nest$--minimal element. Further, the set of $\nest$--minimal  elements of $\mathcal{V}$--nesting chains is pairwise orthogonal. By the bounded pairwise orthogonality axiom of an almost HHS, this implies there exist at most $E$ $\mathcal{V}$--nesting chains.

    In order for $ D_W^{\mathcal{V}_i} \neq D_W^{\mathcal{V}_{i+1}}$, either $k_{i+1} < k_{i}$ or there exists $j_{i} \in \{1, \dots, k_i\}$, $j_{i+1} \in \{1, \dots, k_{i+1}\}$ such that $V_{j_{i+1}}^{i+1} \propnest V_{j_{i}}^{i}$.  Thus, every step up the chain $D_{W}^{\mathcal{V}_1} \propnest D_{W}^{\mathcal{V}_2} \propnest \dots \propnest D_{W}^{\mathcal{V}_n}$ results in either a strict decrease in $k_i$ (the cardinality of $\mathcal{V}_{i}$)  to $k_{i+1}$ (the cardinality of $\mathcal{V}_{i+1}$) or a strict step down one of the $\mathcal{V}$--nesting  chains. Note that $k_{i}$ may increase when we encounter a strict step down one of the $\mathcal{V}$--nesting chains, since multiple elements of $\mathcal{V}_{i+1}$ may be nested into the same element of $\mathcal{V}_{i}$. Such an increase in $k_{i}$ corresponds to the nesting chain branching into multiple chains, which may only happen at most $E-k_{1}$ times, as there are at most $E$ $\mathcal{V}$--nesting chains.
     Hence, the length of $D_{W}^{\mathcal{V}_1} \propnest D_{W}^{\mathcal{V}_2} \propnest \dots \propnest D_{W}^{\mathcal{V}_n}$ is bounded by $k_{1}+(E-k_{1})=E$ plus the total number of times a strict decrease can occur across all of the $\mathcal{V}$--nesting chains.
  
    Each $\mathcal{V}$--nesting chain $V_{j_{m}}^{m} \nest V_{j_{m-1}}^{m-1} \nest \dots \nest V_{j_{1}}^{1}$ contains at most $E$ distinct elements of $\mathfrak{S}$ by the finite complexity of $\mathfrak{S}$. Bounded pairwise orthogonality implies there are at most $E$ different $\mathcal{V}$--nesting chains, thus the number of steps of the chain $D_{W}^{\mathcal{V}_1} \propnest D_{W}^{\mathcal{V}_2} \propnest \dots \propnest D_{W}^{\mathcal{V}_n}$ where there is a strict decrease within one of the $\mathcal{V}$--nesting chains is at most $E^{2}$. This bounds the length of $D_{W}^{\mathcal{V}_1} \propnest D_{W}^{\mathcal{V}_2} \propnest \dots \propnest D_{W}^{\mathcal{V}_n}$ by $E^2 + E$.
    \end{proof}
     
    We now consider a nesting chain of the form $D_{W_1}^{\mathcal{V}_1} \propnest D_{W_2}^{\mathcal{V}_2} \propnest \dots \propnest D_{W_n}^{\mathcal{V}_n}$. In this case, $W_1 \nest W_2 \nest \dots \nest W_n$, but not all of these nestings must be proper.
     Let $1 = i_1 < i_2 < \dots < i_k$ be the minimal subset of $\{1,\dots,n\}$ such that if $ i_j \leq i < i_{j+1}$, then $W_{i_j} = W_i$. Thus $W_{i_1} \propnest W_{i_2} \propnest \dots \propnest W_{i_k}$, and $k \leq E$ by finite complexity of $\mathfrak{S}$.  Claim \ref{claim:finite_complexity_common_base_case} established that $|i_j - i_{j+1}| \leq E^2 + E
     $, so $n \leq k(E^2+E) \leq E^{3}+E^{2}$, that is, any $\propnest$--chain of elements of $\mathfrak{D}$ has length at most $E^3+E^{2}$.

     Finally, since any $\propnest$--chain of elements of $\mathfrak{R}$ can be partitioned into a $\propnest$--chain of elements of $\mathfrak{D}$ and a $\propnest$--chain of elements of $\mathfrak{S}$, any  $\propnest$--chain in $\mathfrak{R}$ has length at most $E^3 +E^{2}+E$.

		\textbf{Orthogonality:} Two elements $D_W^\mathcal{V}, D_T^\mathcal{R} \in \mathfrak{D}$ are orthogonal if $W \perp T$ in $\mathfrak{S}$.  Let  $Q \in \mathfrak{S}$ and  $D_W^\mathcal{V} \in \mathfrak{D}$. Define $Q \perp D_{W}^\mathcal{V}$ if, in $\mathfrak{S}$, either $W \perp Q$  or $Q \nest V$ for some $V \in \mathcal{V}$. These definitions, plus the definition of nesting imply for all $W,V,Q \in \mathfrak{R}$, if $W \perp V$ and $Q \nest V$, then $W \perp V$. We now verify that $\mathfrak{R}$  satisfies the container requirements of the orthogonality axiom.
		
		Let $W,V \in \mathfrak{S}$ with $V \propnest W$ and $\{Q \in \mathfrak{R}_W: Q \perp V\} \neq \emptyset$, i.e.,  $(W,\{V\})$ is a container pair. In this case, the container of $V$ in $W$ for $\mathfrak{R}$ is $D_W^{\{V\}}$. 
		We now show containers exist for situations involving elements of $\mathfrak{D}$. We split this into three subcases.

		\textbf{Case 1: $\mathbf{D_W^\mathcal{V} \in \mathfrak{D}}$ and $\mathbf{Q \in \mathfrak{S}}$ with $\mathbf{D_W^\mathcal{V} \nest Q}$.}  Since $(W,\mathcal{V})$ is a container pair, there exists $P \in \mathfrak{S}$ with $P \nest W$ and $V \perp P$ for all $V \in\mathcal{V}$. Suppose that $D^{\mathcal{V}}_{W}$ requires a container in $Q$, that is, there is an element $U$ of $\mathfrak{R}$ that is orthogonal to $D^{\mathcal{V}}_{W}$ and nested in $Q$. We verify that $(Q,\{P\})$ is a container pair and $D_Q^{\{P\}}$ is a container of $D_W^\mathcal{V}$ in $Q$. 
		
		If $T \in \mathfrak{S}$ with $T \perp D_W^\mathcal{V}$ and $T \nest Q$, then  $T \perp W$ or $T \nest V$ for some $V \in \mathcal{V}$. 
		In either case, we have $T \perp P$, so $(Q,\{P\})$ is a container pair and $T \nest D_Q^{\{P\}}$. If $D_T^\mathcal{R} \in \mathfrak{D}$ with $D_T^\mathcal{R} \nest Q$ and $D_T^\mathcal{R} \perp D_W^\mathcal{V}$, then $T \perp W$ and $T \nest Q$. Since $P \nest W$, this implies $T \perp P$ and so $(Q,\{P\})$ is again a container pair, and $D_T^\mathcal{R} \nest D_Q^{\{P\}}$.

		\textbf{Case 2: $\mathbf{D_W^\mathcal{V}, D_T^\mathcal{R} \in \mathfrak{D}}$ where $\mathbf{D_W^\mathcal{V} \nest D_T^\mathcal{R}}$.} Since $(W,\mathcal{V})$ is a container pair, there exists $P \in \mathfrak{S}$ so that $P \nest W$ and $P \perp V$ for all $V \in \mathcal{V}$. Since $D_W^\mathcal{V} \nest D_T^\mathcal{R}$, it follows that for all $R \in \mathcal{R}$, either $R \perp W$  or  there exists $V \in \mathcal{V}$ so that $R\nest V$. 
		In both cases, $R \perp P$. Thus  $\mathcal{P} = \mathcal{R} \cup \{P\} $ is a pairwise orthogonal collection of elements of $\mathfrak{S}$. Suppose that $D^{\mathcal{V}}_{W}$ requires a container in $D^{\mathcal{R}}_{T}$, that is, there is an element $U$ of $\mathfrak{R}$ that is orthogonal to $D^{\mathcal{V}}_{W}$ and nested in $D^{\mathcal{R}}_{T}$. We verify that $(T,\mathcal{P})$ is a container pair and $D_T^{\mathcal{P}} \propnest D_T^\mathcal{R}$  is a container for $D_W^\mathcal{V}$ in $D_T^\mathcal{R}$.

		If $Q \in \mathfrak{S}$ satisfies $Q \nest D_T^\mathcal{R}$ and $D_W^\mathcal{V} \perp Q$, then  either $Q \perp W$ or $Q \nest V$ for some $V \in \mathcal{V}$.
		In both cases, $Q \perp P$. Further, we must have  $Q \nest T$ and $Q \perp R$ for each $R \in \mathcal{R}$ as $Q \nest D_T^{\mathcal{R}}$. Thus $(T,\mathcal{P})$ is a container pair and $Q \nest D_T^{\mathcal{P}}$.  On the other hand, if $D_Q^\mathcal{Z} \in \mathfrak{D}$ satisfies $D_Q^\mathcal{Z} \perp D_W^\mathcal{V}$ and $D_Q^\mathcal{Z} \nest D_T^\mathcal{R}$, then $Q \perp W$, $Q \nest T$, and for each $R \in \mathcal{R}$ either $R \perp Q$ or there exists $Z \in \mathcal{Z}$ with $R \nest Z$. Since $(Q,\mathcal{Z})$ is a container pair, there exists $U \in \mathfrak{S}$ such that $U \nest Q$ and $U \bot Z$ for all $Z \in \mathcal{Z}$. Since $Q \perp W$, we also have $U \perp P$ as $U \nest Q$ and $P \nest W$.  For each $R \in \mathcal{R}$, either $R \perp Q$ or there exists $Z \in \mathcal{Z}$ with $R \nest Z$. In both cases, $R \perp U$. Thus, $U$ is orthogonal to all elements of $\mathcal{P}= \mathcal{R} \cup \{P\}$ and moreover $U \nest Q \nest T$, so $(T,\mathcal{P})$ is a container pair. Furthermore, $D_Q^\mathcal{Z} \nest D_T^{\mathcal{P}} = D_T^{\mathcal{R} \cup \{P\}}$ since $D_Q^\mathcal{Z} \nest D_T^{\mathcal{R}}$ and $P \bot Q$. We have therefore shown that $D_T^{\mathcal{P}}$  is a container for $D_W^\mathcal{V}$ in $D_T^\mathcal{R}$.
		
		\textbf{Case 3: $\mathbf{D_T^\mathcal{R} \in \mathfrak{D}}$  and $\mathbf{Q \in \mathfrak{S}}$ with $\mathbf{Q \nest D_T^\mathcal{R}}$.} This implies $\mathcal{Q} = \mathcal{R} \cup \{Q\}$ is a pairwise orthogonal set of elements of $\mathfrak{S}$. Further, suppose that $Q$ requires a container in $D^{\mathcal{R}}_{T}$, that is, there is an element of $\mathfrak{R}$ that is orthogonal to $Q$ and nested in $D^{\mathcal{R}}_{T}$.
		We verify that $(T,\mathcal{Q})$ is a container pair and  $D_T^\mathcal{Q}$ is a  container for $Q$ in $D_T^\mathcal{R}$. 
		
		Suppose there exists $V \in \mathfrak{S}$ with  $V \nest D_T^{\mathcal{R}}$ and $V\perp Q$. Then $V \nest T$ and $V$ is orthogonal to all the elements of $\mathcal{R}\cup \{Q\}$. 
		Thus $(T,\mathcal{Q})$ is a container pair, so $D_T^\mathcal{Q}$ exists and $V \nest D_T^{\mathcal{Q}}$. Now suppose there exists  $D_W^\mathcal{V} \nest D_T^{\mathcal{R}}$ such that $D_W^{\mathcal{V}} \perp Q$.
		Since $(W,\mathcal{V})$ is a container pair, there exists $U \in \mathfrak{S}$ with $U \nest W$ and $U$ orthogonal to each element of $\mathcal{V}$. As $D_W^\mathcal{V} \nest D_T^{\mathcal{R}}$, for each $R \in \mathcal{R}$ either $R \perp W$ or there exists $V \in \mathcal{V}$ such that $R \nest V$. In both cases, $R \perp U$. Further, as $Q \perp D_W^{\mathcal{V}}$, we have $Q \perp W$ or $Q \nest V$ for some $V \in \mathcal{V}$. In both cases, $Q \perp U$. Therefore $U$ is orthogonal to every element of $\mathcal{Q}$, and moreover $U \nest W \nest T$ since $D^{\mathcal{V}}_{W} \nest D^{\mathcal{R}}_{T}$. Thus $(T,\mathcal{Q})$ is a container pair and $U \nest D^{\mathcal{Q}}_{T}$. 
		Now, for each $R \in \mathcal{R}$, either $R \perp W$ or $R \nest V$ for some $V \in \mathcal{V}$. Since $\mathcal{Q} = \mathcal{R} \cup \{Q\}$ and $Q \bot W$, this implies $D_W^\mathcal{V} \nest D_T^\mathcal{Q}$. 
		Thus, $(T,\mathcal{Q})$ is a container pair and  $D_T^\mathcal{Q}$ is a  container for $Q$ in $D_T^\mathcal{R}$.

		\textbf{Transversality:} An element of $\mathfrak{R}$ is transverse to an element of $\mathfrak{D}$ whenever it is not nested or orthogonal.  Since the hyperbolic spaces associated to elements of $\mathfrak{D}$ are points, we only need to define the relative projections  from an element of $\mathfrak{D}$ to an element of $\mathfrak{S}$.
		Let $D_W^\mathcal{V} \in \mathfrak{D}$ and $Q \in \mathfrak{S}$ and suppose $D_W^\mathcal{V} \trans Q$. This implies $W \not \perp Q$ and $ W \not \nest Q$. We define $\rho_Q^{D_W^\mathcal{V}}$ based on the $\mathfrak{S}$--relation between $Q$ and the elements of $\mathcal{V}$.
		
		\begin{itemize}
			\item If $Q \perp V$ for all $V \in \mathcal{V}$, then $Q \not \nest W$ as $Q \nest W$ would imply $Q \nest D_W^\mathcal{V}$. Thus we must have $Q \trans W$, so we define $\rho_Q^{D_W^{\mathcal{V}}} = \rho_Q^W$. 
			\item  If $V \trans Q$ or $V \propnest Q$ for some $V \in \mathcal{V}$, then $\rho_Q^V$ exists and we define $\rho_Q^{D^\mathcal{V}_W}$ to be the union of all the $\rho_Q^V$ for $V \in\mathcal{V}$ with $V \trans Q$ or $V \propnest Q$. Lemma \ref{lem:orthongality_and_rel_projections} ensures $\rho_Q^{D_W^\mathcal{V}}$ has diameter at most $4E$ in this case.
			\item  If $Q \nest V$ for some $V$, then $Q \perp D_W^\mathcal{V}$ which contradicts $Q \trans D_W^\mathcal{V}$, so this case does not occur. 
		\end{itemize}

	\textbf{Consistency:}
	Since the only elements of $\mathfrak{R}$ whose associated spaces are not points are in $\mathfrak{S}$, the first two inequalities in the consistency axiom for  $(\mathcal{X},\mathfrak{S})$ imply the same two inequalities for $(\mathcal{X},\mathfrak{R})$. To verify the final clause of the consistency axiom, we need to check that if $Q ,R, T \in \mathfrak{R}$ such that $Q \propnest R$  with $\rho_T^R$ and $\rho_T^Q$ both defined, then  $\dist_T( \rho_T^Q, \rho_T^R)$ is uniformly bounded in terms of $E$.
	We can assume $T \in \mathfrak{S}$ as $\fontact{T}$ has diameter zero otherwise. We can further assume at least one of $Q$ and $R$ is an element of $\mathfrak{D}$, as we already have the consistency axiom for elements of $\mathfrak{S}$.
	
	\textbf{Case 1:} $\mathbf{Q \propnest}$ $\mathbf{R \propnest T.}$
	\begin{itemize}
		\item Assume $Q \in \mathfrak{S}$ and $R = D_W^\mathcal{V} \in \mathfrak{D}$. Fix $V \in \mathcal{V}$. Since $D_W^\mathcal{V}=R \nest T$ and $\rho_T^{D_W^\mathcal{V}} = \bigcup_{U \in \mathcal{V}}\rho_T^U$, we have $\rho_T^V \subseteq \rho_T^{D_W^\mathcal{V}} = \rho_T^R$. Since $V \perp Q$, Lemma \ref{lem:orthongality_and_rel_projections}  says $\dist_T(\rho_T^R,\rho_T^Q)\leq \dist_T(\rho_T^V,\rho_T^Q) \leq 2E$.

		\item Assume $Q = D_W^\mathcal{V} \in \mathfrak{D}$ and $R \in \mathfrak{S}$. Fix $V \in \mathcal{V}$. In this case, $\rho_T^V \subseteq \rho_T^Q$ since $D_W^\mathcal{V}=Q \propnest T$. Since $D_W^\mathcal{V} =Q \propnest R$, we have $V \propnest W \propnest R$. Thus, the consistency axiom for $\mathfrak{S}$ says $\dist_T(\rho_T^Q,\rho_T^R) \leq \dist_T(\rho_T^V,\rho_T^R) \leq E$.
		
		\item Assume $Q = D_W^\mathcal{V} \in \mathfrak{D}$ and $R = D_{W'}^{\mathcal{V}'}  \in \mathfrak{D}$. Thus $W \nest W' \propnest T$ and consistency in $\mathfrak{S}$ implies $ \dist_T(\rho_T^W,\rho_T^{W'}) \leq E$. Fix $V \in \mathcal{V}$ and $V' \in \mathcal{V}'$. Consistency in $\mathfrak{S}$ also implies $\dist_T(\rho_T^V,\rho_T^W) \leq E$ and  $\dist_T(\rho_T^{V'},\rho_T^{W'}) \leq E$. Since $\rho_T^V \subseteq \rho_T^Q$ and $\rho_T^{V'} \subseteq \rho_T^R$, we have $\dist_T(\rho_T^Q,\rho_T^R) \leq \dist_T(\rho_T^V,\rho_T^{V'}) \leq \dist_T(\rho_T^V,\rho_T^W)+\diam(\rho_T^W)+\dist_T(\rho_T^W,\rho_T^{W'})+\diam(\rho_T^{W'})+\dist_T(\rho_T^{W'},\rho_T^{V'}) \leq 5E$. 
	\end{itemize}
	
	\textbf{Case 2:} $\mathbf{Q \propnest R}$\textbf{,} $\mathbf{R \trans T}$\textbf{, and} $\mathbf{Q \not\perp T.}$ In this case we have either $Q \trans T$ or $Q \propnest T$.
	\begin{itemize}
		\item Assume $Q \in \mathfrak{S}$ and $R = D_W^\mathcal{V} \in \mathfrak{D}$. Since $D_W^\mathcal{V} = R$ is transverse to $T$ we cannot have $T \nest V$ for any $V \in \mathcal{V}$ (this would imply $D_W^\mathcal{V} \perp T$). If $V \perp T$ for all $V \in \mathcal{V}$, then $W \trans T$ (as shown in the proof of transversality) and $\rho_T^R = \rho_T^{D_W^{\mathcal{V}}} = \rho_T^W$. Since $Q \nest R = D_W^\mathcal{V}$, we have $Q \nest W$ and consistency in $\mathfrak{S}$ implies $\dist_T(\rho_T^Q,\rho_T^R) = \dist_T(\rho_T^Q,\rho_T^W) \leq E$. If instead there exists $V\in\mathcal{V}$ so that $V \trans T$ or $V \propnest T$, then $\rho_T^V \subseteq \rho_T^{D_W^\mathcal{V}} = \rho_T^R$. Since $Q \nest R = D_W^\mathcal{V}$,  $Q \perp V$ and Lemma \ref{lem:orthongality_and_rel_projections} gives $\dist_T(\rho_T^Q,\rho_T^R) \leq \dist_T(\rho_T^Q,\rho_T^V) \leq 2E$.

		\item Assume $Q = D_W^\mathcal{V} \in \mathfrak{D}$ and $R \in \mathfrak{S}$. As before, $T \not\nest V$ for all $V \in \mathcal{V}$. First assume there exists $V \in \mathcal{V}$ so that $V \trans T$ or $V \propnest T$. This occurs when either $D_W^\mathcal{V} =Q \propnest T$ or $Q \trans T$ and not every element of $\mathcal{V}$ is orthogonal to $T$. In both cases, $\rho_T^V \subseteq \rho_T^{D_W^\mathcal{V}} = \rho_T^Q$ and consistency in $\mathfrak{S}$ implies $\dist_T(\rho_T^Q,\rho_T^R) \leq \dist_T(\rho_T^V,\rho_T^R) \leq 2E$ because   $V \nest W \propnest R$. Now assume $T \perp V$ for all $V \in \mathcal{V}$. This can only occur when $D_W^\mathcal{V}=Q$ is transverse to $T$. In this case, $W \trans T$ and  $\rho_T^Q = \rho_T^{D_W^\mathcal{V}} = \rho_T^W$. Since $W \nest R$, consistency in $\mathfrak{S}$ implies  $\dist_T(\rho_T^R,\rho_T^Q) = \dist_T(\rho_T^R,\rho_T^W) \leq E$. 
		
		\item Assume $Q = D_W^\mathcal{V}\in \mathfrak{D}$ and $R = D_{W'}^{\mathcal{V}'}  \in \mathfrak{D}$. As before, $T \not\nest V$ for all $V \in \mathcal{V} \cup \mathcal{V}'$. If $\rho_T^R = \rho_T^{W'}$, then we have the first case of transversality, that is, $W' \trans T$ and $V' \perp T$ for all $V' \in \mathcal{V}$. 
		Thus, if $\rho_T^R =\rho_T^{W'}$, then the result reduces to the previous  bullet, replacing $R$ with $W'$. We can therefore assume  $\rho_T^R \neq \rho_T^{W'}$, meaning we have the second case of transversality where there exists $V' \in \mathcal{V}'$ so that $V'$ is either transverse to or properly nested into $T$. 
		
		Suppose $\rho^{Q}_{T} \neq \rho^{W}_{T}$ too. This implies there also exists $V \in \mathcal{V}$  so that  $V$ is either transverse to or properly nested into $T$. 
		Furthermore, $\rho_T^V \subseteq \rho_T^Q$ and $\rho_T^{V'} \subseteq \rho_T^{R}$. Now,  $D_W^{\mathcal{V}} \nest D_{W'}^{\mathcal{V}'}$ implies $V' \perp W$ or $V'$ is nested into an element of $\mathcal{V}$. 
		If $V' \perp W$, then $V \perp V'$ and Lemma \ref{lem:orthongality_and_rel_projections} implies $\dist_T(\rho_T^Q,\rho_T^R) \leq \dist_T(\rho_T^V,\rho_T^{V'})\leq 2E$. If $V'$ is nested into an element of $\mathcal{V}$, then either $V' \nest V$ or $V' \perp V$ since $\mathcal{V}$ is a pairwise orthogonal subset of $\mathfrak{S}$. 
		By applying consistency in $\mathfrak{S}$ when $V' \nest V$ or Lemma \ref{lem:orthongality_and_rel_projections} when $V' \perp V$, we have  $\dist_T(\rho_T^Q,\rho_T^R) \leq \dist_T(\rho_T^V,\rho_T^{V'} )\leq 2E$.
		
		Now suppose $\rho^{Q}_{T} = \rho^{W}_{T}$. Then $D_W^{\mathcal{V}} \nest D_{W'}^{\mathcal{V}'}$ implies $V' \perp W$ or $V'$ is nested into $W$. Applying Lemma \ref{lem:orthongality_and_rel_projections} if $V' \bot W$, or consistency in $\mathfrak{S}$ if $V' \nest W$, we again obtain $\dist_T(\rho_T^Q,\rho_T^R) \leq \dist_{T}(\rho^{W}_{T},\rho^{V'}_{T}) \leq 2E$.
	\end{itemize}

	\textbf{Uniqueness, bounded geodesic image, large links:} Since the only elements of $\mathfrak{R}$ whose associated spaces are not points are in $\mathfrak{S}$, these axioms for  $(\mathcal{X},\mathfrak{R})$ follow from the fact that they hold in $(\mathcal{X},\mathfrak{S})$.

	\textbf{Partial realization:}   Let $T_1, \dots , T_n$ be pairwise orthogonal elements of $\mathfrak{R}$, and let $p_i \in \fontact{T_i}$ for each $i \in \{1,\dots,n\}$. Without loss of generality,  assume $T_1,\dots,T_k \in \mathfrak{S}$ and $T_{k+1},\dots,T_n \in \mathfrak{D}$ where $k \in \{0,\dots,n\}$. If $k=0$ (resp. $k=n$), then each $T_i \in \mathfrak{D}$ (resp. $T_i \in \mathfrak{S}$).

	For  $i \in \{k+1,\dots, n\}$, let $T_i = D_{W_i}^{\mathcal{V}_i}$ and let $q_i$ be any point in $\rho_{W_i}^{D_{W_i}^{\mathcal{V}_i}} \subseteq  \fontact{W_i}$.  Since $T_1,\dots,T_n$ are pairwise orthogonal, it follows that $W_{k+1},\dots,W_n$ are pairwise orthogonal too, and for each $j \in \{1,\dots,k\}$, $T_{j}$ is either nested into an element of some $\mathcal{V}_{i_{j}}$ or orthogonal to all $W_{k+1},\dots,W_{n}$. Without loss of generality, assume that $T_{1},\dots,T_l$ are nested into elements of $\mathcal{V}_{m+1}\cup\dots\cup\mathcal{V}_{n}$ and $T_{l+1},\dots,T_k,W_{k+1},\dots,W_{n}$ are pairwise orthogonal, where $l \leq k$, $m \leq n$, and $n-m \leq l$. If $l = 0$, then $n=m$ and each $T_j$ is orthogonal to every $W_i$.
	Otherwise, for each $j \in \{1,\dots,l\}$, $T_j$ is nested in some $W_{i}$ for $i \in \{m+1,\dots,n\}$. In both cases, $T_{1},\dots,T_{k},W_{k+1},\dots,W_{m}$ are pairwise orthogonal elements of $\mathfrak{S}$. We can therefore use the partial realization axiom in $\mathfrak{S}$ on the points  $p_1,\dots,p_k,q_{k+1},\dots,q_m$ to produce a point $x \in \mathcal{X}$ with the following properties:
	
	\begin{enumerate}
		\item $\dist_{T_i}(x,{p_i}) \leq E$ for $i \in \{1,\dots,k\}$; \label{item:partial_realization_1}
		\item $\dist_{W_i}(x,{q_i}) \leq E$ for $i \in \{k+1,\dots,{m}\}$; \label{item:partial_realization_2}
		\item  for all $i \in \{1,\dots,k\}$ if $Q \trans {T_i}$ or ${T_i} \propnest Q$, then $\dist_Q(x,\rho_Q^{T_i}) \leq E$; \label{item:partial_realization_3}
		\item for all $i \in \{k+1,\dots,{m}\}$ if $Q \trans {W_i}$ or ${W_i} \propnest Q$, then $\dist_Q(x,\rho_Q^{W_i}) \leq E$. \label{item:partial_realization_4}
	\end{enumerate}
	
	Now, for $Q \in \mathfrak{S}$, define $b_Q \in \fontact{Q}$ as follows.  Let $\mathcal{V} = \bigcup_{{i=k+1}}^{n} \mathcal{V}_i$ and $\mathcal{V}_Q = \{V \in \mathcal{V} : V \trans Q \text{ or } V\propnest Q\}$. If  $\mathcal{V}_{Q} \neq \emptyset$, then define $b_Q$ to be any point in $\bigcup_{V \in \mathcal{V}_{Q}} \rho_Q^V$. Since $\mathcal{V}$ is a collection of pairwise orthogonal elements of $\mathfrak{S}$, the diameter of $\bigcup_{V \in \mathcal{V}_Q} \rho_Q^V$ is at most $2E$ by Lemma \ref{lem:orthongality_and_rel_projections}. If either $Q \nest V$ for some $V \in \mathcal{V}$ or $Q \perp V$ for all $V \in\mathcal{V}$ then define $b_Q = \pi_Q(x)$. Since $\mathcal{V}$ is a collection of pairwise orthogonal elements of $\mathfrak{S}$, these two cases encompass all elements of $\mathfrak{S}$.
	
	\begin{claimappend}
		The tuple  $(b_Q)_{Q\in\mathfrak{S}}$ is  $3E$--consistent.
	\end{claimappend}
	
	\begin{proof}
		Let $R,Z \in \mathfrak{S}$. Recall that if $b_Z =\pi_Z(x)$ and $b_R = \pi_R(x)$, then the $E$--consistency inequalities for $b_R$ and $b_Z$ are satisfied by the consitency axiom of $(\mathcal{X}, \mathfrak{S})$. Thus we can assume that there exists  $V \in \mathcal{V}$ so that either $V \propnest Z$ or $V \trans Z$. Fix $V \in \mathcal{V}$ so that $b_Z \in \rho_Z^V$. We need to verify the consistency inequalities when $R \trans Z$, $R \propnest Z$, and $Z \propnest R$.
		
		\textbf{Consistency when} $\mathbf{R\trans Z}$\textbf{:} Assume $R \trans Z$. If $R \perp V$, $V \nest R$, or $R \nest V$ then either Lemma \ref{lem:orthongality_and_rel_projections} or consistency in $\mathfrak{S}$ implies $\dist_Z(\rho_Z^{V},\rho_Z^R) \leq 2E$. Since $b_Z \in \rho_Z^{V}$, we have $\dist_Z(b_Z,\rho_Z^R) \leq 3E$. Now suppose $R \trans V$ so that $\mathcal{V}_{R}$ is non-empty. In this case, $b_R \in \bigcup_{U \in \mathcal{V}_{R}} \rho_R^U$ and so $b_{R}$ is within $2E$ of $\rho_R^V$. Now, if $\dist_Z(b_Z,\rho_Z^R) > 3E$, then $\dist_Z(\rho_Z^{V},\rho_Z^R) > 2E$. Thus $\rho$--consistency (Lemma \ref{lem:rho_consistency}) implies $\dist_R(\rho_R^V,\rho_R^Z) \leq E$. It follows that $\dist_R(b_R, \rho_R^Z) \leq 3E$ by the triangle inequality.
		
		\textbf{Consistency when} $\mathbf{R \propnest Z}$\textbf{:} Assume $R \propnest Z$. As before, if $R \perp V$, $V \nest R$, or $R \nest V$ then $\dist_Z(\rho_Z^{V},\rho_Z^R) \leq 2E$ and we have $\dist_Z(b_Z,\rho_Z^R) \leq 3E$. Thus, we can assume $R \trans V$ so that $b_R$ is within $2E$ of $\rho_R^V$. Now, if $\dist_Z(b_Z, \rho_Z^R) >3E$, then $\dist_Z(\rho_Z^V,\rho_Z^R) > 2E$, and $\rho$--consistency implies $\diam(\rho_R^V \cup \rho_R^Z(\rho_Z^V)) \leq E$. However, this implies $\diam(b_R \cup \rho_R^Z(b_Z)) \leq 3E$ since $b_Z \in \rho_Z^V$ and $\dist_R(b_R,\rho_R^V) \leq 2E$.

		\textbf{Consistency when} $\mathbf{Z\propnest R}$\textbf{:} Assume $Z \propnest R$. If $R$ is orthogonal to all elements of $\mathcal{V}$, then $R \perp V$ implies $V\perp Z$ which contradicts the assumption that $V \propnest Z$ or $V \trans Z$. On the other hand, if there  exists $V' \in \mathcal{V}$ so that $R \nest V'$, then either $R \perp V$ (if $V' \perp V$) or $R \nest V$ (if $V'=V$). But this implies either $V \perp Z$ or $Z \propnest V$, both of which give a contradiction if $V \trans Z$ or $V \propnest Z$. There must therefore be an element of $\mathcal{V}$ that is either properly nested in or transverse to $R$, and we can repeat the same argument as in the previous case, switching the roles of $R$ and $Z$.
	\end{proof}
	
	Let $y\in \mathcal{X}$ be the point produced by applying the realization theorem (Theorem \ref{thm:realization}) in $\mathfrak{S}$ to the tuple $(b_Q)$. We claim $y$ is a partial realization point for $p_1,\dots,p_n$ in $\mathfrak{R}$.  Since $\fontact{D_{W_i}^{\mathcal{V}_i}}$ is a single point, $y$ satisfies the first requirement of the partial realization axiom in $\mathfrak{R}$ for $p_{k+1},\dots,p_n$.  For $i \leq k$, $T_i$ is either nested into an element of $\mathcal{V}_{m+1}\cup\dots\cup\mathcal{V}_{n}$ or orthogonal to all
	$W_{k+1},\dots,W_{n}$. 
	This implies $T_i$ is either nested into an element of $\mathcal{V}$ or orthogonal to all elements of $\mathcal{V}$. In both cases $b_{T_{i}} = \pi_{T_{i}}(x)$, and we have that $\pi_{T_i}(y)$ is uniformly close to $\pi_{T_i}(x)$, which is in turn $E$--close to $p_i$.
	
	Now, let $Q \in \mathfrak{S}$ with $Q \trans T_i$ or $T_i \propnest Q$ for some $i \in\{1,\dots,n\}$. We verify $\dist_Q(y,\rho_Q^{T_i})$ is uniformly bounded when $i \leq k$ and $i > k$ separately.

	Assume $i \leq k$, so that $T_i \in \mathfrak{S}$. If {$i \leq k$} and $b_Q = \pi_Q(x)$, then $\dist_Q(y,\rho_Q^{T_i})$ is bounded by item (\ref{item:partial_realization_3}). If {$i \leq k$} and $b_Q \neq \pi_Q(x)$, then $b_{Q} \in \rho^{V}_{Q}$ for some $V \in \mathcal{V}$ and $T_i$ is either orthogonal to or nested into $V$. If $T_{i} \bot V$ then $\dist_Q(b_Q,\rho_Q^{T_i}) \leq 3E$ by Lemma \ref{lem:orthongality_and_rel_projections}. If $T_{i} \propnest V$ then $\dist_{Q}(b_{Q},\rho^{T_{i}}_{Q}) \leq 2E$ by consistency. The result then follows from the triangle inequality since $\pi_Q(y)$ is uniformly close to $b_Q$.
	
	Now assume $i > k$, so that $T_i = D_{W_i}^{\mathcal{V}_i} \in \mathfrak{D}$. If $D_{W_i}^{\mathcal{V}_i} \propnest Q$, then $\rho_Q^{V} \subseteq \rho_Q^{D_{W_i}^{\mathcal{V}_i}}$ for all $V \in\mathcal{V}_i$. 
	Since $b_Q$ is within $2E$ of any $\rho_Q^V$ for $V \in \mathcal{V}_i$, this bounds $\dist_Q(y,\rho_Q^{D_{W_i}^{\mathcal{V}_i}})$ uniformly. On the other hand, if $ D_{W_i}^{\mathcal{V}_i} \trans Q$, then either $Q \perp V$ for all $V \in \mathcal{V}_i$ or  there exists $V \in\mathcal{V}_i$ so that $V \trans Q$ or $V \propnest Q$.  In the latter case, $\rho_Q^V \subseteq \rho_Q^{D_{W_i}^{\mathcal{V}_i}}$ and we are finished since $b_Q$ is within $2E$ of $\rho_Q^V$, giving a uniform bound on the distance from $\pi_Q(y)$ to $\rho_Q^{D_{W_i}^{\mathcal{V}_i}}$. In  the former case, we must have $W_i \trans Q$ and  $\rho_Q^{D_{W_i}^{\mathcal{V}_i}}$ is equal to $\rho_Q^{W_i}$. If $b_Q = \pi_Q(x)$ than we are done by item (\ref{item:partial_realization_4}).
	Otherwise, there exists  $V' \in \mathcal{V} - \mathcal{V}_i$ so that $V' \trans Q$ or $V' \propnest Q$ and $b_Q \in \rho_Q^{V'}$. Since $V' \perp W_i$, it follows that $\rho_Q^{V'}$ is within $2E$ of $\rho_Q^{W_i}$. Thus $b_Q$, and hence $\pi_Q(y)$, is uniformly close to $\rho_Q^{W_i}=\rho_Q^{D_{W_i}^{\mathcal{V}_i}}$.
	This concludes the proof of Theorem \ref{thm:almost_HHS_are_HHS}.
\end{proof}

\begin{remappend}\label{rem:almost_HHGs_are_HHSs}
	We say $G$ is an almost HHG if there exists an almost HHS $(\mathcal{X}, \mathfrak{S})$ such that $G$ and $(\mathcal{X},\mathfrak{S})$ satisfy the definition of a hierarchically hyperbolic group where `HHS' is replaced with `almost HHS'.
	 The above proof shows that if $(G,\mathfrak{S})$ is an almost HHG, then the structure $\mathfrak{R}$ from Theorem \ref{thm:almost_HHS_are_HHS} is an HHG structure for $G$.
\end{remappend}

 The following corollary gives criteria for the HHS structure from Theorem \ref{thm:almost_HHS_are_HHS} to have unbounded products. This is the version of Theorem \ref{thm:almost_HHS_are_HHS} that is applied in Theorem \ref{thm:betterHHSstructure} to prove that every hierarchically hyperbolic space with the bounded domain dichotomy admits an HHS structure with unbounded products.

\begin{corappend} \label{cor:unbddproducts}
  Let $(\mathcal{X},\mathfrak{T})$ be an almost HHS with the bounded domain dichotomy. If for every non-$\nest$--maximal domain $V \in \mathfrak{T}$, there exist $W,Q \in \mathfrak{T}$ so that $W\nest V$, $Q \perp V$, and $\diam(\fontact{W}) = \diam(\fontact{Q}) = \infty$, then the HHS structure $\mathfrak{R}$ obtained by applying Theorem \ref{thm:almost_HHS_are_HHS} to $\mathfrak{T}$ has unbounded products. 
\end{corappend}

\begin{proof}
Assume for every non-$\nest$--maximal domain $V \in \mathfrak{T}$, 
there exist $W,Q \in \mathfrak{T}$ so that $W\nest V$, $Q \perp V$ 
and $\diam(\fontact{W}) = \diam(\fontact{Q}) = \infty$. Let $\mathfrak{R}$ be the HHS structure obtained from $\mathfrak{T}$ using Theorem \ref{thm:almost_HHS_are_HHS}.  If $V \in \mathfrak{T}$ and $V$ is not $\nest$--maximal, then the above property implies that $\FU{V}$ and $\EU{V}$ are both infinite diameter. Thus, we need only verify unbounded products for elements of $\mathfrak{R} - \mathfrak{T}$. Using the notation of Theorem \ref{thm:almost_HHS_are_HHS}, let $D=D_W^\mathcal{V} \in \mathfrak{R} - \mathfrak{T}$ and assume $\diam(\FU{D}) = \infty$.  Now, $V \perp D_W^{\mathcal{V}}$ for all $V \in \mathcal{V}$, and by construction of $\mathfrak{T}$, there exists $Q \in \mathfrak{T}$ so that $Q \nest V$ and $\diam(\fontact{Q}) = \infty$. Since $Q \perp D_W^{\mathcal{V}}$, this implies  $\diam(\EU{D}) = \infty$. Therefore $(\mathcal{X},\mathfrak{R})$ is an HHS with unbounded products. 
\end{proof}

\bibliographystyle{alpha}
\bibliography{hhs_2015}
\end{document}